\newcommand{\Q}{\mathbb{Q}}
\newcommand{\Z}{\mathbb{Z}}
\newcommand{\ord}{\operatorname{ord}}
\newcommand{\rank}{\operatorname{rank}}
\newcommand{\sign}{\operatorname{sign}}
\theoremstyle{definition}
\theoremstyle{remark}
\theoremstyle{plain}
\newtheorem{lemma}{Lemma}[section]
\newtheorem{theorem}[lemma]{Theorem}
\newtheorem{corollary}[lemma]{Corollary}
\begin{document}

\author{Tapani Matala-aho}
\author{Louna Sepp\"al\"a}
\address{Matematiikka, PL 8000, 90014 Oulun yliopisto, Finland}
\email{tapani.matala-aho@oulu.fi}
\email{louna.seppala@student.oulu.fi}
\title{Hermite-Thue equation: Pad\'e approximations and Siegel's lemma}
\subjclass[2010]{11J81, 41A21, 11C20}
\keywords{Diophantine approximation, Pad\'e approximation, Siegel's lemma, Vandermonde-type determinant}
\date{\today}
\thanks{The published version of this article may be found at \url{https://doi.org/10.1016/j.jnt.2018.03.014}.}

\begin{abstract}
Pad\'e approximations and Siegel's lemma are widely used tools in Diophantine approximation theory.
This work has evolved from the attempts to improve Baker-type linear independence measures, either by using the Bombieri-Vaaler version of Siegel's lemma to sharpen the estimates of Pad\'e-type approximations, or by finding completely explicit expressions for the yet unknown 'twin type' Hermite-Pad\'e approximations. 
The appropriate homogeneous matrix equation representing both methods has an $M \times (L+1)$ coefficient matrix, where $M \le L$.
The homogeneous solution vectors of this matrix equation give candidates for the Pad\'e polynomials.    
Due to the Bombieri-Vaaler version of Siegel's lemma, the upper bound of the minimal non-zero solution of the matrix equation can be improved by finding the gcd of all the $M \times M$ minors of the coefficient matrix.
In this paper we consider the exponential function and prove that there indeed exists a big common factor of the $M \times M$ minors, giving a possibility to apply the Bombieri-Vaaler version of Siegel's lemma. 
Further, in the case $M=L$, the existence of this common factor is a step towards understanding the nature of the 'twin type' Hermite-Pad\'e approximations to the exponential function.
\end{abstract}

\maketitle

\section{Introduction}

\subsection{Hermite-Pad\'e approximations}

In 1873 Hermite \cite{Hermite} proved the transcendence of $e$, the base of the natural logarithm. For the proof, Hermite introduced
rational function approximations to the exponential function in the following sense:
Let $l_0,l_1, \ldots,  l_m$ be positive integers and let $\alpha_1, \ldots, \alpha_m$ be distinct complex numbers. 
Denote $\overline{\alpha} = (\alpha_1, \ldots, \alpha_m)^T$, $\overline{l} = (l_0, l_1, \ldots,  l_m)^T$, 
$L_0:=l_0+l_1+\ldots+l_m$, and $L:=l_1+\ldots+l_m$. 
Then there exist polynomials $A_{\overline{l},j}(t, \overline{\alpha}) \in \Q[t, \overline{\alpha}]$ and remainders $R_{\overline{l},j}(t,\overline\alpha)$ such that
\begin{equation}\label{Hermite}
A_{\overline{l}, 0}(t,\overline\alpha) e^{\alpha_j t}  - A_{\overline{l}, j}(t,\overline\alpha) = R_{\overline{l},j}(t,\overline\alpha),
\quad j=1,\ldots,m,
\end{equation}
where
\begin{equation*}
\begin{cases}
\deg_t A_{\overline{l},j}(t,\overline\alpha) \le L_0-l_j,\quad j=0,1,\ldots,m;\\
L_0+1\le \underset{t=0}{\ord} \, R_{\overline{l},j}(t,\overline\alpha) < \infty,\quad j=1,\ldots,m.
\end{cases}
\end{equation*}
Moreover, the coefficients of the polynomial $A_{\overline{l},0}(t,\overline\alpha)$ have explicit expressions in terms of 
the numbers $l_0,l_1, \ldots,  l_m$ and $\alpha_1, \ldots, \alpha_m$ (see \eqref{Anollapolynomi}).
Later these approximations were called \emph{Hermite-Pad\'e approximations}
or \emph{simultaneous Pad\'e approximations of the second type} or \emph{type II Pad\'e approximations}, briefly.

The question of finding explicit Pad\'e approximations to a given set of functions\\ $\{F_1(t), \ldots, F_m(t)\}$ is called a Pad\'e problem. 
Baker and Graves-Morris \cite {BakerGraves-Morris} cover the general setting; 
the problem of simultaneous Pad\'e approximations is stated in Chapter 8 of their book.
In case $l_0=l_1=\ldots=l_m=:l$ for $l\in \mathbb{Z}_{\ge 1}$, we use the term \emph{diagonal (Pad\'e approximations)}.
Diagonal Hermite-Pad\'e approximations of the generalised hypergeometric series are quite well established;
see de Bruin \cite{BRUIN}, Huttner \cite{Huttner}, Matala-aho \cite{MAT2011}, Nesterenko \cite{Nesterenko}, and Niki\v{s}in \cite{Nikisin} for more details.
In this work we shall not pursue further this general Pad\'e problem but focus on the Pad\'e approximations to the exponential function instead.

\subsection{The twin problem}\label{sec:twin}

The problem of finding explicit type II Hermite-Pad\'e approximations to the exponential series in the case where the degrees of 
the polynomials are free parameters and the orders of the remainders are equal was, as mentioned, resolved already by Hermite.
But the twin problem (as we call it) of finding explicit type II Hermite-Pad\'e approximations in the case where the degrees of the polynomials are 
the same but the orders of the remainders are free parameters is still open.

Let now $l_1, \ldots,  l_m$ be positive integers and let $\alpha_1, \ldots, \alpha_m$ be distinct variables. 
Denote $\overline{\alpha} = (\alpha_1, \ldots, \alpha_m)^T$, $\overline{l} = (l_1, \ldots,  l_m)^T$, and $L:=l_1+\ldots+l_m$. 
Then we may state the twin problem as follows:
Find an explicit polynomial $B_{\overline{l},0} (t,\overline\alpha)$, polynomials
$B_{\overline{l},j}(t, \overline{\alpha})$ and remainders $S_{\overline{l},j}(t,\overline\alpha)$, $j=1,\ldots,m$, satisfying the equations
\begin{equation}\label{TwinHermite}
B_{\overline{l},0} (t,\overline\alpha)  e^{\alpha_j t} - B_{\overline{l},j} (t,\overline\alpha)  =
S_{\overline{l},j} (t,\overline\alpha) , \quad j=1, \ldots, m,
\end{equation}
with
\begin{equation*}
\begin{cases}
\deg_t B_{\overline{l},j} (t,\overline\alpha)  \le L, &j=0, \ldots, m; \\
L+l_j +1 \le \underset{t=0}{\ord} \, S_{\overline{l},j}(t,\overline\alpha) < \infty, &j=1, \ldots, m.
\end{cases}
\end{equation*}
In the diagonal case, the twin approximations \eqref{TwinHermite} and the classical Hermite-Pad\'e approximations \eqref{Hermite} coincide. The fact that Pad\'e approximations to a given function are unique up to a non-zero constant is expressed in the homogeneous vector specified by the coefficients of the denominator polynomial $A_{\overline{l},0}(t)$ or $B_{\overline{l},0}(t)$.

Motivation for finding the explicit twin approximations \eqref{TwinHermite} comes from their possible applicability
to arithmetical questions. The known Hermite-Pad\'e approximations \eqref{Hermite} are well suited e.g. for proving 
sharp transcendence measures for rational powers of $e$ (see \cite{EMS2016}).
On the other hand, the following type Pad\'e  approximations
\begin{equation}\label{DualHermite}
B_{\overline{\nu},0} (t,\overline\alpha) e^{\alpha_j t} - B_{\overline{\nu},j} (t,\overline\alpha) = 
S_{\overline{\nu},j} (t,\overline\alpha), \quad j=1, \ldots, m,
\end{equation}
where $\overline{\nu} = ( \nu_1, \ldots, \nu_m)^T\in\mathbb{Z}_{\ge 1}^m$, $\nu_1\le l_1,\ldots,\nu_m\le l_m$,
$\nu_1 + \ldots + \nu_m =: M \le L$ and
\begin{equation*}
\begin{cases}
\deg_t B_{\overline{\nu},j} (t,\overline\alpha) \le L, &j=0, \ldots, m; \\
L+\nu_j +1 \le \underset{t=0}{\ord} \, S_{\overline{\nu},j} (t,\overline\alpha) < \infty, &j=1, \ldots, m
\end{cases}
\end{equation*}
(of which \eqref{TwinHermite} is a special case) have been successfully used in proving a Baker-type transcendence measure for $e$ (see \cite{AKT}). In Baker-type bounds the dependence on the individual heights of the coefficients of the linear form is visible in the bound. This provides an additional challenge compared to settling for the maximum height only.

Due to the lack of explicit twin approximations \eqref{TwinHermite}, 
the works considering Baker-type lower bounds for linear forms have relied on Siegel's lemma; see Baker \cite{BAKER1965},
Mahler \cite{MAHLER1975}, and, for a generalised transcendence measure of $e$, Ernvall-Hyt\"onen et al. \cite{AKT}.
In these Baker-type lower bounds the error terms are weaker than the 
corresponding ones in those transcendence measures that depend on the maximum height only
(see Hata \cite{HATA1995} and Ernvall-Hyt\"onen et al. \cite{EMS2016}).

For example, in \cite{AKT} the authors present an explicit Baker-type lower bound
$$
\left| \beta_0 + \beta_1 e + \beta_2 e^2 + \ldots + \beta_m e^m \right| > \frac{1}{h^{1+ \epsilon (h)}}
$$
valid for all
$$
\beta = (\beta_0, \ldots, \beta_m)^T \in \mathbb{Z}^{m+1}, \quad h_i = \max \{ 1, | \beta_i | \}, \quad h=h_1 \cdots h_m,
$$
with a lower bound for $\log h$ depending on $m$ and an error term
\begin{equation}\label{errorterm}
\epsilon (h) = \frac{(4+7m) \sqrt{\log (m+1)}}{\sqrt{\log \log h}}.
\end{equation}
On the other hand, in \cite{EMS2016} the authors prove a transcendence measure for $e$ based on explicit Pad\'e approximations and there the corresponding error term
$$
\frac{c m^2 \log m}{\log \log H}, \quad c=c(m) \le 1, \quad H = \max_{1 \le i \le m} \{ 1, | \beta_i | \},
$$
is smaller than $m \epsilon (H^m)$ (in the above works $\log H$ is very large compared to $m$).
Thus, it is reasonable to suppose that explicit twin approximations \eqref{TwinHermite} could perhaps imply a sharper Baker-type transcendence measure for $e$.

\subsection{Siegel's lemma}

If we write
\begin{equation}\label{Bnu0}
B_{\overline{\nu},0}(t,\overline\alpha) = \sum_{h=0}^L c_h \frac{L!}{h!} t^h,
\end{equation}
then \eqref{DualHermite} yields a group of $M$ equations with integer coefficients in the $L+1$ unknowns $c_h$.
The solutions to groups of equations with less equations than unknowns can be estimated by using the Thue-Siegel lemma.
In 1909 Thue \cite{Thue1909} improved the Liouville bound for algebraic numbers. 
For that purpose he needed to find a small non-zero integer solution $(x_1, \ldots ,x_N)$ to a system of $M$ equations 
in integer coefficients with $N$ unknowns, $M < N$.
An essential feature is that the small solution is bounded with a non-trivial upper bound depending on the coefficients. 
Thue's idea was present already in the 1908 paper \cite{Thue1908A}.

In his celebrated paper \cite{Siegel} Siegel formalised Thue's idea which is known since then as Thue-Siegel's lemma or Siegel's lemma.
Below we present a practical variant of Siegel's lemma from Mahler \cite{MAHLER1975}. Throughout this paper $\mathcal{M}_{k \times l} (R)$ denotes the set of $k \times l$ matrices with coefficients in $R$.
 
\begin{lemma}\label{Thue-Siegel's}\index{Siegel's lemma} 
Let $\mathcal{V} = (v_{mn}) \in \mathcal{M}_{M \times N} (\Z)$, and assume that
\begin{equation}\label{rivisumma}
\|\underline{v}_m\|_{1}:=\sum_{n=1}^{N}|v_{mn}|\in \mathbb{Z}_{\ge 1},\quad m=1,\ldots,M,
\end{equation}
where $\underline{v}_m$ denotes the $m$th row of the matrix $\mathcal{V}$.
Suppose that $M<N$; then the equation 
\begin{equation*}\label{SIEGELGROUP}
\mathcal{V} \overline{x} = \overline{0}
\end{equation*}
has a non-zero integer solution $\overline{x}=(x_1,\ldots,x_N)^T \in \mathbb{Z}^{N}\setminus\{\overline{0}\}$ with
\begin{equation*}\label{SIEGELESTIMATE}
1\le \left\| \overline{x} \right\|_\infty:= \max_{1\le n\le N} |x_n|\le 
\left\lfloor (\|\underline{v}_1\|_{1}\cdots \|\underline{v}_M\|_{1})^{\frac{1}{N-M}}\right\rfloor.
\end{equation*}
\end{lemma}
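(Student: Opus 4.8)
The plan is to prove the statement by Dirichlet's pigeonhole principle, in the classical shape going back to Thue and Siegel. Write $P:=\|\underline{v}_1\|_{1}\cdots\|\underline{v}_M\|_{1}$ and $B:=\left\lfloor P^{1/(N-M)}\right\rfloor$ for the claimed upper bound. By the normalisation \eqref{rivisumma} each factor of $P$ is a positive integer, so $P\ge 1$ and hence $B\ge 1$; also $M<N$ guarantees that the exponent $1/(N-M)$ is meaningful and that $N-M\ge 1$. First I would introduce the finite box $\mathcal{C}$ of all integer vectors $\overline{y}=(y_1,\ldots,y_N)^T$ with $0\le y_n\le B$ for every $n$; clearly $\#\mathcal{C}=(B+1)^N$.

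Next I would bound the cardinality of the image $\mathcal{V}\mathcal{C}\subseteq\mathbb{Z}^M$. For a fixed row index $m$, separate the positive and negative entries of $\underline{v}_m$: as $\overline{y}$ runs through $\mathcal{C}$, the integer $\underline{v}_m\cdot\overline{y}$ stays in the interval between $-B\sum_{v_{mn}<0}|v_{mn}|$ and $B\sum_{v_{mn}>0}v_{mn}$, an interval of length $B\|\underline{v}_m\|_{1}$ with integer endpoints, hence containing at most $B\|\underline{v}_m\|_{1}+1$ integers. Multiplying over $m$ gives $\#(\mathcal{V}\mathcal{C})\le\prod_{m=1}^{M}\bigl(B\|\underline{v}_m\|_{1}+1\bigr)$. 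If this product is strictly less than $(B+1)^N=\#\mathcal{C}$, then two distinct vectors $\overline{y}',\overline{y}''\in\mathcal{C}$ have $\mathcal{V}\overline{y}'=\mathcal{V}\overline{y}''$, so $\overline{x}:=\overline{y}'-\overline{y}''$ is a non-zero integer vector with $\mathcal{V}\overline{x}=\overline{0}$ and all entries in $\{-B,\ldots,B\}$, i.e. $1\le\|\overline{x}\|_\infty\le B$, which is exactly the assertion.

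It remains to establish the counting inequality $\prod_{m=1}^{M}\bigl(B\|\underline{v}_m\|_{1}+1\bigr)<(B+1)^N$. Here is where the hypothesis $\|\underline{v}_m\|_{1}\in\mathbb{Z}_{\ge 1}$ is used: since $\|\underline{v}_m\|_{1}\ge 1$ we have $B\|\underline{v}_m\|_{1}+1\le(B+1)\|\underline{v}_m\|_{1}$, and therefore
$$
\prod_{m=1}^{M}\bigl(B\|\underline{v}_m\|_{1}+1\bigr)\le (B+1)^M\prod_{m=1}^{M}\|\underline{v}_m\|_{1}=(B+1)^M P .
$$
On the other hand, $B$ being the floor of $P^{1/(N-M)}$ gives $B+1>P^{1/(N-M)}$, hence $(B+1)^{N-M}>P$; multiplying by $(B+1)^M$ yields $(B+1)^N>(B+1)^M P$, and combining the two chains gives the desired strict inequality.

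The argument is essentially routine, so there is no single deep obstacle; the places that need care are the counting step, where one must keep the inequality \emph{strict} for the pigeonhole principle to apply, and the elementary estimate $B\|\underline{v}_m\|_{1}+1\le(B+1)\|\underline{v}_m\|_{1}$, which is precisely the point at which the normalisation of the rows enters and which would fail without it.
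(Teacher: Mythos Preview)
Your argument is correct and is the classical Thue--Siegel pigeonhole proof. Note that the paper does not actually prove this lemma: it is merely quoted as a known result from Mahler \cite{MAHLER1975}, so there is no ``paper's own proof'' to compare against; your proof is precisely the standard one underlying that citation.
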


It had been noticed that explicit Pad\'e approximations were not always enough for Diophantine purposes.
Therefore, starting from Siegel's work \cite{Siegel}, Siegel's lemma was greatly appreciated
because of its flexibility and power in Diophantine approximation and transcendence proofs.

The use of Siegel's lemma is the reason for introducing the above mentioned parameters $\nu_1,\ldots,\nu_m$ 
into the approximation problem \eqref{DualHermite}. 
Since we don't know the explicit solution to \eqref{TwinHermite}, the parameters $\nu_1,\ldots,\nu_m$  give more freedom to optimise the estimate. 
In case \eqref{Hermite} the explicit solution is known and therefore the problem is presented only with the parameters 
$l_0,l_1, \ldots,  l_m$.

As explained in Section \ref{sec:twin}, the lower bounds for linear forms 
coming from Siegel's lemma are not always as sharp as those coming from Pad\'e approximations.
Therefore it is plausible that the already existing results might be improved by
some refined version of Siegel's lemma. Such an improvement indeed exists and is given in Bombieri and Vaaler \cite{BOMVAA};
for a shorter proof in the integer case, see \cite[Theorem 14.3]{MAT2016}. 
Below we present the Bombieri-Vaaler version in the integer case.

\begin{lemma}\label{Bombieri-Vaaler} \cite[Theorem 2]{BOMVAA}
Let $\mathcal{V} \in \mathcal{M}_{M \times N} (\Z)$, $M < N$, and $\rank \mathcal{V} = M$. Then the equation
\begin{equation*}\label{BomVaamatrix}
\mathcal{V} \overline{x} = \overline{0}
\end{equation*}
has $N-M$ linearly independent integer solutions $\overline{x}_1, \ldots, \overline{x}_{N-M} \in \Z^{N} \setminus \{ \overline{0} \}$ such that
$$
\left\| \overline{x}_1 \right\|_\infty \cdots \left\| \overline{x}_{N-M} \right\|_\infty \le \frac{\sqrt{\det (\mathcal{V} \mathcal{V}^T )}}{D},
$$
where $D$ is the greatest common divisor of all the $M \times M$ minors of $\mathcal{V}$.
\end{lemma}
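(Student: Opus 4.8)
The plan is to translate the statement into a geometry-of-numbers question about the integral kernel lattice of $\mathcal{V}$. Set
\[
\Lambda:=\{\overline{x}\in\Z^N:\mathcal{V}\overline{x}=\overline{0}\}.
\]
Since $\rank\mathcal{V}=M$, this is a sublattice of $\Z^N$ of rank $r:=N-M$, and it is \emph{primitive} (that is, $\Z^N/\Lambda$ is torsion-free), because $\mathcal{V}(k\overline{x})=\overline{0}$ with $k\in\Z\setminus\{0\}$ already forces $\mathcal{V}\overline{x}=\overline{0}$. So it suffices to exhibit $r$ linearly independent vectors of $\Lambda$ whose $\|\cdot\|_\infty$-norms have product at most $\sqrt{\det(\mathcal{V}\mathcal{V}^T)}/D$.

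The first substantive step is to identify the covolume $\det\Lambda$ (with respect to the Euclidean structure induced on $\Lambda\otimes\R$) with $\sqrt{\det(\mathcal{V}\mathcal{V}^T)}/D$. I would use orthogonal duality in the unimodular lattice $\Z^N$: for a primitive sublattice the orthogonal lattice $\Lambda^{\perp}:=(\Lambda\otimes\R)^{\perp}\cap\Z^N$ is again primitive, of rank $M$, and $\det\Lambda=\det\Lambda^{\perp}$ (this comes out of the Smith normal form of a basis matrix of $\Lambda$, or of an exterior-algebra computation). Here $(\Lambda\otimes\R)^{\perp}$ is the row space of $\mathcal{V}$, so $\Lambda^{\perp}$ is precisely the saturation in $\Z^N$ of the row lattice $\Gamma:=\mathcal{V}^{T}\Z^M$. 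The Gram determinant gives $\det\Gamma=\sqrt{\det(\mathcal{V}\mathcal{V}^T)}$, while the Smith normal form of $\mathcal{V}$ shows that the index $[\Lambda^{\perp}:\Gamma]$ equals the product of the elementary divisors of $\mathcal{V}$, which is exactly the greatest common divisor $D$ of all $M\times M$ minors. Combining, $\det\Lambda=\det\Lambda^{\perp}=\det\Gamma/D=\sqrt{\det(\mathcal{V}\mathcal{V}^T)}/D$.

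With the covolume known, I would apply Minkowski's second convex body theorem in the $r$-dimensional space $V:=\Lambda\otimes\R$ to the symmetric convex body $C:=\{v\in V:\|v\|_\infty\le 1\}=V\cap[-1,1]^N$: the successive minima $\lambda_1\le\dots\le\lambda_r$ of $\Lambda$ with respect to $C$ satisfy $\lambda_1\cdots\lambda_r\le 2^r\det\Lambda/\mathrm{vol}_V(C)$, and they are attained by linearly independent lattice vectors $\overline{x}_1,\dots,\overline{x}_r\in\Lambda$ with $\|\overline{x}_i\|_\infty\le\lambda_i$. These are automatically $N-M$ linearly independent nonzero integer solutions of $\mathcal{V}\overline{x}=\overline{0}$ with $\|\overline{x}_1\|_\infty\cdots\|\overline{x}_r\|_\infty\le\lambda_1\cdots\lambda_r$, so the whole proof reduces to showing $\mathrm{vol}_V(C)\ge 2^r$.

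That volume bound is the main obstacle: $C$ is the central section of the cube $[-1,1]^N$ by the rational subspace $V$, and a crude estimate of its $r$-dimensional volume (which a priori could be far below $2^r$) would spoil the clean factor $1/D$. The decisive ingredient is Vaaler's cube-slicing inequality, that every central $d$-dimensional section of the unit cube $[-\tfrac12,\tfrac12]^N$ has $d$-dimensional volume at least $1$; rescaling by $2$ yields $\mathrm{vol}_V(C)\ge 2^r$, hence $\lambda_1\cdots\lambda_r\le\det\Lambda=\sqrt{\det(\mathcal{V}\mathcal{V}^T)}/D$ and the lemma follows. Vaaler's inequality (together with Minkowski's second theorem) is the one genuinely deep input; its proof is a self-contained Fourier-analytic estimate for densities of sums of independent uniform random variables, and I would cite it rather than reprove it.
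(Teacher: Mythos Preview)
The paper does not prove this lemma at all: it is quoted verbatim as \cite[Theorem~2]{BOMVAA} and the reader is referred to Bombieri--Vaaler's original paper (and to \cite[Theorem~14.3]{MAT2016} for a shorter proof over~$\Z$). So there is nothing in the paper to compare your argument to.

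That said, your outline is correct and is precisely the standard route to the integer case of Bombieri--Vaaler: identify the covolume of the kernel lattice via orthogonal duality and Smith normal form as $\sqrt{\det(\mathcal{V}\mathcal{V}^T)}/D$, then combine Minkowski's second theorem with Vaaler's cube-slicing inequality to kill the $2^r$ factor. The one point worth tightening in a write-up is the sentence ``the index $[\Lambda^{\perp}:\Gamma]$ equals the product of the elementary divisors of $\mathcal{V}$, which is exactly the greatest common divisor $D$ of all $M\times M$ minors'': this is true (it is the top determinantal divisor $D_M=d_1\cdots d_M$), but it deserves a one-line justification rather than an assertion. Everything else is sound, and you are right that Vaaler's inequality is the genuinely deep ingredient that must be cited.
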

In fact  Bombieri and Vaaler \cite{BOMVAA} proved a more general result over the algebraic numbers by using geometry of 
numbers over the ad\`eles; see also Fukshansky \cite{Fukshansky2006}.

An important feature of Siegel's lemma is that it only implies the existence of a non-trivial integer solution
with an appropriate upper bound. 
A priori, no information about the explicit expressions of the solutions to, say, equations \eqref{Hermite} or \eqref{TwinHermite} is given.
However, there is a deep connection between Siegel's lemma and the solution, as revealed in Bombieri and Vaaler's version of 
Siegel's lemma, Lemma \ref{Bombieri-Vaaler}.
Namely, the Grassmann coordinates of the exterior product of the row vectors of the matrix $\mathcal{V}$ are precisely the 
$M\times M$ minors of $\mathcal{V}$; for details, see \cite[Section 14.2]{MAT2016}. 
On the other hand, the solution to the Pad\'e problem \eqref{TwinHermite} is a homogeneous vector of the 
$L\times L$ minors of the coefficient matrix of the group of equations formed from \eqref{DualHermite} and \eqref{Bnu0} in the case $\nu_j=l_j$, $M=L$.
So the same minors that one needs to study to be able to apply Lemma \ref{Bombieri-Vaaler} actually form the solution vector to the 
Pad\'e problem.
Hence the title \emph{Hermite-Thue equation}.

Since Hermite's, Thue's and Siegel's works, Pad\'e approximations and Siegel's lemma have been cornerstones in transcendence theory and in the theory of Diophantine approximations.
The book of Fel'dman and Nesterenko \cite{Feldman} is recommended reading for those interested in a more detailed overview of transcendental number theory.
See also \cite{BAKER1975}, \cite{SHIDLOVSKII}, \cite{SIEGEL}.

\section{Results}

As a warm-up we resolve the type II  Pad\'e approximation problem \eqref{Hermite}, the tame case, in Section \ref{sec:tame} by computing the homogeneous vector of the $L\times L$ minors of the coefficient matrix (Cramer's rule).
 
In Section \ref{sec:wild} we examine our case \eqref{DualHermite}, the wild case, which turns out to be much tougher. 
Choose $\overline{\alpha} = \overline{a} := (a_1, \ldots, a_m)^T$, where $a_1, \ldots, a_m$ are pairwise different, non-zero integers. Write
\begin{equation*}
B_{\overline{\nu},0}(t,\overline{a}) = \sum_{h=0}^L c_h \frac{L!}{h!} t^h,
\end{equation*}
where $\overline{\nu} := (\nu_1, \ldots, \nu_m )^T$, and the numbers
$\nu_1, \ldots, \nu_m, l_1, \ldots, l_m \in \Z_{\ge 1}$ satisfy
$$
1 \le \nu_j \le l_j, \quad M := \nu_1 + \ldots + \nu_m \le L := l_1 + \ldots + l_m.
$$
Then \eqref{DualHermite} yields the matrix equation
\begin{equation*}\label{unknowneq1}
\mathcal{V} \overline{c} = \overline{0}, \quad \overline{c} := (c_0, c_1, \ldots, c_L)^T,
\end{equation*}
with
\begin{equation*}
\mathcal{V} = \mathcal{V}(\overline{a}) :=
\begin{pmatrix}
\binom{L + 1}{0} a_1^L & \binom{L + 1}{1} a_1^{L-1} & \cdots & \binom{L + 1}{L - 1} a_1 & \binom{L +1}{L} \\
\binom{L + 2}{0} a_1^L & \binom{L + 2}{1} a_1^{L-1} & \cdots & \binom{L + 2}{L - 1} a_1 & \binom{L +2}{L} \\
\vdots & \vdots & \ddots & \vdots & \vdots \\
\binom{L + \nu_1}{0} a_1^L & \binom{L + \nu_1}{1} a_1^{L-1} & \cdots & \binom{L + \nu_1}{L - 1} a_1 & \binom{L +\nu_1}{L} \\

\vdots & \vdots & \ddots & \vdots & \vdots \\

\vdots & \vdots & \ddots & \vdots & \vdots \\

\binom{L + 1}{0} a_m^L & \binom{L + 1}{1} a_m^{L-1} & \cdots & \binom{L + 1}{L - 1} a_m & \binom{L +1}{L} \\
\binom{L + 2}{0} a_m^L & \binom{L + 2}{1} a_m^{L-1} & \cdots & \binom{L + 2}{L - 1} a_m & \binom{L +2}{L} \\
\vdots & \vdots & \ddots & \vdots & \vdots \\
\binom{L + \nu_m}{0} a_m^L & \binom{L + \nu_m}{1} a_m^{L-1} & \cdots & \binom{L + \nu_m}{L - 1} a_m & \binom{L +\nu_m}{L} \\
\end{pmatrix}_{\!M \times (L+1)},
\end{equation*}
for which Siegel's lemma produces a non-zero integer solution with a non-trivial upper bound.

In order to be able to study the common factors of the minors of the integer matrix $\mathcal{V}(\overline{a})$, we switch to the corresponding polynomial matrix $\mathcal{V}(\overline{\alpha}) \in \mathcal{M}_{M \times (L+1)} (\mathbb{Z}[\alpha_1, \ldots, \alpha_m])$.
We are able find two different high order polynomial factors from the $M \times M$ minors of $\mathcal{V}(\overline{\alpha})$ (see Theorem \ref{polynomialfactortheorem}). Choosing $\overline{\alpha} = \overline{a} = (a_1, \ldots, a_m)^T$ then leads us to
\begin{theorem}\label{IntegerFactortheorem}
Let $a_1,\ldots,a_m \in \Z$. Then
\begin{equation}\label{integerfactor}
\left( \prod_{1 \le j \le m} a_j^{\binom{\nu_j}{2}} \right)
\prod_{1 \le i < j \le m} (a_i - a_j)^{\min \left\{ \nu_i^2,\, \nu_j^2 \right\}} 
\underset{\mathbb{Z}}{\bigg|} D(\overline{a}),
\end{equation}
where $D(\overline{a})$ is the greatest common divisor of all the $M \times M$ minors of the matrix $\mathcal{V}(\overline{a}) \in \mathcal{M}_{M \times (L+1)} (\mathbb{Z})$.
\end{theorem}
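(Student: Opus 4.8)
The plan is to deduce Theorem~\ref{IntegerFactortheorem} from the polynomial factorization in Theorem~\ref{polynomialfactortheorem} by a specialization argument. Concretely, Theorem~\ref{polynomialfactortheorem} asserts that the polynomial
\[
P(\overline\alpha) := \Bigl(\prod_{1\le j\le m}\alpha_j^{\binom{\nu_j}{2}}\Bigr)\prod_{1\le i<j\le m}(\alpha_i-\alpha_j)^{\min\{\nu_i^2,\nu_j^2\}}
\]
divides, in $\Z[\alpha_1,\dots,\alpha_m]$, every $M\times M$ minor of the polynomial matrix $\mathcal V(\overline\alpha)$. Write an arbitrary $M\times M$ minor as $\Delta_I(\overline\alpha) = P(\overline\alpha)\,Q_I(\overline\alpha)$ with $Q_I\in\Z[\alpha_1,\dots,\alpha_m]$. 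The first step is simply to substitute $\overline\alpha=\overline a=(a_1,\dots,a_m)^T$: since $\Z[\alpha_1,\dots,\alpha_m]\to\Z$, $\alpha_j\mapsto a_j$, is a ring homomorphism and the entries of $\mathcal V(\overline\alpha)$ specialize to the entries of $\mathcal V(\overline a)$, each integer minor $\Delta_I(\overline a)$ equals $P(\overline a)\,Q_I(\overline a)$, so $P(\overline a)\mid_{\Z}\Delta_I(\overline a)$ for every $I$. Hence $P(\overline a)$ divides the gcd of all these minors, which is exactly $D(\overline a)$, giving \eqref{integerfactor}.

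The only subtlety is that $P(\overline a)$ might vanish — if some $a_j=0$ with $\nu_j\ge 2$, or if $a_i=a_j$ for $i\ne j$ — in which case the divisibility statement is vacuous but still formally true in $\Z$ (every integer divides $0$), so no separate argument is needed; I would note this in one line. One should also observe that the matrix $\mathcal V(\overline a)$ really is obtained from $\mathcal V(\overline\alpha)$ by the substitution $\alpha_j\mapsto a_j$: the $\nu_j$ rows of $\mathcal V(\overline\alpha)$ associated with the block for index $j$ have entries $\binom{L+k}{s}\alpha_j^{\,L-s}$ for $k=1,\dots,\nu_j$ and $s=0,\dots,L$, which are polynomials in $\alpha_j$ alone and specialize correctly. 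Since minors are polynomial expressions in the matrix entries, specialization of the matrix commutes with taking minors, so $\Delta_I(\overline a)$ is the value at $\overline a$ of the polynomial minor $\Delta_I(\overline\alpha)$.

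Thus the proof is essentially a one-paragraph reduction: invoke Theorem~\ref{polynomialfactortheorem}, specialize, and pass to the gcd. There is no genuine obstacle here; all the real work — identifying the factors $\alpha_j^{\binom{\nu_j}{2}}$ coming from the columns and $(\alpha_i-\alpha_j)^{\min\{\nu_i^2,\nu_j^2\}}$ coming from the confluent Vandermonde-type structure, and proving these polynomial divisibilities — is already carried by the earlier theorem. The one thing worth being careful about is not claiming anything about the exponents being optimal or about $Q_I(\overline a)$ being nonzero; the statement is only a divisibility, and that is exactly what the specialization yields.
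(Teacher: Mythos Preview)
Your proposal is correct and follows essentially the same route as the paper: the paper's proof is a one-line specialization of Theorem~\ref{polynomialfactortheorem}, substituting $\overline{\alpha}=(a_1,\dots,a_m)^T$ and observing that the specialized polynomial gcd $\widehat{D}(\overline{a})$ divides the integer gcd $D(\overline{a})$. Your version is slightly more explicit (working minor by minor and noting that specialization commutes with taking minors, and handling the degenerate case $P(\overline a)=0$), but the argument is the same.
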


In addition, we can prove that the rank of the polynomial matrix $\mathcal{V}(\overline{\alpha})$ over $\Z[\alpha_1, \ldots, \alpha_m]$ is $M$ (see Lemma \ref{ranklemma2}), 
but the rank of the coefficient matrix $\mathcal{V}(\overline{a})$ over the integers remains an open question. This is a problem that needs answering before the Bombieri-Vaaler version of Siegel's lemma (Lemma \ref{Bombieri-Vaaler}) can be applied. At the moment it seems to be hard and requires some further investigation. Assuming that the rank condition could be fulfilled, the common factor of Theorem \ref{IntegerFactortheorem} gives us the possibility to improve the error term \eqref{errorterm}.

In the particular case $\nu_j = l_j$, $M=L$, the wild case \eqref{DualHermite} reduces to the twin Pad\'e problem \eqref{TwinHermite}, for which we give a partial answer in the following theorem:

\begin{theorem}\label{UnKnown}
Let $l_1, \ldots,  l_m$ be positive integers and let $\alpha_1, \ldots, \alpha_m$ be distinct variables. 
Denote $\overline{\alpha} = (\alpha_1, \ldots, \alpha_m)^T$, $\overline{l} = (l_1, \ldots,  l_m)^T$, $L:=l_1+\ldots+l_m$. 
Then there exist non-zero polynomials $B_{\overline{l},j}(t, \overline{\alpha}) \in \Q[t, \overline{\alpha}]$ and remainders 
$S_{\overline{l},j}(t,\overline\alpha)$ such that
\begin{equation*}\label{TWIN2}
B_{\overline{l},0}(t, \overline{\alpha}) e^{\alpha_j t} - B_{\overline{l},j}(t, \overline{\alpha})  = 
S_{\overline{l},j}(t, \overline{\alpha}) , \quad j=1, \ldots, m,
\end{equation*}
where 
\begin{equation*}
\begin{cases}
\deg_t B_{\overline{l},j} (t, \overline{\alpha})  \le L, &j=0, \ldots, m; \\
L+l_j +1 \le \underset{t=0}{\ord} \, S_{\overline{l},j}(t, \overline{\alpha})  < \infty, &j=1, \ldots, m.
\end{cases}
\end{equation*}
Moreover, we have 
\begin{equation*}\label{}
B_{\overline{l},0}(t,\overline{\alpha}) = \sum_{i=0}^L \frac{L!}{i!} \tau_{i}\!\left(\overline{l},\overline{\alpha}\right) t^i,
\quad \tau_{i}\!\left(\overline{l},\overline{\alpha}\right) = \frac{(-1)^i\mathcal{V}[i]}{T\!\left(\overline{l},\overline{\alpha}\right)} \in \Z[\overline{\alpha}],                  
\end{equation*}
where
\begin{equation}\label{Wildfactor}
T\!\left(\overline{l},\overline{\alpha}\right) := \alpha_1^{\binom{l_1}{2}} \cdots \alpha_m^{\binom{l_m}{2}}
\prod_{1 \le i < j \le m} (\alpha_i - \alpha_j)^{\min \left\{ l_i^2,\, l_j^2 \right\}}
\end{equation}
is a common factor of the $L \times L$ minors $\mathcal{V}[i]$, $i=0,\ldots,L$, of the matrix $\mathcal{V}(\overline{\alpha})$.
\end{theorem}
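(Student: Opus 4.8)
The plan is to convert the approximation equations into a single homogeneous linear system, read off a solution by Cramer's rule, and then normalise it using the divisibility already proved in Theorem \ref{polynomialfactortheorem}.

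First I would set up the system. Work in $\Q(\overline\alpha)[[t]]$ and write $B_{\overline l,0}(t,\overline\alpha)=\sum_{h=0}^{L}c_h\frac{L!}{h!}t^h$ with undetermined $c_h$. Let $B_{\overline l,j}(t,\overline\alpha)$ be the truncation of $B_{\overline l,0}(t,\overline\alpha)e^{\alpha_j t}$ to degree $\le L$ in $t$, and put $S_{\overline l,j}:=B_{\overline l,0}e^{\alpha_j t}-B_{\overline l,j}$; then $\ord_{t=0}S_{\overline l,j}\ge L+1$ automatically, while the condition $\ord_{t=0}S_{\overline l,j}\ge L+l_j+1$ is equivalent to the vanishing of the coefficients of $t^{L+1},\dots,t^{L+l_j}$ in $B_{\overline l,0}e^{\alpha_j t}$. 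Expanding $e^{\alpha_j t}$, the coefficient of $t^{L+n}$ (scaled by $(L+n)!/L!$ and divided by $\alpha_j^n$) equals $\sum_{h=0}^{L}c_h\binom{L+n}{h}\alpha_j^{L-h}$, so the full collection of $L=l_1+\dots+l_m$ conditions, over $n=1,\dots,l_j$ and $j=1,\dots,m$, is exactly $\mathcal V(\overline\alpha)\overline c=\overline 0$, where $\mathcal V(\overline\alpha)$ is the $L\times(L+1)$ matrix displayed above with $\nu_j=l_j$.

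Next I would solve the system. For an $L\times(L+1)$ matrix the vector of signed maximal minors lies in the kernel: if $\mathcal V[i]$ is the minor obtained by deleting column $i$, then $\sum_{i=0}^{L}(-1)^i v_{k,i}\mathcal V[i]$ is the Laplace expansion along the first row of the $(L+1)\times(L+1)$ determinant whose first row is the $k$th row of $\mathcal V(\overline\alpha)$ and whose remaining rows are the rows of $\mathcal V(\overline\alpha)$, hence it vanishes for every $k$. Thus $\overline c=\big((-1)^i\mathcal V[i]\big)_{i=0}^{L}$ solves $\mathcal V(\overline\alpha)\overline c=\overline 0$; by Lemma \ref{ranklemma2} we have $\rank\mathcal V(\overline\alpha)=L$, so not all $\mathcal V[i]$ vanish and this $\overline c$ spans the one-dimensional kernel over $\Q(\overline\alpha)$. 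By Theorem \ref{polynomialfactortheorem} with $\nu_j=l_j$, the polynomial $T(\overline l,\overline\alpha)$ of \eqref{Wildfactor} divides each minor $\mathcal V[i]$ in $\Z[\overline\alpha]$; hence $\tau_i:=(-1)^i\mathcal V[i]/T(\overline l,\overline\alpha)\in\Z[\overline\alpha]$, the vector $(\tau_0,\dots,\tau_L)^T$ is again a (nonzero) solution, and $B_{\overline l,0}(t,\overline\alpha):=\sum_{i=0}^{L}\frac{L!}{i!}\tau_i t^i$ is a nonzero polynomial of degree $\le L$ with precisely the asserted form. It remains to verify nonvanishing and finiteness: since $B_{\overline l,0}\ne 0$ is a polynomial in $t$ of degree $\le L$, its order at $t=0$ is at most $L$, and as $e^{\alpha_j t}$ is a unit of $\Q(\overline\alpha)[[t]]$, the product $B_{\overline l,0}e^{\alpha_j t}$ also has $t$-order $\le L$; therefore its degree-$\le L$ truncation $B_{\overline l,j}$ is nonzero, and the product is not a polynomial in $t$, so $S_{\overline l,j}\ne 0$ and $\ord_{t=0}S_{\overline l,j}<\infty$.

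The genuinely hard input is not in this argument but in the results it quotes: the divisibility $T(\overline l,\overline\alpha)\mid\mathcal V[i]$ is Theorem \ref{polynomialfactortheorem}, which also yields Theorem \ref{IntegerFactortheorem} upon specialising $\overline\alpha=\overline a$, and $\rank\mathcal V(\overline\alpha)=L$ is Lemma \ref{ranklemma2}. If one had to prove the divisibility directly, the main obstacle would be a sufficiently precise evaluation of the Vandermonde-type minors $\mathcal V[i]$ — enough to exhibit a zero of order $\min\{l_i^2,l_j^2\}$ along the hyperplane $\alpha_i=\alpha_j$ and of order $\binom{l_j}{2}$ along $\alpha_j=0$ — most naturally by column operations turning each block into (derivatives of) a confluent Vandermonde determinant, or by a specialisation and degree-counting argument.
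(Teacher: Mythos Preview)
Your proof is correct and follows essentially the same route as the paper: reduce the Pad\'e conditions to the homogeneous system $\mathcal{V}(\overline{\alpha})\overline{c}=\overline{0}$, take the signed maximal minors as a solution (Cramer's rule), invoke Lemma~\ref{ranklemma2} for nontriviality, and divide through by the common factor $T(\overline{l},\overline{\alpha})$ supplied by Theorem~\ref{polynomialfactortheorem}. Your write-up is in fact a bit more explicit than the paper's, spelling out the Laplace-expansion justification of Cramer's rule and the verification that $B_{\overline{l},j}\neq 0$ and $\operatorname{ord}_{t=0}S_{\overline{l},j}<\infty$.
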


The computational evidence in
\ref{minorappendix} indicates that the common factor \eqref{Wildfactor} of Theorem \ref{UnKnown} may be the best achievable 
primitive polynomial factor.
As opposite to the tame case \eqref{Hermite} however, after dividing by the common factor, we are now left with the non-explicit polynomials $\frac{L!}{i!} \tau_{i}\!\left(\overline{l},\overline{\alpha}\right)$, $i=0,1,\ldots,L$, representing the coordinates of the homogeneous solution vector. 

There is plenty of literature written about determinants, but very few sources mention the kinds block matrices from which our minors originate.
The works \cite{FloweHarris}, \cite{Krattenthaler} and \cite{ALF1976}, however, gave the basic inspiration to our considerations
in the tame case. In the wild case \eqref{DualHermite} the determinants were painful to open, and despite of finding 
the factor \eqref{Wildfactor}, still remain unfinished. 
We have not found any reference elsewhere addressing this problem. 

We stress that giving rigorous proofs of products of several high order factors means that we need to work in the polynomial ring 
$\Z[\alpha_1, \ldots, \alpha_m]$ which is a UFD and where e.g. $\alpha_i$ and $\alpha_i-\alpha_j$, $i\ne j$, are prime elements. 
Later on, when we specialise the variables $\alpha_1, \ldots, \alpha_m$ to be integers, we get the corresponding factors in $\Z$.

Finally, we wish to point out that Section \ref{sec:vandermonde} concerning factors of Vandermonde-type block determinants---although regarded mainly as our tools in this paper---could be of interest on its own to readers from various fields.

\section{Preliminaries and tools}

\subsection{Exterior algebras}

The properties of exterior algebras provide us useful tools for studying determinants. Some essentials are covered here---for a deeper insight we recommend Rotman \cite{Rotman}.

Let $\mathbf{R}$ be a commutative ring and $\mathbf{M}$ a free $\mathbf{R}$-module with $\rank \mathbf{M} = n$. The exterior algebra of $\mathbf{M}$ is denoted by $\bigwedge (\mathbf{M})$. 
The ring product $\wedge$ is called \emph{wedge product} and it has the property $m \wedge m = 0$ for all $m \in \mathbf{M}$.

\subsubsection{Increasing lists}

Let $n \in \Z_{\ge 1}$ and $p \in \{ 0, 1, \ldots, n \}$. The numbers
$$
i_1, i_2, \ldots, i_p \in \{ 1, \ldots, n \}
$$
form an \emph{increasing $(0 \le p \le n)$-list} $\sigma_p$, if
$$
1 \le i_1 < i_2 < \ldots < i_p \le n.
$$
If $p=0$, then $\sigma_p = \varnothing$. The set of all increasing $(0 \le p \le n)$-lists is denoted by
$$
C(n, p) := \{ \{ i_1, i_2, \ldots, i_p \} \; | \; 1 \le i_1 < i_2 < \ldots < i_p \le n \}.
$$\

Let $H = \{ h_1, \ldots, h_p \}$ and $K = \{ k_1, \ldots, k_q \}$ be increasing $(0 \le p \le n)$- and $(0 \le q \le n)$-lists, respectively. 
When the lists $H$ and $K$ are disjoint, we denote by $\tau_{H, K}$ the permutation in the symmetric group $S_{p+q}$ which arranges the list
$$
h_1, \ldots, h_p, k_1, \ldots, k_q
$$
into an increasing $(0 \le p+q \le n)$-list
$$
H*K = \{ j_1, \ldots, j_{p+q} \}, \quad 1 \le j_1 < \ldots < j_{p+q} \le n.
$$
More generally, let $H_k = \{ h_{k,1}, \ldots, h_{k,n_k} \}$, $k=1, \ldots, m$, be $m$ increasing $(0 \le n_k \le n)$-lists.
When the lists $H_k$ are pairwise disjoint, we denote by $\tau_{H_1,\ldots, H_m}$ the permutation in the symmetric group $S_{n_1+\ldots +n_m}$ which arranges the list
$$
h_{1,1}, \ldots, h_{1,n_1}, h_{2,1}, \ldots, h_{2,n_2}, \ldots, h_{m,1}, \ldots, h_{m,n_m}
$$
into an increasing $(0 \le n_1+\ldots+n_m \le n)$-list
$$
H_1*\ldots *H_m = \{ j_1, \ldots, j_{n_1+\ldots+n_m} \}, \quad 1 \le j_1 < \ldots < j_{n_1+\ldots+n_m} \le n.
$$

\subsubsection{Basis vectors}

When $I = \{ i_1, \ldots, i_p \}$ is an increasing $(0 \le p \le n)$-list, we use the notation
$$
\overline{A}_I := \overline{a}_{i_1} \wedge \ldots \wedge \overline{a}_{i_p},
$$
where $\overline{a}_{i_1}, \ldots, \overline{a}_{i_p} \in \mathbf{M}$. The element $\overline{A}_I$ is called a \emph{$p$-vector}.

Let $\{\overline{e}_1, \ldots, \overline{e}_n\}$ be a basis of $\mathbf{M}$. For $p \in \{ 0, 1, \ldots, n \}$, consider the products
$$
\overline{E}_{\sigma_p} := \overline{e}_{i_1} \wedge \overline{e}_{i_2} \wedge \ldots \wedge \overline{e}_{i_p}, 
\quad \sigma_p =\{ i_1, i_2, \ldots, i_p \} \in C(n, p),
$$
with $\overline{E}_\varnothing = 1$. 
There are $\binom{n}{p}$ of them for each $p \in \{0, 1, \ldots, n\}$, and together they form a basis for $\bigwedge (\mathbf{M})$.

Let $H$ and $K$ be increasing $(0 \le p \le n)$- and $(0 \le q \le n)$-lists, respectively. Then
$$
\overline{E}_H \wedge \overline{E}_K =
\begin{cases}
0, &\text{if}\ H \cap K \neq \varnothing; \\
\sign (\tau_{H,K}) \overline{E}_{H*K}, &\text{if}\ H \cap K = \varnothing.
\end{cases}
$$

\subsubsection{Grassmann coordinates}

When $A$ is an $s \times t$ matrix and $H \in C(s,p)$, $K \in C(t,q)$, we denote by
$$
A_{HK} := (a_{hk}), \quad h \in H, \quad k \in K,
$$
the $p \times q$ submatrix of $A$ with rows and columns chosen according to the lists $H$ and $K$, respectively.

Let $\overline{a}_{i_1}, \ldots, \overline{a}_{i_p} \in \mathbf{M}$, where $\{ i_1, \ldots, i_p \} =: I$ is an increasing $(0 \le p \le n)$-list. 
Then
$$
\overline{a}_{i_1} \wedge \ldots \wedge \overline{a}_{i_p} = \sum_{L \in C(n,p)} \det \left(A_{L, I} \right) \overline{E}_L,
$$
where
$$
\det \left(A_{L, I} \right) = \det(a_{l,i}), \quad l \in L, \quad i \in I,
$$
is a $p \times p$ minor of the $n \times p$ matrix formed by the vectors $\overline{a}_{i_1}, \ldots, \overline{a}_{i_p}$. 
The determinants $\det \left(A_{L,I} \right)$ are the so-called \emph{Grassmann or Pl\"ucker coordinates} of the 
$p$-vector $\overline{a}_{i_1} \wedge \ldots \wedge \overline{a}_{i_p}$.

Let $I = \{ i_1, \ldots, i_p \}$ and $J = \{ j_1, \ldots, j_q \}$ be increasing $(0 \le p \le n)$- and $(0 \le q \le n)$-lists, respectively. 
Then it is immediate that 
$$
\overline{A}_I \wedge \overline{A}_J = 
\sum_{H \in C(n,p), K \in C(n,q)} \sign (\tau_{H, K}) \det \left( A_{H, I} \right) \det \left( A_{K, J} \right) \overline{E}_{H * K}.
$$
This  product of a $p$-vector and a $q$-vector has a direct generalisation given below. 

\begin{lemma}\label{productlemma2}
Let $I_k = \{ i_{k,1}, \ldots, i_{k,n_k} \}$, $k=1, \ldots, m$, be $m$ increasing $(0 \le n_k \le n)$-lists.
Then
\begin{equation*}
\bigwedge_{k=1}^m \overline{A}_{I_k} =
\sum_{H_1 \in C(n,n_1), \ldots, H_m \in C(n,n_m)} \sign (\tau_{H_1, \ldots, H_m}) \det \left( A_{H_1, I_1} \right) 
\cdots \det \left( A_{H_m, I_m} \right) \overline{E}_{H_1 * \ldots * H_m}.
\end{equation*}
\end{lemma}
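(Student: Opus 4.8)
The plan is to prove the identity by induction on the number $m$ of lists. The base case $m=1$ is nothing but the Grassmann coordinate expansion recalled above: a single increasing list is already sorted, so $\tau_{I_1}$ is the identity and $\overline{A}_{I_1}=\sum_{H_1\in C(n,n_1)}\det(A_{H_1,I_1})\overline{E}_{H_1}$. The displayed formula immediately preceding the statement is the case $m=2$, and it will reappear as the first instance of the inductive step below.

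For the inductive step, assume $m\ge 2$ and that the formula holds for $m-1$ lists. By associativity of the wedge product, $\bigwedge_{k=1}^m \overline{A}_{I_k}=\big(\bigwedge_{k=1}^{m-1}\overline{A}_{I_k}\big)\wedge\overline{A}_{I_m}$. Apply the induction hypothesis to the first factor (a sum over pairwise disjoint $H_1,\ldots,H_{m-1}$ carrying $\overline{E}_{H_1*\ldots*H_{m-1}}$) and the base case to $\overline{A}_{I_m}$ (a sum over $H_m$ carrying $\overline{E}_{H_m}$), and expand the product by bilinearity of $\wedge$. By the product rule for basis vectors, $\overline{E}_{H_1*\ldots*H_{m-1}}\wedge\overline{E}_{H_m}$ vanishes unless $H_m$ is disjoint from all of $H_1,\ldots,H_{m-1}$; hence only pairwise disjoint tuples $(H_1,\ldots,H_m)$ survive. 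Since $(H_1*\ldots*H_{m-1})*H_m=H_1*\ldots*H_m$ as underlying sets, the coefficient of $\overline{E}_{H_1*\ldots*H_m}$ in the result is
$$
\sign(\tau_{H_1,\ldots,H_{m-1}})\,\sign\big(\tau_{H_1*\ldots*H_{m-1},\,H_m}\big)\,\det(A_{H_1,I_1})\cdots\det(A_{H_m,I_m}).
$$
Thus the whole claim reduces to the sign identity $\sign(\tau_{H_1,\ldots,H_{m-1}})\,\sign(\tau_{H_1*\ldots*H_{m-1},H_m})=\sign(\tau_{H_1,\ldots,H_m})$ for pairwise disjoint $H_1,\ldots,H_m$.

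This identity is the one point that requires genuine care. The permutation $\tau_{H_1,\ldots,H_m}\in S_{n_1+\ldots+n_m}$ sorts the concatenation $h_{1,1},\ldots,h_{m,n_m}$ into increasing order, and this sorting can be carried out in two stages: first sort the initial block $h_{1,1},\ldots,h_{m-1,n_{m-1}}$ while leaving the last $n_m$ entries fixed --- the permutation effecting this is $\tau_{H_1,\ldots,H_{m-1}}$, now viewed inside $S_{n_1+\ldots+n_m}$ as acting trivially on the final $n_m$ positions --- and then merge the already sorted block $H_m$ into the already sorted block $H_1*\ldots*H_{m-1}$, which is effected by $\tau_{H_1*\ldots*H_{m-1},H_m}$. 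These two permutations compose to $\tau_{H_1,\ldots,H_m}$, and since $\sign$ is a group homomorphism their signs multiply to $\sign(\tau_{H_1,\ldots,H_m})$. Substituting this back gives exactly the right-hand side of the lemma, completing the induction.

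The main obstacle is precisely this sign bookkeeping: one must take care that $\tau_{H_1,\ldots,H_{m-1}}$ is regarded inside the larger symmetric group so that it is composable with $\tau_{H_1*\ldots*H_{m-1},H_m}$, and that the two-stage sort genuinely reproduces the one-stage sort. The remaining ingredients --- associativity and bilinearity of $\wedge$, the vanishing $\overline{E}_H\wedge\overline{E}_K=0$ for $H\cap K\ne\varnothing$, and associativity of $*$ on underlying sets --- are entirely routine.
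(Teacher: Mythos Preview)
Your inductive argument is correct; the sign identity is handled carefully and the rest is routine. The paper itself does not supply a proof of this lemma at all: it states the $m=2$ case as ``immediate'' and then simply announces the general lemma as ``a direct generalisation,'' so your write-up makes explicit precisely what the authors leave to the reader.
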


\subsection{Generalised minor expansion}\label{Section2.2}

Since the Grassmann coordinates are determinants, we may use them to prove determinant expansion formulas.

\begin{lemma}\label{manyblocksexpansion}
Let $A=(a_{ij}) \in \mathcal{M}_{n \times n}(\mathbf{R})$, where $\mathbf{R}$ is a commutative ring. 
Let $I_k = \{ i_{k,1}, \ldots, i_{k,n_k} \}$, $k=1, \ldots, m$, be $m$ increasing $(0 \le n_k \le n)$-lists such that $\sum_{k=1}^m n_k=n$ and $I_j \cap I_k = \varnothing$ when $j \neq k$. Then
\begin{equation*}
\det (A)=\sign (\tau_{I_1, \ldots, I_m}) \sum_{\substack{H_1 \in C(n,n_1), \ldots, H_m \in C(n,n_m) \\ H_i \cap H_j = \varnothing, i \neq j}} \sign(\tau_{H_1, \ldots, H_m}) \det(A_{H_1,I_1}) \cdots \det (A_{H_m,I_m}).
\end{equation*}
\end{lemma}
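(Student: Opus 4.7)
The plan is to derive this as an immediate consequence of Lemma \ref{productlemma2} applied to the columns of $A$, exploiting the fact that when $\sum_k n_k = n$, a product of $m$ multi-vectors with degrees summing to $n$ is a top-degree element, hence a scalar multiple of the single basis vector $\overline{E}_{\{1,\ldots,n\}}$.

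First I would set up notation: let $\overline{a}_1, \ldots, \overline{a}_n$ denote the column vectors of $A$ viewed as elements of the free $\mathbf{R}$-module $\mathbf{M}$ of rank $n$ with basis $\overline{e}_1, \ldots, \overline{e}_n$, so that by definition
\begin{equation*}
\overline{a}_1 \wedge \overline{a}_2 \wedge \ldots \wedge \overline{a}_n = \det(A)\,\overline{E}_{\{1,\ldots,n\}}.
\end{equation*}
For each $k$, the multi-vector $\overline{A}_{I_k} = \overline{a}_{i_{k,1}} \wedge \ldots \wedge \overline{a}_{i_{k,n_k}}$ is then well-defined in $\bigwedge(\mathbf{M})$.

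Next I would invoke Lemma \ref{productlemma2} to write
\begin{equation*}
\bigwedge_{k=1}^m \overline{A}_{I_k} = \sum_{H_1, \ldots, H_m} \sign(\tau_{H_1,\ldots,H_m}) \det(A_{H_1,I_1}) \cdots \det(A_{H_m,I_m}) \,\overline{E}_{H_1 * \ldots * H_m},
\end{equation*}
where $H_k \in C(n,n_k)$. The basis vector $\overline{E}_{H_1 * \ldots * H_m}$ vanishes (by $m \wedge m = 0$) unless the $H_k$ are pairwise disjoint; and because $\sum_k n_k = n$, any such pairwise disjoint choice forces $H_1 * \ldots * H_m = \{1, 2, \ldots, n\}$. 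Thus the right-hand side collapses to the scalar $\sum_{H_k \text{ disjoint}} \sign(\tau_{H_1,\ldots,H_m}) \det(A_{H_1,I_1}) \cdots \det(A_{H_m,I_m})$ times $\overline{E}_{\{1,\ldots,n\}}$.

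On the other hand, the permutation $\tau_{I_1,\ldots,I_m}$ rearranges the concatenation $i_{1,1}, \ldots, i_{m,n_m}$ into $1, 2, \ldots, n$ (since the $I_k$ partition $\{1,\ldots,n\}$), so applying this permutation inside the wedge product yields
\begin{equation*}
\overline{A}_{I_1} \wedge \ldots \wedge \overline{A}_{I_m} = \sign(\tau_{I_1, \ldots, I_m})\,\overline{a}_1 \wedge \ldots \wedge \overline{a}_n = \sign(\tau_{I_1, \ldots, I_m})\det(A)\,\overline{E}_{\{1,\ldots,n\}}.
\end{equation*}
Equating the coefficients of $\overline{E}_{\{1,\ldots,n\}}$ in the two expressions and multiplying through by $\sign(\tau_{I_1, \ldots, I_m})$ (which is $\pm 1$) gives the desired formula.

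The only genuine obstacle is bookkeeping of signs, specifically verifying that the rearrangement of the wedge $\overline{A}_{I_1} \wedge \ldots \wedge \overline{A}_{I_m}$ into $\overline{a}_1 \wedge \ldots \wedge \overline{a}_n$ really picks up exactly $\sign(\tau_{I_1,\ldots,I_m})$ as defined in the preliminaries, and that the restriction to disjoint $H_k$ is automatic from the wedge algebra rather than being an extra assumption. Once these sign conventions are matched, the proof reduces to reading off the coefficient of a single basis $n$-vector, so no genuine computation is required beyond Lemma \ref{productlemma2}.
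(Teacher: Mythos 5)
Your proposal is correct and follows essentially the same route as the paper: compute $\bigwedge_{k=1}^m \overline{A}_{I_k}$ in two ways, once directly as $\sign(\tau_{I_1,\ldots,I_m})\det(A)\,\overline{E}_{\{1,\ldots,n\}}$ and once via Lemma \ref{productlemma2}, then equate the coefficients of the top basis vector. The only cosmetic difference is that you rearrange the columns before invoking the determinant identity on the top exterior power, whereas the paper writes $\overline{a}_i = A\overline{e}_i$ and pulls out $\det(A)$ first; the content is identical.
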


\begin{proof}
First
\begin{equation*}
\begin{split}
\bigwedge_{k=1}^m \left( \overline{a}_{i_{k,1}} \wedge \ldots \wedge \overline{a}_{i_{k,n_k}} \right) &= \bigwedge_{k=1}^m \left(A \overline{e}_{i_{k,1}} \wedge \ldots \wedge A \overline{e}_{i_{k,n_k}} \right) \\
&= \det (A) \cdot \bigwedge_{k=1}^m \left( \overline{e}_{i_{k,1}} \wedge \ldots \wedge \overline{e}_{i_{k,n_k}} \right) \\
&= \det (A) \sign (\tau_{I_1, \ldots, I_m}) \cdot \overline{e}_1 \wedge \ldots \wedge \overline{e}_n.
\end{split}
\end{equation*}
On the other hand, Lemma \ref{productlemma2} implies
\begin{multline*}
\bigwedge_{k=1}^m \left( \overline{a}_{i_{k,1}} \wedge \ldots \wedge \overline{a}_{i_{k,n_k}} \right) = \\
\sum_{\substack{H_1 \in C(n,n_1), \ldots,\\ H_m \in C(n,n_m)}} \sign (\tau_{H_1, \ldots, H_m}) \det \left( A_{H_1, I_1} \right) \cdots \det \left( A_{H_m, I_m} \right) \overline{E}_{H_1 * \cdots * H_m}.
\end{multline*}
Since $\sum_{k=1}^m n_k=n$, we have $\overline{E}_{H_1 * \cdots * H_m} = \overline{e}_1 \wedge \ldots \wedge \overline{e}_n$ for all 
$H_k \in C(n,n_k)$, $k=1, \ldots, m$, such that $H_i \cap H_j = \varnothing$ when $i \neq j$. Hence
$$
\det (A) \sign (\tau_{I_1, \ldots, I_m}) = \sum_{\substack{H_1 \in C(n,n_1), \ldots, H_m \in C(n,n_m) \\ H_i \cap H_j = \varnothing, i \neq j}} \sign(\tau_{H_1, \ldots, H_m}) \det(A_{H_1,I_1}) \cdots \det (A_{H_m,I_m}),
$$
and multiplication by $\sign (\tau_{I_1, \ldots, I_m})$ proves the claim.
\end{proof}

\subsection{Polynomial rings}

We are about to prove the existence of high order factors in generalised Vandermonde-type polynomial block determinants. 
For the proofs to work, we need to study the determinants inside a polynomial ring. 
If the ring of coefficients is not a field, then the division algorithm is not available, but luckily the following well-known lemma over a field  is valid also over any integral domain of characteristic zero.

\begin{lemma}\label{derivaattalemma}
Let $\mathbf{I}$ be an integral domain of characteristic zero, 
$P(x) \in \mathbf{I}[x]$, $x_0 \in \mathbf{I}$ and $n \in \Z_{\ge 1}$. If
\begin{equation}\label{derivaattaehto2}
P^{(k)} (x_0) = 0 \quad \text{for all} \ k \in \{0, 1, \ldots, n-1\},
\end{equation}
then
$$
(x-x_0)^n \underset{\mathbf{I}[x]}{\bigg|} P(x).
$$
\end{lemma}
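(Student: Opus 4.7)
The plan is to carry out a Taylor-type expansion of $P$ around $x_0$ inside $\mathbf{I}[x]$ itself, and then read off the first $n$ coefficients from the vanishing derivatives. The only subtle point is that $\mathbf{I}$ is not assumed to contain the rationals, so we cannot literally divide by $k!$; the characteristic-zero hypothesis together with the integral-domain property will substitute for that division.

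First I would write
\[
P(x) = \sum_{k=0}^{d} b_k (x-x_0)^k, \qquad b_k \in \mathbf{I}, \quad d = \deg P.
\]
Such an expansion exists and has coefficients in $\mathbf{I}$ because $x-x_0$ is monic: repeated polynomial division of $P(x)$ by $x-x_0$ can be performed entirely within $\mathbf{I}[x]$ (no inversion of a leading coefficient is ever needed), and the successive remainders are precisely the $b_k$.

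Next I would differentiate this expansion $k$ times formally in $\mathbf{I}[x]$ and evaluate at $x=x_0$, obtaining $P^{(k)}(x_0) = k!\, b_k$ for every $k$. The hypothesis \eqref{derivaattaehto2} then gives $k!\, b_k = 0$ in $\mathbf{I}$ for $k=0,1,\ldots,n-1$. Because $\mathbf{I}$ has characteristic zero, the element $k! = 1 + 1 + \cdots + 1$ is non-zero in $\mathbf{I}$, so the integral-domain axiom forces $b_k = 0$ for all $k < n$. The expansion therefore collapses to
\[
P(x) = (x-x_0)^n \sum_{k=n}^{d} b_k (x-x_0)^{k-n},
\]
exhibiting $(x-x_0)^n$ as a divisor of $P(x)$ in $\mathbf{I}[x]$.

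The only step that requires any care is the very first one, namely producing an expansion whose coefficients live in $\mathbf{I}$ rather than in the fraction field $\operatorname{Frac}(\mathbf{I})$; this is handled cleanly by the monicness of $x-x_0$. Everything else is formal manipulation, and the characteristic-zero assumption enters exactly at the one place where it is needed, to cancel the factorials coming from differentiation.
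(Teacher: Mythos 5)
Your proof is correct and takes essentially the same approach as the paper: both expand $P$ as a polynomial in $(x-x_0)$ with coefficients in $\mathbf{I}$ (you via repeated monic division, the paper via the binomial expansion of $P(x-x_0+x_0)$) and then deduce that the hypothesis forces the first $n$ coefficients to vanish. You merely make explicit the step the paper leaves implicit, namely that $P^{(k)}(x_0)=k!\,b_k$ and that characteristic zero together with the absence of zero divisors allows one to cancel $k!$.
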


\begin{proof}
Using the binomial expansion we may write
$$
P (x) = P (x - x_0+ x_0) = a_0 + a_1 (x - x_0) + \ldots + a_d (x - x_0)^d,
$$
where
$$
a_i \in \mathbf{I}, \quad i = 0, 1, \ldots, d, \quad d := \deg P(x).
$$
Now condition \eqref{derivaattaehto2} implies $a_0 = a_1 = \ldots = a_{n-1} = 0$, so
\begin{equation*}
(x - x_0)^n \underset{\mathbf{I}[x]}{\bigg|} P(x).
\end{equation*}
\end{proof}

\section{Generalised Vandermonde-type polynomial block matrices}\label{sec:vandermonde}

Let $\mathbf{D}$ be an integral domain of characteristic zero. 
In the following we shall work in the polynomial ring denoted by 
$\mathbf{D}[x_1,\ldots,x_m]$ which is a free commutative $\mathbf{D}$-algebra generated by $\{x_1,\ldots,x_m\}$.
(As usual, the elements $x_1,\ldots,x_m$ are called indeterminates or variables.)
Note also that if $\mathbf{D}$ is a UFD, then $\mathbf{D}[x_1,\ldots,x_m]$ is also UFD, where $x_1,\ldots,x_m$ are prime elements.
Further, a polynomial $P \in \mathbf{D}[x_1, \ldots, x_m]$ is called \emph{primitive}, if the $\gcd$ of its coefficients is $1$.
By setting $\deg 0(x):=-\infty$ for the zero-polynomial $0(x)\in\mathbf{D}[x]$,
the degree formula $\deg (a(x)b(x))= \deg a(x) + \deg b(x)$ holds for all single variable polynomials $a(x), b(x) \in \mathbf{D}[x]$.

We shall use the notation $(x_1, \ldots, \hat{x}_i, \ldots, x_m)$ for the $(m-1)$-tuple where the component $x_i$ has been left out.
Before going further, let us recall the definition of multinomials:
$$
\binom{k}{k_1, \ldots, k_m}:=\frac{k!}{k_1!\cdots k_m!},\quad k_1+\ldots +k_m=k,\quad k_1,\ldots, k_m,k\in \mathbb{Z}_{\ge 0}.
$$

Let $m\in \Z_{\ge 1}$. Pick then $m$ positive integers $n_1, n_2, \ldots, n_m \in \Z_{\ge 1}$ and set $n:=n_1 + n_2 + \ldots + n_m$.
Let $p_0(x), p_1(x), \ldots, p_{n-1}(x) \in \mathbf{D}[x]$ be $n$ single variable polynomials.

\subsection{Case A}

Our method is based on Lemma \ref{derivaattalemma} along with the idea already used e.g. by Flowe and Harris in \cite{FloweHarris}.
However, no combinatorial argument will be needed in our proof, and there are no restrictions on the polynomials.

Denote
$$
A_j :=
\begin{pmatrix}
p_0 (x_j) & p_1 (x_j) & \cdots & p_{n-1} (x_j) \\
p_0' (x_j) & p_1' (x_j) & \cdots & p_{n-1}' (x_j) \\
\vdots & \vdots & \ddots & \vdots \\
p_0^{(n_j -1)} (x_j) & p_1^{(n_j -1)} (x_j) & \cdots & p_{n-1}^{(n_j -1)} (x_j)
\end{pmatrix}_{\!n_j \times n},
\quad j=1, \ldots, m,
$$
and let
\begin{equation*}\label{polymatrix}
\mathcal{A} :=
\begin{pmatrix}
A_1 \\
A_2 \\
\vdots \\
A_m
\end{pmatrix}_{\!n \times n}.
\end{equation*}

\begin{lemma}\label{blockpolyvande}
With the above notation, we have
$$
\prod_{1 \le i < j \le m} (x_i - x_j)^{n_i n_j} \underset{\mathbf{D}[x_1, \ldots, x_m]}{\bigg|} \det \mathcal{A}.
$$
\end{lemma}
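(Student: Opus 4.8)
The plan is to show that each factor $(x_i - x_j)^{n_i n_j}$ divides $\det\mathcal{A}$ separately in the UFD $\mathbf{D}[x_1,\ldots,x_m]$, and then combine them. Since the distinct linear polynomials $x_i - x_j$ (for $i<j$) are pairwise non-associate primes in $\mathbf{D}[x_1,\ldots,x_m]$, their powers $(x_i-x_j)^{n_i n_j}$ are pairwise coprime, so establishing each divisibility individually yields the product divisibility by unique factorization. Fix therefore a pair $i<j$. The idea is to treat $\det\mathcal{A}$ as a polynomial in the single variable $x_i$ with coefficients in the integral domain $\mathbf{D}[x_1,\ldots,\hat{x}_i,\ldots,x_m]$, specialize $x_i = x_j$, and invoke Lemma \ref{derivaattalemma}: it suffices to prove that all partial derivatives $\partial^k/\partial x_i^k \bigl(\det\mathcal{A}\bigr)$ vanish for $k = 0, 1, \ldots, n_i n_j - 1$ when $x_i$ is set equal to $x_j$.

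The heart of the argument is a bookkeeping one. Differentiating $\det\mathcal{A}$ $k$ times with respect to $x_i$ and using multilinearity of the determinant in its rows: only the $n_i$ rows coming from the block $A_i$ depend on $x_i$, and the $r$th such row is $\bigl(p_0^{(r)}(x_i),\ldots,p_{n-1}^{(r)}(x_i)\bigr)$, whose $s$-fold $x_i$-derivative is $\bigl(p_0^{(r+s)}(x_i),\ldots,p_{n-1}^{(r+s)}(x_i)\bigr)$. By the Leibniz rule for differentiating a determinant, $\partial^k/\partial x_i^k\,\det\mathcal{A}$ is a sum, over all ways of distributing $k$ derivatives among the $n_i$ rows of $A_i$ (say $s_1 + \cdots + s_{n_i} = k$ with $s_\ell \ge 0$), of multinomial-weighted determinants in which the $\ell$th row of the $A_i$-block has been replaced by $\bigl(p_0^{(\ell-1+s_\ell)}(x_i),\ldots\bigr)$. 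Now specialize $x_i = x_j$. In such a specialized determinant the $A_i$-block rows become $\bigl(p_0^{(\ell-1+s_\ell)}(x_j),\ldots\bigr)$ for $\ell = 1,\ldots,n_i$, while the $A_j$-block rows are $\bigl(p_0^{(\ell'-1)}(x_j),\ldots\bigr)$ for $\ell' = 1,\ldots,n_j$. The determinant vanishes whenever two of these $n_i + n_j$ rows coincide, i.e. whenever the multiset of "derivative orders" $\{\ell-1+s_\ell : 1\le\ell\le n_i\}$ and $\{0,1,\ldots,n_j-1\}$ are not disjoint. So the term survives only if the $n_i$ orders $\ell - 1 + s_\ell$ are all $\ge n_j$ (and pairwise distinct), which forces $s_\ell \ge n_j - \ell + 1$, hence $k = \sum_\ell s_\ell \ge \sum_{\ell=1}^{n_i}(n_j - \ell + 1) = n_i n_j - \binom{n_i}{2}$.

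That last bound is $n_i n_j - \binom{n_i}{2}$, which is weaker than the claimed threshold $n_i n_j$, so the naive row-coincidence count is not quite enough — and this is the main obstacle. To close the gap I would exploit a symmetry: the same computation performed by distributing derivatives differently, or equivalently by symmetrizing in the roles of the two blocks, shows the surviving-term constraint must hold simultaneously "from both ends." Concretely, one can instead expand using the Cauchy–Binet / generalized Laplace expansion of Lemma \ref{manyblocksexpansion} along the row-blocks: write $\det\mathcal{A}$ as a signed sum over column-set partitions $H_1 * \cdots * H_m$ of products $\det(\mathcal{A}_{H_1,*})\cdots$, differentiate, and observe that after setting $x_i = x_j$ the combined $(n_i + n_j)$-row minor built from blocks $i$ and $j$ on any fixed column set of size $n_i + n_j$ is itself a Wronskian-type determinant that is divisible by $(x_i - x_j)^{n_i n_j}$ — this is the genuinely known Wronskian/confluent-Vandermonde fact that a determinant of $\bigl(p_c^{(r)}(x)\bigr)$-type rows, with the derivative orders running $0,\ldots,n_i-1$ at $x_i$ and $0,\ldots,n_j-1$ at $x_j$, acquires a zero of order exactly $n_i n_j$ as $x_i \to x_j$. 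Granting that local statement blockwise and multiplying through the Laplace expansion gives $(x_i - x_j)^{n_i n_j} \mid \det\mathcal{A}$. I expect the cleanest writeup to prove the two-block Wronskian divisibility first as a standalone sublemma (again via Lemma \ref{derivaattalemma}, now with the correct count because only two blocks interact), and then feed it into Lemma \ref{manyblocksexpansion}; the routine part is the derivative bookkeeping, and the only subtlety is making sure the Leibniz expansion and the block Laplace expansion are compatible after specialization.
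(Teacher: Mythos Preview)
Your setup and overall strategy are exactly those of the paper: fix a pair $i<j$, view $\det\mathcal{A}$ as a polynomial in $x_i$, apply the Leibniz rule rowwise, specialize $x_i=x_j$, and invoke Lemma~\ref{derivaattalemma}. The gap is not in the method but in your counting argument. You correctly observe that a term in the Leibniz expansion survives at $x_i=x_j$ only if the $n_i$ derivative orders $\ell-1+s_\ell$ ($\ell=1,\ldots,n_i$) are all $\ge n_j$ \emph{and pairwise distinct}. You write down both conditions, but then you use only the first one: from $\ell-1+s_\ell\ge n_j$ alone you get $s_\ell\ge n_j-\ell+1$ and hence the weak bound $k\ge n_in_j-\binom{n_i}{2}$. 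Use both: if the $n_i$ integers $\ell-1+s_\ell$ are pairwise distinct and each $\ge n_j$, then their sum is at least the sum of the $n_i$ smallest such integers, namely
\[
\sum_{\ell=1}^{n_i}(\ell-1+s_\ell)\;\ge\;\sum_{t=0}^{n_i-1}(n_j+t)\;=\;n_in_j+\binom{n_i}{2}.
\]
Subtracting $\sum_{\ell=1}^{n_i}(\ell-1)=\binom{n_i}{2}$ from the left gives $k=\sum_\ell s_\ell\ge n_in_j$, which is exactly the threshold you need. This is precisely how the paper closes the argument.

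Your proposed workaround via the block Laplace expansion and a ``two-block Wronskian sublemma'' is therefore unnecessary. It is also somewhat circular: the ``genuinely known'' confluent-Vandermonde fact you would invoke for two blocks is nothing other than the $m=2$ case of the present lemma, proved by the very counting argument above. Fix the count and your proof is complete as written, with no need for Lemma~\ref{manyblocksexpansion} at this stage.
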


\begin{proof}
Let us write
\begin{equation}\label{rivi}
\overline{r}^{(k)} (x) :=
\begin{pmatrix}
\left( \frac{\mathrm{d}}{\mathrm{d}x} \right)^k p_0 (x) & \left( \frac{\mathrm{d}}{\mathrm{d}x} \right)^k p_1 (x) 
& \cdots & \left( \frac{\mathrm{d}}{\mathrm{d}x} \right)^k p_{n-1} (x)
\end{pmatrix}^T,
\quad \overline{r}_j^{(k)} := \overline{r}^{(k)} (x_j),
\end{equation}
for $k \in \Z_{\ge 0}$, so that
$$
\det \mathcal{A} = \det
\begin{bmatrix}
\overline{r}_1 & \overline{r}_1' & \cdots & \overline{r}_1^{(n_1 - 1)} & \cdots & \overline{r}_m & \overline{r}_m' & \cdots 
& \overline{r}_m^{(n_m - 1)}
\end{bmatrix}.
$$
(We use the transpose of $\mathcal{A}$ only to save some space.) 
Let $i \in \{ 1, \ldots, m \}$. We start by considering the determinant $\det \mathcal{A}$ as a polynomial in $x_i$ with other variables as coefficients, i.e.
$\det \mathcal{A} =: P (x_i) \in \mathbf{D}[x_1, \ldots, \hat{x}_i, \ldots, x_m][x_i]$. 
In the following we use the shorthand notation $\mathrm{D}_i := \frac{\mathrm{d}}{\mathrm{d}x_i}$.

According to the general Leibniz rule for derivatives,
\begin{multline}\label{Leibnizderiv}
P^{(k)}(x_i)=\mathrm{D}_i^k \det \mathcal{A} = 
\sum_{k_{1,1}+\ldots+k_{1,n_1}+k_{2,1}+\ldots+k_{m,n_m}=k} \binom{k}{k_{1,1}, \ldots, k_{m,n_m}} \cdot \\
\det
\begin{bmatrix}
\mathrm{D}_i^{k_{1,1}} \overline{r}_1 & \mathrm{D}_i^{k_{1,2}} \overline{r}_1' & \cdots & \mathrm{D}_i^{k_{1,n_1}} \overline{r}_1^{(n_1 - 1)} 
& \cdots & \mathrm{D}_i^{k_{m,1}} \overline{r}_m & \mathrm{D}_i^{k_{m,2}} \overline{r}_m' & \cdots 
& \mathrm{D}_i^{k_{m,n_m}} \overline{r}_m^{(n_m - 1)}
\end{bmatrix} \\
= \sum_{k_{i,1}+k_{i,2}+\ldots+k_{i,n_i}=k} \binom{k}{0,\ldots,0,k_{i,1}, \ldots, k_{i,n_i}, 0, \ldots, 0} 
\cdot \Delta(k_{i,1},k_{i,2},\ldots,k_{i,n_i})(x_i),
\end{multline}
where
$$
\Delta(k_{i,1},k_{i,2},\ldots,k_{i,n_i})(x_i):= \det
\begin{bmatrix}
* & \overline{r}^{(k_{i,1})}(x_i) & \overline{r}^{(k_{i,2}+1)}(x_i) & \ldots &  \overline{r}^{(k_{i,n_i}+n_i-1)}(x_i)& * 
\end{bmatrix}. 
$$
(The star symbol $(*)$ denotes the rest of the blocks which are unchanged.)
Now we evaluate the derivative $P^{(k)}$ at $x_j$, $j \neq i$.
First we see that
$$
\Delta(k_{i,1},k_{i,2},\ldots,k_{i,n_i})(x_j)=0
$$
if
\begin{equation*}
\begin{cases}
k_{i,s}+s-1=k_{i,t}+t-1,\quad s\ne t,\quad s,t\in\{1,\ldots,n_i\};\\
\text{or}\\
\text{there exists an}\ s\in\{1,\ldots,n_i\}\ \text{such that}\  k_{i,s}+s-1\le n_j-1.
\end{cases}
\end{equation*}
Therefore, supposing 
\begin{equation*}
\Delta(k_{i,1},k_{i,2},\ldots,k_{i,n_i})(x_j)\ne 0
\end{equation*}
implies that the numbers $k_{i,1},k_{i,2}+1,\ldots,k_{i,n_i}+n_i-1$ are distinct and strictly greater than $n_j-1$.
Hence
\begin{equation*}
\sum_{s=1}^{n_i} (k_{i,s}+s-1) \ge \sum_{t=0}^{n_i-1} (n_j+t),
\end{equation*}
which gives
\begin{equation*}
k=\sum_{s=1}^{n_i} k_{i,s} \ge \sum_{t=0}^{n_i-1} n_j = n_i n_j.
\end{equation*}

Thus
\begin{equation*}\label{derivaattaehto1}
P^{(k)} (x_j) = 0, \quad k= 0, 1, \ldots, n_i n_j -1, \quad j \in \{1, \ldots, m \} \setminus \{i\}.
\end{equation*}
Applying Lemma \ref{derivaattalemma} with $\mathbf{I} = \mathbf{D} [x_1, \ldots, \hat{x}_i,\ldots, x_m]$, we get
\begin{equation*}
(x_i - x_j)^{n_i n_j} \underset{\mathbf{D} [x_1, \ldots, \hat{x}_i,\ldots, x_m][x_i]}{\bigg|} P (x_i), \quad  j \in \{1, \ldots, m \} \setminus \{i\}.
\end{equation*}
Noting that the elements $x_g-x_h$ and $x_i-x_j$ are relatively prime when $(g,h)\ne (i,j)$, we arrive at the common factor
$$
\prod_{1  \le i < j \le m} (x_i - x_j)^{n_i n_j} \underset{\mathbf{D} [x_1, \ldots, x_m]}{\bigg|} \det \mathcal{A}.
$$
\end{proof}

\begin{corollary}\label{polyvande}
If $\deg p_i = i$, then
$$
\det \mathcal{A} = F \cdot \prod_{1  \le i < j \le m} (x_i - x_j)^{n_i n_j}
$$
for some $F \in \mathbf{D}$.
\end{corollary}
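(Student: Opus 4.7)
The plan is to combine the divisibility result from Lemma \ref{blockpolyvande} with a degree count, showing that the Vandermonde-type product accounts for all of the variables in $\det \mathcal{A}$, so that the quotient has no indeterminates left.

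First, by Lemma \ref{blockpolyvande} we may write
$$
\det \mathcal{A} = F \cdot \prod_{1 \le i < j \le m}(x_i - x_j)^{n_i n_j}
$$
for some $F \in \mathbf{D}[x_1, \ldots, x_m]$, since $\mathbf{D}[x_1, \ldots, x_m]$ is an integral domain. The goal is to show $F \in \mathbf{D}$, which I would do by bounding $\deg_{x_j} F \le 0$ for each $j \in \{1, \ldots, m\}$.

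Next, I would compute $\deg_{x_j} \det \mathcal{A}$ using the permutation expansion. Only the rows belonging to the block $A_j$ depend on $x_j$, and row $s$ (for $s = 0, 1, \ldots, n_j - 1$) of $A_j$ has entries $p_k^{(s)}(x_j)$ whose degree in $x_j$ is $\max\{0, k - s\}$, because $\deg p_k = k$. For any single term in the determinant expansion, the columns $k_0, k_1, \ldots, k_{n_j - 1}$ selected from the rows of $A_j$ must be distinct elements of $\{0, 1, \ldots, n-1\}$, so the contribution to $\deg_{x_j}$ is at most
$$
\sum_{s=0}^{n_j - 1} (k_s - s) \le \Bigl( \sum_{k = n - n_j}^{n-1} k \Bigr) - \binom{n_j}{2} = n_j(n - n_j),
$$
with equality only if $\{k_0, \ldots, k_{n_j - 1}\}$ are the top $n_j$ columns. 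Hence $\deg_{x_j} \det \mathcal{A} \le n_j(n - n_j)$.

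On the other hand, the Vandermonde-type factor has
$$
\deg_{x_j} \prod_{1 \le i < j \le m}(x_i - x_j)^{n_i n_j} = \sum_{i \ne j} n_i n_j = n_j (n - n_j).
$$
Comparing these two quantities forces $\deg_{x_j} F \le 0$ for every $j$, so $F$ is independent of all the indeterminates, i.e., $F \in \mathbf{D}$. The main (and really the only) subtle step is the degree-in-$x_j$ bound for the determinant: one must be careful to note that, although the columns chosen in block $A_j$ interact via distinctness with the other blocks, this only strengthens the bound, since an overall upper estimate on $\deg_{x_j}$ of a single term is obtained simply by allowing the $n_j$ columns in $A_j$ to range freely over distinct values in $\{0, 1, \ldots, n-1\}$.
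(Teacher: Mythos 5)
Your proof is correct and follows essentially the same route as the paper: start from Lemma \ref{blockpolyvande}, bound $\deg_{x_j}\det\mathcal{A}$ from above via the permutation expansion using $\deg p_k^{(s)} = k-s$ (with the zero-polynomial cases handled by vanishing terms), compare with $\deg_{x_j}$ of the Vandermonde-type product, and conclude $\deg_{x_j}F\le 0$ for every $j$. The only difference is purely notational (you index columns directly by $k_s$ where the paper writes the column index as $n-w_i$); the content is identical.
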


\begin{proof}
Lemma \ref{blockpolyvande} shows that
$$
\det \mathcal{A} = F \cdot \prod_{1 \le i < j \le m} (x_i - x_j)^{n_i n_j}  
$$
for some $F\in\mathbf{D}[x_1, \ldots, x_m]$.
Again, we shall consider $\det \mathcal{A}$  as a polynomial $P(x_k)$ in $x_k$ for some $k \in \{1, \ldots,m\}$.
The degrees of the entries in block $A_k$ form the following matrix:
\begin{equation*}\label{degblock}
\begin{pmatrix}
0 & 1 & 2 &\cdots & n-2 & n-1 \\
- \infty & 0 & 1 & \cdots & n-3 & n-2 \\
- \infty & - \infty & 0 & \cdots & n-4 & n-3 \\
\vdots & \vdots & \vdots & \ddots & \vdots & \vdots \\
- \infty & - \infty & - \infty & \cdots & n-n_k & n-(n_k-1) \\
- \infty & - \infty & - \infty & \cdots & n-(n_k +1) & n-n_k
\end{pmatrix}_{n_k \times n}.
\end{equation*}
Above an element in position $(v,n-w)$ is of the form
$$
n-v-w, \quad v \in \{1, \ldots, n_k\}, \quad w \in \{0,1, \ldots, n-1\},
$$
where a negative number corresponds to a zero polynomial of the lower left corner.
When $\det \mathcal{A}$ is expanded using the Leibniz formula, the degree of a non-zero term is a sum
$$
\sum_{\substack{i=1\\n-i-w_i \ge 0}}^{n_k} (n-i-w_i), \quad w_i \in \{0,1, \ldots, n-1 \},
$$
where the numbers $w_i$ are pairwise distinct, so that $\sum_{i=1}^{n_k} w_i \ge \sum_{i=0}^{n_k-1} i$.
Therefore we get the upper bound
\begin{equation}\label{degPxiestimate}
\deg_{x_k} P(x_k) \le n_k n - \sum_{i=1}^{n_k} i - \sum_{i=0}^{n_k-1} i = n_k (n-n_k).
\end{equation}
On the other hand, it is easy to see that
$$
\deg_{x_k} \left( \prod_{1  \le i < j \le m} (x_i - x_j)^{n_i n_j} \right) = \sum_{\substack{j=1\\j \neq k}}^m n_k n_j = n_k (n-n_k).
$$
The degree formula and estimate \eqref{degPxiestimate} give
$$
\deg_{x_k} \det \mathcal{A} = \deg_{x_k} F + n_k (n-n_k) =\deg_{x_k} P(x_k) \le  n_k (n-n_k),
$$
implying $\deg_{x_k} F \le 0$.
Hence  
$$\det \mathcal{A} = F \cdot \prod_{1  \le i < j \le m} (x_i - x_j)^{n_i n_j},$$
where $F$ does not depend on $x_k$ with an arbitrary $k$. Thus $F \in \mathbf{D}$.
\end{proof}

Corollary \ref{polyvande} is a generalisation of the well-known polynomial Vandermonde matrix (see Krattenthaler \cite[Proposition 1]{Krattenthaler}):

\begin{corollary}\label{Kratt}
If $\deg p_i = i$, $i \in \{0, 1, \ldots, n-1\}$, and $n_j =1$ for all $j \in \{ 1, \ldots, m\}$, then
$$
\det \mathcal{A} = a_{0,0} a_{1,1} \cdots a_{n-1,n-1} \prod_{1 \le i < j \le m} (x_j - x_i),
$$
where $a_{i,i}$ are the leading coefficients of the polynomials $p_i$.
\end{corollary}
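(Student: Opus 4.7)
The plan is to reduce the claim to the classical Vandermonde determinant identity via a clean matrix factorisation. With $n_j = 1$ for all $j \in \{1,\ldots,m\}$ one has $n=m$, and the matrix $\mathcal{A}$ collapses to the ordinary $m \times m$ matrix with entries $\mathcal{A}_{ij} = p_{j-1}(x_i)$ (no derivatives appear).

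First I would expand each polynomial as $p_j(x) = \sum_{k=0}^{j} a_{j,k} x^k$ (where $a_{j,j} \ne 0$ by the degree hypothesis) and substitute into the entries of $\mathcal{A}$. A direct bookkeeping yields the factorisation $\mathcal{A} = V\,T$, where
\begin{equation*}
V = (x_i^{j-1})_{1 \le i,j \le m}
\end{equation*}
is the classical Vandermonde matrix and $T$ is the $m \times m$ upper-triangular matrix with $T_{kj} = a_{j-1,k-1}$ for $1 \le k \le j$ and $T_{kj} = 0$ otherwise. In particular, the diagonal of $T$ is $(a_{0,0}, a_{1,1}, \ldots, a_{m-1,m-1})$.

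Taking determinants and using multiplicativity, the result follows from two standard facts: $\det T = a_{0,0} a_{1,1} \cdots a_{m-1,m-1}$ (triangularity) and $\det V = \prod_{1 \le i<j \le m}(x_j - x_i)$ (classical Vandermonde, cited in the statement). I do not expect any real obstacle here, since Corollary \ref{polyvande} already supplies the Vandermonde factor up to a constant, and the factorisation makes the constant transparent.

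As an alternative that leans directly on Corollary \ref{polyvande}, one could instead write $\det \mathcal{A} = F \cdot \prod_{i<j}(x_i-x_j)$ with $F \in \mathbf{D}$ and pin down $F$ by extracting the coefficient of the monomial $x_1^{0} x_2^{1} \cdots x_m^{m-1}$ on both sides. On the left, in the Leibniz expansion $\sum_{\sigma} \sign(\sigma) \prod_i p_{\sigma(i)-1}(x_i)$, the factor $p_{\sigma(i)-1}(x_i)$ has degree $\sigma(i)-1$ in $x_i$, so contributing the monomial $x_i^{i-1}$ for every $i$ forces $\sigma(i) \ge i$ for all $i$, i.e.\ $\sigma = \mathrm{id}$; the contribution is precisely $a_{0,0} a_{1,1}\cdots a_{m-1,m-1}$. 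On the right, the coefficient of the same monomial in $\prod_{i<j}(x_i-x_j) = (-1)^{\binom{m}{2}} \prod_{i<j}(x_j-x_i)$ is $(-1)^{\binom{m}{2}}$ by the classical Vandermonde expansion, and a sign comparison recovers the stated formula.
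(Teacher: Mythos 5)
Your proof is correct, and both routes you sketch are sound. Worth noting: the paper itself gives no proof of this corollary — it states it as the classical polynomial Vandermonde and simply cites Krattenthaler, so you are filling in a gap rather than matching an existing argument.

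Your two approaches differ in spirit. The $\mathcal{A} = VT$ factorisation is entirely self-contained: the degree hypothesis makes the change-of-basis matrix $T$ (sending the power basis to $\{p_0,\ldots,p_{m-1}\}$) upper triangular with diagonal $a_{0,0},\ldots,a_{m-1,m-1}$, and multiplicativity of the determinant does the rest. It makes no use of Lemma \ref{blockpolyvande} or Corollary \ref{polyvande} and is the cleanest standalone proof. The second route — invoking Corollary \ref{polyvande} to write $\det\mathcal{A} = F\prod_{i<j}(x_i-x_j)$ and pinning down $F$ by comparing the coefficient of $x_1^0 x_2^1\cdots x_m^{m-1}$ — is the one that actually fits the logical structure of the paper, since Corollary \ref{Kratt} is presented there as a specialisation of Corollary \ref{polyvande}. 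Your sign bookkeeping in that second route is also correct: the identity permutation is forced because $\sigma(i)\ge i$ for all $i$ implies $\sigma = \mathrm{id}$, giving $F\cdot(-1)^{\binom{m}{2}} = a_{0,0}\cdots a_{m-1,m-1}$, which reassembles into the stated formula with $\prod_{i<j}(x_j-x_i)$. Either proof would serve; the factorisation is preferable if one wants this corollary to be usable independently (as the paper in fact later uses it, e.g.\ in the proof of Lemma \ref{ranklemma}), while the coefficient-comparison proof illustrates the leading-coefficient technique the paper relies on repeatedly.
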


Flowe and Harris' \cite{FloweHarris} Theorem 1.1 is a special case of Corollary \ref{polyvande}, too. 
In some cases it is possible to compute the constant $F$.
This was done by Flowe and Harris \cite{FloweHarris} in their case, 
where the polynomials $p_i(x)$ were powers $x^i$ for $i=0,1, \ldots, n-1$.
Later we shall compute $F$ in a different case (see Lemma \ref{ranklemma}).

\subsection{Case B}

In Lemma \ref{blockpolyvande} we studied a block matrix in which each block starts with the same row of polynomials which are then differentiated so that the last rows of the blocks are not necessarily the same (in terms of the polynomials). 

Now we look at a matrix where rows are created by differentiating to the other direction, starting from the last row of each block. Again the first rows are the same, but last rows may differ between blocks.

Let $n_{\max} = \max_{1 \le i \le m} \{ n_i \}$. Denote
$$
B_j :=
\begin{pmatrix}
p_0^{(n_{\max} -1)} (x_j) & p_1^{(n_{\max} -1)} (x_j) & \cdots & p_{n-1}^{(n_{\max} -1)} (x_j) \\
p_0^{(n_{\max} -2)} (x_j) & p_1^{(n_{\max} -2)} (x_j) & \cdots & p_{n-1}^{(n_{\max} -2)} (x_j) \\
\vdots & \vdots & \ddots & \vdots \\
p_0^{(n_{\max}-n_j)} (x_j) & p_1^{(n_{\max}-n_j)} (x_j) & \cdots & p_{n-1}^{(n_{\max}-n_j)} (x_j) 
\end{pmatrix}_{n_j \times n},
\quad j=1, \ldots, m,
$$
and let
\begin{equation*}
\mathcal{B} :=
\begin{pmatrix}
B_1 \\
B_2 \\
\vdots \\
B_m
\end{pmatrix}_{n \times n}.
\end{equation*}

\begin{lemma}\label{blockpolyvande2}
With the above notation, we have
$$
\prod_{1 \le i < j \le m} (x_i - x_j)^{\min \left\{ n_i^2,\, n_j^2 \right\}} \underset{\mathbf{D} [x_1, \ldots, x_m]}{\bigg|} \det \mathcal{B} .
$$
\end{lemma}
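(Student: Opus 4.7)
The plan is to mirror the proof of Lemma \ref{blockpolyvande}: fix a pair $i\ne j$, regard $\det\mathcal{B}=:P(x_i)\in\mathbf{D}[x_1,\ldots,\hat{x}_i,\ldots,x_m][x_i]$, and establish that $P^{(k)}(x_j)=0$ for all $k<\min\{n_i^2,n_j^2\}$. Lemma \ref{derivaattalemma} will then yield $(x_i-x_j)^{\min\{n_i^2,n_j^2\}}\mid P(x_i)$, and aggregating over all pairs via the coprimality of the prime elements $x_i-x_j$ in the UFD $\mathbf{D}[x_1,\ldots,x_m]$ gives the stated divisibility.

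The Leibniz step is structurally identical to \eqref{Leibnizderiv}: since only rows of $B_i$ involve $x_i$, the only surviving multi-indices have $k_{u,v}=0$ for $u\ne i$, yielding
\[
P^{(k)}(x_i)=\sum_{k_{i,1}+\ldots+k_{i,n_i}=k}\binom{k}{0,\ldots,k_{i,1},\ldots,k_{i,n_i},\ldots,0}\,\widetilde{\Delta}(k_{i,1},\ldots,k_{i,n_i})(x_i),
\]
where $\widetilde{\Delta}$ is obtained from $\det\mathcal{B}$ by replacing, for each $s\in\{1,\ldots,n_i\}$, row $s$ of block $B_i$ by $\overline{r}^{(n_{\max}-s+k_{i,s})}(x_i)$. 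At $x_i=x_j$, the determinant $\widetilde{\Delta}(\cdots)(x_j)$ vanishes unless the derivative orders $d_s:=n_{\max}-s+k_{i,s}$ (for $s=1,\ldots,n_i$) are pairwise distinct and disjoint from the set $\{n_{\max}-n_j,\ldots,n_{\max}-1\}$ of derivative orders actually present in block $B_j$; otherwise two identical rows appear in the resulting $n\times n$ determinant.

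The main obstacle — and the genuinely new combinatorial ingredient compared to Case A — is to show that any such admissible configuration satisfies $\sum_s k_{i,s}\ge\min\{n_i^2,n_j^2\}$. Here the forbidden orders form a gap at the \emph{top} rather than at the \emph{bottom}, which complicates the bookkeeping. Rewriting $\sum_s k_{i,s}=\sum_s d_s+\binom{n_i+1}{2}-n_i n_{\max}$ subject to $d_s\ge n_{\max}-s$, the $d_s$ pairwise distinct, and $d_s\notin\{n_{\max}-n_j,\ldots,n_{\max}-1\}$, I would minimise $\sum_s d_s$ greedily by matching the largest available index $s$ with the smallest admissible $d$-value. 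If $n_i\le n_j$, then every $d_s$ is forced to be $\ge n_{\max}$ (the a priori lower bound $n_{\max}-s$ already lies inside the forbidden range), so $\sum_s d_s\ge n_i n_{\max}+\binom{n_i}{2}$ and hence $\sum_s k_{i,s}\ge\binom{n_i}{2}+\binom{n_i+1}{2}=n_i^2$. If $n_i>n_j$, the $n_i-n_j$ indices $s\in\{n_j+1,\ldots,n_i\}$ can be placed in the free interval below the forbidden set (taking $d_s=n_{\max}-s$), while the remaining indices $s\le n_j$ still need $d_s\ge n_{\max}$; a short computation based on the identity $\binom{n_i-n_j}{2}+\binom{n_j}{2}+(n_i-n_j)n_j=\binom{n_i}{2}$ then gives $\sum_s k_{i,s}\ge n_j^2$. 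Combining the two cases delivers the common bound $\min\{n_i^2,n_j^2\}$ and completes the proof.
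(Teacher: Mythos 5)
Your argument is correct and follows the paper's proof essentially verbatim: the Leibniz expansion, the identification of the forbidden interval $\{n_{\max}-n_j,\ldots,n_{\max}-1\}$, and the lower-bound count on $\sum_s k_{i,s}$ are all exactly as in the paper. The only difference is that the paper invokes ``without loss of generality $n_i\le n_j$'' (valid because for each unordered pair one is free to choose which of the two variables plays the role of the polynomial variable $x_i$), and thereby skips the second, somewhat fussier case $n_i>n_j$ that you work out explicitly; your arithmetic there is correct, but the WLOG reduction is the cleaner route.
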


\begin{proof}
With the notation in \eqref{rivi}, block $B_j$ becomes
$$
B_j^T =
\begin{bmatrix}
\overline{r}_j^{(n_{\max}-1)} & \overline{r}_j^{(n_{\max}-2)} & \cdots & \overline{r}_j^{(n_{\max}-n_j)}
\end{bmatrix}.
$$
Let $i \in \{ 1, \ldots, m \}$ and $j \in \{ 1, \ldots, m \} \setminus \{i\}$. Without loss of generality we may assume that $n_i \le n_j$. Consider the determinant $\det \mathcal{B}$ as a polynomial $P$ in $x_i$: $\det \mathcal{B}  =: P(x_i)$.

Again we use the general Leibniz rule for derivatives.
Differentiating $P$ means we are differentiating block $B_i$, since all the other blocks are constants with respect to 
$x_i$ (compare to \eqref{Leibnizderiv}):
\begin{multline*}\label{Leibnizderiv2}
\mathrm{D}_i^k \det \mathcal{B} = 
\sum_{k_{i,1}+k_{i,2}+\ldots+k_{i,n_i}=k} \binom{k}{0,\ldots,0,k_{i,1}, \ldots, k_{i,n_i}, 0, \ldots, 0} \cdot \\
\det
\begin{bmatrix}
* & \mathrm{D}_i^{k_{i,1}} \overline{r}_i^{(n_{\max}-1)} & \mathrm{D}_i^{k_{i,2}} \overline{r}_i^{(n_{\max}-2)} & \cdots 
& \mathrm{D}_i^{k_{i,n_i}} \overline{r}_i^{(n_{\max} - n_i)} & *
\end{bmatrix}\\
= \sum_{k_{i,1}+k_{i,2}+\ldots+k_{i,n_i}=k} \binom{k}{0,\ldots,0,k_{i,1}, \ldots, k_{i,n_i}, 0, \ldots, 0} 
\cdot \Delta(k_{i,1},k_{i,2},\ldots,k_{i,n_i})(x_i),
\end{multline*}
where
$$
\Delta(k_{i,1},k_{i,2},\ldots,k_{i,n_i})(x_i):= \det
\begin{bmatrix}
* & \overline{r}^{(k_{i,1}+n_{\max}-1)}(x_i) & \ldots &  \overline{r}^{(k_{i,n_i}+n_{\max} - n_i)}(x_i) & * 
\end{bmatrix}, 
$$
and the stars are again used to denote the other, unchanged blocks. 

We evaluate the derivative $P^{(k)}$ at $x_j$.
First we see that
$$
\Delta(k_{i,1},k_{i,2},\ldots,k_{i,n_i})(x_j)=0
$$
if
$$
\begin{cases}
k_{i,s}+n_{\max}-s = k_{i,t}+n_{\max}-t,\quad s\ne t,\quad s,t\in\{1,\ldots,n_i\};\\
\text{or}\\
\text{there exists an}\ s\in\{1,\ldots,n_i\}\ \text{such that}\  k_{i,s}+n_{\max}-s \in[n_{\max}-n_j,n_{\max}-1].
\end{cases}
$$
Therefore, supposing 
\begin{equation*}
\Delta(k_{i,1},k_{i,2},\ldots,k_{i,n_i})(x_j)\ne 0
\end{equation*}
implies that the numbers $k_{i,1}+n_{\max}-1,\ldots,k_{i,n_i}+n_{\max}-n_i$ are distinct and do not belong into the interval $[n_{\max}-n_j,n_{\max}-1]$.
However, the case $k_{i,s}+n_{\max}-s<n_{\max}-n_j$ never happens because $n_i\le n_j$ by our assumption.

Hence
\begin{equation*}
\sum_{s=1}^{n_i} (k_{i,s}+n_{\max}-s) \ge \sum_{t=0}^{n_i-1} (n_{\max}+t),
\end{equation*}
which gives
\begin{equation*}
k=\sum_{s=1}^{n_i} k_{i,s} \ge n_i^2 = \min \left\{n_i^2, n_j^2 \right\}.
\end{equation*}
Hence
\begin{equation*}\label{derivaattaehto3}
P^{(k)} (x_j) =0, \quad k = 0, 1, \ldots, \min \left\{ n_i^2, n_j^2 \right\}-1.
\end{equation*}
Applying Lemma \ref{derivaattalemma} with $\mathbf{I} = \mathbf{D} [x_1, \ldots, \hat{x}_i, \ldots, x_m]$, we get
\begin{equation*}
(x_i - x_j)^{\min \left\{ n_i^2, n_j^2 \right\}} \underset{\mathbf{D} [x_1, \ldots, \hat{x}_i,\ldots, x_m][x_i]}{\bigg|} P (x_i),
\end{equation*}
which proves the claim since the elements $x_g-x_h$ and $x_i-x_j$ are relatively prime when $(g,h)\ne (i,j)$.
\end{proof}

\section{Hermite-Pad\'e approximations to the exponential function: tame case}\label{sec:tame}

The problem of finding explicit Hermite-Pad\'e approximations in the case where the degrees of the polynomials are 
free parameters was resolved already by Hermite \cite{Hermite}.

Let $\alpha_1, \ldots, \alpha_m$ be distinct variables and denote $\overline{\alpha} = (\alpha_1, \ldots, \alpha_m)^T$. 
For given $l_0,l_1, \ldots,  l_m\in \mathbb{Z}_{\ge 1}$ we define 
$\sigma_{i}=\sigma_{i}\! \left(\overline l,\overline{\alpha}\right)$ by the polynomial identity
\begin{equation*}\label{}
w^{l_0}\prod_{j=1}^{m}(\alpha_j-w)^{l_j}=
\sum_{i=l_0}^{L_0}\sigma_{i}\! \left(\overline l,\overline{\alpha}\right) w^i,\quad L_0:=l_0+l_1+\ldots+l_m.
\end{equation*}
Note also the representation 
\begin{equation*}\label{}
\sigma_{i}\! \left(\overline l,\overline{\alpha}\right)=(-1)^{i-l_0}\sum_{l_0+i_1+\ldots+i_m=i}
\binom{l_1}{i_1} \cdots \binom{l_m}{i_m}\cdot \alpha_1^{l_1-i_1}\cdots\alpha_m^{l_m-i_m}.
\end{equation*}

\begin{theorem}\label{Known}
Let $l_0,l_1, \ldots,  l_m$ be positive integers and let $\alpha_1, \ldots, \alpha_m$ be distinct variables. 
Denote $\overline{\alpha} = (\alpha_1, \ldots, \alpha_m)^T$, $\overline{l} = (l_0, l_1, \ldots,  l_m)^T$, 
$L_0:=l_0+l_1+\ldots+l_m$, $L:=l_1+\ldots+l_m$. 
Then there exist polynomials $A_{\overline{l},j}(t, \overline{\alpha}) \in \Q[t, \overline{\alpha}]$ and remainders $R_{\overline{l},j}(t,\overline\alpha)$ such that
\begin{equation}\label{Padeapprox}
A_{\overline{l}, 0}(t,\overline\alpha) e^{\alpha_j t}  - A_{\overline{l}, j}(t,\overline\alpha) = R_{\overline{l},j}(t,\overline\alpha),
\quad j=1,\ldots,m,
\end{equation}
where
\begin{equation*}
\begin{cases}
\deg_t A_{\overline{l},j}(t,\overline\alpha) \le L_0-l_j,\quad j=0,1,\ldots,m;\\
L_0+1\le \underset{t=0}{\ord} \, R_{\overline{l},j}(t,\overline\alpha) < \infty,\quad j=1,\ldots,m.
\end{cases}
\end{equation*}
Moreover, the polynomial $A_{\overline{l},0}(t,\overline\alpha)$ has an explicit expression
\begin{equation}\label{Anollapolynomi}
A_{\overline{l},0}(t,\overline\alpha) = \sum_{h=0}^L (L_0-h)! \sigma_{L_0-h}\! \left(\overline l,\overline{\alpha}\right) t^h.
\end{equation}
\end{theorem}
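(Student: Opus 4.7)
The plan is to take \eqref{Anollapolynomi} as the \emph{definition} of $A_{\overline{l},0}(t,\overline{\alpha})$ and then read off $A_{\overline{l},j}$ and $R_{\overline{l},j}$ directly from the Taylor expansion of $A_{\overline{l},0}(t,\overline{\alpha})\, e^{\alpha_j t}$ at $t=0$. The engine of the argument is the auxiliary polynomial
\[
P(w) := w^{l_0}\prod_{k=1}^{m}(\alpha_k-w)^{l_k} = \sum_{i=l_0}^{L_0}\sigma_{i}\!\left(\overline{l},\overline{\alpha}\right) w^{i},
\]
whose crucial property is that $(w-\alpha_j)^{l_j}$ divides $P(w)$, so that $P^{(q)}(\alpha_j)=0$ for every $0\le q<l_j$.

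First I would rewrite \eqref{Anollapolynomi} in the equivalent form $A_{\overline{l},0}(t,\overline{\alpha}) = \sum_{i=l_0}^{L_0} i!\, \sigma_i\, t^{L_0-i}$ via the substitution $i=L_0-h$. This displays $A_{\overline{l},0}$ as a polynomial of degree exactly $L$, with leading coefficient $l_0!\,\sigma_{l_0}=l_0!\prod_{k}\alpha_{k}^{l_{k}}\ne 0$, so in particular $A_{\overline{l},0}\not\equiv 0$. Then I would multiply by $e^{\alpha_j t}=\sum_{k\ge 0}\alpha_j^k t^k/k!$ and extract the coefficient of $t^N$ for each $0\le N\le L_0$. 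After a short reindexing (set $s=k$ and $i=s+L_0-N$), matching summation bounds via $q:=L_0-N$, the result collapses to
\[
[t^N]\bigl(A_{\overline{l},0}(t,\overline{\alpha})\, e^{\alpha_j t}\bigr) = \sum_{s=\max(0,\,N-L)}^{N}\frac{(s+L_0-N)!}{s!}\,\sigma_{s+L_0-N}\,\alpha_j^{s} = P^{(L_0-N)}(\alpha_j).
\]
This identification of a Taylor coefficient of $A_{\overline{l},0}\, e^{\alpha_j t}$ with a derivative of $P$ is the one step requiring genuine, though purely combinatorial, calculation, and I expect it to be the main (essentially only) obstacle in the proof.

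Once this identity is in hand, the theorem writes itself. For $L_0-l_j<N\le L_0$ one has $0\le L_0-N<l_j$, hence the coefficient $P^{(L_0-N)}(\alpha_j)$ vanishes. Setting
\[
A_{\overline{l},j}(t,\overline{\alpha}) := \sum_{N=0}^{L_0-l_j} P^{(L_0-N)}(\alpha_j)\, t^N, \qquad R_{\overline{l},j}(t,\overline{\alpha}) := A_{\overline{l},0}(t,\overline{\alpha})\, e^{\alpha_j t} - A_{\overline{l},j}(t,\overline{\alpha}),
\]
gives a polynomial $A_{\overline{l},j}\in\Z[t,\overline{\alpha}]\subset\Q[t,\overline{\alpha}]$ with $\deg_t A_{\overline{l},j}\le L_0-l_j$, while the above vanishing forces the coefficients of $t^N$ in $R_{\overline{l},j}$ to be zero for all $0\le N\le L_0$, so $\ord_{t=0}R_{\overline{l},j}\ge L_0+1$. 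Finally, $\ord_{t=0}R_{\overline{l},j}<\infty$ because $e^{\alpha_j t}$ is not a polynomial in $t$ and $A_{\overline{l},0}\not\equiv 0$, ruling out $R_{\overline{l},j}\equiv 0$. All the stated degree and order conditions follow, and the explicit expression \eqref{Anollapolynomi} is built into the construction from the outset.
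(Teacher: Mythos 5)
Your proof is correct, but it follows a genuinely different route from the paper's. The paper proves this theorem as a ``warm-up'' for the wild case: it encodes the degree/order conditions as a homogeneous linear system $\mathcal{U}\overline{b}=\overline{0}$ with $L$ equations in $L+1$ unknowns, applies Cramer's rule to express the solution as the homogeneous vector of $L\times L$ minors, factors each minor explicitly (via the generalised Vandermonde lemmas and a Laplace expansion of an augmented $(L+1)\times(L+1)$ determinant), and then reads off the formula \eqref{Anollapolynomi} by comparing two expressions for that augmented determinant. You instead take \eqref{Anollapolynomi} as the definition and verify directly that it works, via the clean identity
\[
[t^N]\bigl(A_{\overline{l},0}(t,\overline{\alpha})\,e^{\alpha_j t}\bigr)=P^{(L_0-N)}(\alpha_j),\qquad 0\le N\le L_0,
\]
where $P(w)=w^{l_0}\prod_k(\alpha_k-w)^{l_k}$; the order condition then follows from $(\alpha_j-w)^{l_j}\mid P(w)$. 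Both arguments are sound. Yours is essentially Hermite's classical verification: it is shorter, entirely elementary, and requires none of the exterior-algebra or block-determinant machinery. The paper's longer route, however, is not redundant in context: the whole point of their proof is to exhibit the Pad\'e polynomial as the vector of $L\times L$ minors of $\mathcal{U}$, since this is precisely the structure they then try to replicate (unsuccessfully, in explicit form) for the wild case $\mathcal{V}(\overline{\alpha})$, and it is also what ties the Pad\'e problem to the gcd of minors appearing in the Bombieri--Vaaler version of Siegel's lemma. So your proof is the efficient one if the goal is just Theorem~\ref{Known}; the paper's is the right one if the goal is to set up the Hermite--Thue correspondence.

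One small presentational point: your final step asserting $\ord_{t=0}R_{\overline{l},j}<\infty$ should note explicitly that you are working with formal power series in $t$ over $\Q[\overline{\alpha}]$, where $e^{\alpha_j t}$ has infinitely many non-zero coefficients because $\alpha_j$ is an indeterminate; then $A_{\overline{l},0}\not\equiv 0$ (leading coefficient $l_0!\,\sigma_{l_0}=l_0!\prod_k\alpha_k^{l_k}\ne 0$) forces $A_{\overline{l},0}\,e^{\alpha_j t}$ to be a non-polynomial series, so $R_{\overline{l},j}\ne 0$. You gesture at this, but making the formal-series setting explicit closes the argument cleanly.
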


\subsection{A new proof of Theorem \ref{Known}}

Write $A_{\overline{l}, 0}(t,\overline\alpha) =: \sum_{h=0}^{L} b_ht^h$ and
\begin{equation*}\label{sarjojenkerto}
e^{\alpha_j t} A_{\overline{l}, 0}(t,\overline\alpha) =: \sum_{N=0}^\infty r_{N,j}t^N.
\end{equation*}
Set $r_{L_0-k_j, j} = 0$ for $k_j =0, \ldots, l_j-1$, $j = 1, \ldots, m$. Then also
\begin{equation}\label{kerroinyhtalot}
\sum_{h+n=L_0-k_j} \frac{\alpha_j^n}{n!} b_h
= \sum_{h=0}^{\min\{L_0-k_j,L\}} \frac{\alpha_j^{L_0-k_j-h}}{(L_0-k_j-h)!} b_h =0
\end{equation}
for all $k_j =0, \ldots, l_j-1$, $j = 1, \ldots, m$, meaning that we have $L$ equations in $L+1$ unknowns $b_h$, $h=0,1,\dots,L$.
The equations in \eqref{kerroinyhtalot} can be written in matrix form:
$$
\mathcal{U} \overline{b} = \overline{0}, \quad \overline{b} := (b_0, b_1, \ldots, b_L)^T,
$$
with
\begin{equation*}\label{}
\mathcal{U} :=
\begin{pmatrix}
U_1 \\
U_2 \\
\vdots \\
U_m \\
\end{pmatrix}_{\!\!L \times (L+1)}, \quad
U_j :=
\begin{pmatrix}
\frac{\alpha_j^{L_0}}{L_0!} & \frac{\alpha_j^{L_0-1}}{(L_0-1)!} & \cdots & \frac{\alpha_j^{l_0}}{l_0!} \\
\frac{\alpha_j^{L_0-1}}{(L_0-1)!} & \frac{\alpha_j^{L_0-2}}{(L_0-2)!} & \cdots & \frac{\alpha_j^{l_0-1}}{(l_0-1)!} \\
\vdots & \vdots & \ddots & \vdots  \\
\frac{\alpha_j^{L_0-l_j+1}}{(L_0-l_j+1)!} & \frac{\alpha_j^{L_0-l_j}}{(L_0-l_j)!} & \cdots & \cdot \\
\end{pmatrix}_{\!\! l_j \times (L+1)}, \quad
j=1, \ldots, m,
\end{equation*}
where the typical element
\begin{equation}\label{typicalelement1}
\frac{\alpha_j^{L_0-k_j-h}}{(L_0-k_j-h)!}, \quad k_j =0, \ldots, l_j-1, \quad j = 1, \ldots, m, \quad h=0,1,\dots,L
\end{equation}
is zero whenever $L_0-k_j-h <0$.

In the following the notation $\mathcal{U}[j]$ is used for the minor obtained by removing the $j$th column of $\mathcal{U}$, $j=0,1, \ldots L$. 
The rightmost $L \times L$ minor of 
$\mathcal{U}\in \mathcal{M}_{L \times (L+1)} (\mathbb{Q}[\alpha_1, \ldots, \alpha_m] )$ 
is therefore denoted by $\mathcal{U}[0]$, and it will be considered as a polynomial in 
$\mathbb{Q}[\alpha_1, \ldots, \alpha_m]$. 
(Notice that, now and later, for technical simplicity the numbering of columns starts from $0$.)

\subsubsection{Factors of $\mathcal{U}[0]$}

\begin{lemma}\label{polyvande2}
With the above notation, we have
\begin{equation}\label{polfactorsU0}
\left( \prod_{i=1}^{m} \alpha_i^{l_0l_i} \right) \prod_{1 \le i < j \le m} (\alpha_i - \alpha_j)^{l_i l_j} 
\underset{\mathbb{Q}[\alpha_1, \ldots, \alpha_m]}{\bigg|} \mathcal{U}[0].
\end{equation}
\end{lemma}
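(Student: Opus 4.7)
My plan is to exhibit two independent sources of divisibility of $\mathcal{U}[0]$ in the UFD $\mathbb{Q}[\alpha_1,\ldots,\alpha_m]$ and then combine them, using that $\alpha_1,\ldots,\alpha_m$ together with the differences $\alpha_i-\alpha_j$ ($i<j$) are pairwise non-associated primes.

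For the Vandermonde-type factor, I would observe that, after deleting column $0$, the entry of $\mathcal{U}[0]$ in row $(j,k_j)$ and column $h\in\{1,\ldots,L\}$ equals $p_{h-1}^{(k_j)}(\alpha_j)$ with $p_{h-1}(x):=\frac{x^{L_0-h}}{(L_0-h)!}$. This places $\mathcal{U}[0]$ directly in the setting of Lemma \ref{blockpolyvande} (Case A), with $n=L$, $n_j=l_j$, $x_j=\alpha_j$, so that
$$\prod_{1\le i<j\le m}(\alpha_i-\alpha_j)^{l_i l_j}\,\underset{\mathbb{Q}[\alpha_1,\ldots,\alpha_m]}{\bigg|}\,\mathcal{U}[0].$$

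For the factor $\alpha_i^{l_0 l_i}$, I would fix $i\in\{1,\ldots,m\}$ and view $\mathcal{U}[0]$ as a polynomial $P(\alpha_i)\in\mathbb{Q}[\alpha_1,\ldots,\hat{\alpha}_i,\ldots,\alpha_m][\alpha_i]$. Since only block $U_i$ depends on $\alpha_i$, the general Leibniz rule (as in \eqref{Leibnizderiv}) yields
$$P^{(k)}(\alpha_i)=\sum_{k_{i,1}+\ldots+k_{i,l_i}=k}\binom{k}{k_{i,1},\ldots,k_{i,l_i}}\Delta(k_{i,1},\ldots,k_{i,l_i})(\alpha_i),$$
where $\Delta$ is the determinant obtained from $\mathcal{U}[0]$ by applying $\mathrm{D}_i^{k_{i,s}}$ to the $s$-th row of $U_i$ (indexing $s=1,\ldots,l_i$ so that $s$ corresponds to $k_i=s-1$). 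Evaluating at $\alpha_i=0$, the entry in row $s$, column $h$ of the differentiated block $U_i$, namely $\frac{\alpha_i^{L_0-(s-1)-h-k_{i,s}}}{(L_0-(s-1)-h-k_{i,s})!}$, vanishes unless $h=L_0-(s-1)-k_{i,s}$, in which case it equals $1$. Hence each such row becomes either the standard basis vector $e_{h_s}$ with $h_s:=L_0-(s-1)-k_{i,s}$, or (if $h_s\notin\{1,\ldots,L\}$) the zero row, forcing $\Delta=0$. A necessary condition for $\Delta(k_{i,1},\ldots,k_{i,l_i})(0)\neq 0$ is therefore that the $h_s$ are pairwise distinct elements of $\{1,\ldots,L\}$; equivalently, the numbers $m_s:=(s-1)+k_{i,s}=L_0-h_s$ are distinct elements of $\{l_0,l_0+1,\ldots,L_0-1\}$. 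This forces
$$k=\sum_{s=1}^{l_i}k_{i,s}=\sum_{s=1}^{l_i}m_s-\binom{l_i}{2}\ge\Bigl(l_0 l_i+\binom{l_i}{2}\Bigr)-\binom{l_i}{2}=l_0 l_i,$$
so $P^{(k)}(0)=0$ for every $k<l_0 l_i$, and Lemma \ref{derivaattalemma} gives $\alpha_i^{l_0 l_i}\mid P(\alpha_i)$ in $\mathbb{Q}[\alpha_1,\ldots,\alpha_m]$.

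Combining the two divisibilities via the opening coprimality remark yields \eqref{polfactorsU0}. The main obstacle, as in the proofs of Lemmas \ref{blockpolyvande} and \ref{blockpolyvande2}, is the combinatorial counting that identifies the minimal $k$ compatible with the distinctness of the $h_s$; the remaining steps are routine bookkeeping with the Leibniz expansion.
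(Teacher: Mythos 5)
Your proof is correct. For the Vandermonde-type factor $\prod_{1\le i<j\le m}(\alpha_i-\alpha_j)^{l_il_j}$ you invoke Lemma \ref{blockpolyvande} directly by recognising $\mathcal{U}[0]$ as a Case~A block matrix with $p_{h-1}(x)=x^{L_0-h}/(L_0-h)!$, exactly as the paper does. Where you genuinely diverge is in proving $\alpha_i^{l_0 l_i}\mid\mathcal{U}[0]$: the paper uses the generalised minor expansion (Lemma \ref{manyblocksexpansion}) to write $\mathcal{U}[0]$ as a sum of products of $l_j\times l_j$ minors taken one from each block, and then bounds from below the $\alpha_j$-degree of any $l_j\times l_j$ minor of block $j$ by $l_0 l_j$; you instead reuse the Leibniz-plus-Lemma~\ref{derivaattalemma} mechanism from the proof of Lemma \ref{blockpolyvande}, specialising at $\alpha_i=0$ rather than at $\alpha_i=\alpha_j$, and extracting the bound $k\ge l_0 l_i$ from the requirement that the indices $h_s=L_0-(s-1)-k_{i,s}$ be distinct and lie in $\{1,\ldots,L\}$. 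Your counting $\sum m_s\ge l_0 l_i+\binom{l_i}{2}$ is correct and the conclusion follows. Your version has the aesthetic advantage of handling both the Vandermonde and power factors with the same derivative-vanishing tool; the paper's minor-expansion route is slightly shorter here because the nonzero entries of $\mathcal{U}[0]$ are monomials in $\alpha_j$, so the $\alpha_j$-power in any term of the expansion is immediate to read off. Either way, the final gluing step via pairwise coprimality of $\alpha_1,\ldots,\alpha_m$ and $\alpha_i-\alpha_j$ in the UFD $\mathbb{Q}[\alpha_1,\ldots,\alpha_m]$ is the same.
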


\begin{proof}
We have
\begin{equation}\label{A0-minori}
\mathcal{U}[0]=
\begin{vmatrix}
\frac{\alpha_1^{L_0-1}}{(L_0-1)!} & \frac{\alpha_1^{L_0-2}}{(L_0-2)!} & \cdots & \frac{\alpha_1^{l_0}}{l_0!} \\
\frac{\alpha_1^{L_0-2}}{(L_0-2)!} & \frac{\alpha_1^{L_0-3}}{(L_0-3)!} & \cdots & \frac{\alpha_1^{l_0-1}}{(l_0-1)!} \\
\vdots & \vdots & \ddots & \vdots  \\
\frac{\alpha_1^{L_0-l_1}}{(L_0-l_1)!} & \frac{\alpha_1^{L_0-l_1-1}}{(L_0-l_1-1)!} & \cdots & \cdot \\
\vdots & \vdots & \ddots & \vdots \\
\vdots & \vdots & \ddots & \vdots \\
\frac{\alpha_m^{L_0-1}}{(L_0-1)!} & \frac{\alpha_m^{L_0-2}}{(L_0-2)!} & \cdots & \frac{\alpha_m^{l_0}}{l_0!} \\
\frac{\alpha_m^{L_0-2}}{(L_0-2)!} & \frac{\alpha_m^{L_0-3}}{(L_0-3)!} & \cdots & \frac{\alpha_m^{l_0-1}}{(l_0-1)!} \\
\vdots & \vdots & \ddots & \vdots  \\
\frac{\alpha_m^{L_0-l_m}}{(L_0-l_m)!} & \frac{\alpha_m^{L_0-l_m-1}}{(L_0-l_m-1)!} & \cdots & \cdot \\
\end{vmatrix}_{L \times L},
\end{equation}
where the $j$th block of $\mathcal{U}[0]$ looks like
\begin{equation*}\label{jthblockM0}
\begin{pmatrix}
\mathrm{D}_j^{0}\frac{\alpha_j^{L_0-1}}{(L_0-1)!} & \cdots & \mathrm{D}_j^{0}\frac{\alpha_j^{l_0}}{l_0!} \\
\mathrm{D}_j^{1}\frac{\alpha_j^{L_0-1}}{(L_0-1)!} & \cdots & \mathrm{D}_j^{1}\frac{\alpha_j^{l_0}}{l_0!} \\
\vdots & \ddots & \vdots  \\
\mathrm{D}_j^{l_j-1}\frac{\alpha_j^{L_0-1}}{(L_0-1)!} & \cdots & \mathrm{D}_j^{l_j-1}\frac{\alpha_j^{l_0}}{l_0!} \\
\end{pmatrix}_{l_j \times L},
\end{equation*}
denoting $\mathrm{D}_j=\frac{\mathrm{d}}{\mathrm{d}\alpha_j}$.
The factor
$$
\prod_{1 \le i < j \le m} (\alpha_i - \alpha_j)^{l_i l_j} \underset{\mathbb{Q}[\alpha_1, \ldots, \alpha_m]}{\bigg|} \mathcal{U}[0]
$$
follows thus directly from Lemma \ref{blockpolyvande}.

It remains to show the powers of alphas.
We can now use the $(l_1 \times l_1, \ldots, l_m \times l_m)$ minor expansion (Lemma \ref{manyblocksexpansion}). 
Any $l_j \times l_j$ minor of the $j$th block
\begin{equation*}\label{}
\begin{pmatrix}
\frac{\alpha_j^{L_0-1}}{(L_0-1)!} & \frac{\alpha_j^{L_0-2}}{(L_0-2)!} & \cdots & \frac{\alpha_j^{l_0}}{l_0!} \\
\frac{\alpha_j^{L_0-2}}{(L_0-2)!} & \frac{\alpha_j^{L_0-3}}{(L_0-3)!} & \cdots & \frac{\alpha_j^{l_0-1}}{(l_0-1)!} \\
\vdots & \vdots & \ddots & \vdots  \\
\frac{\alpha_j^{L_0-l_j}}{(L_0-l_j)!} & \frac{\alpha_j^{L_0-l_j-1}}{(L_0-l_j-1)!} & \cdots & \cdot \\
\end{pmatrix}_{l_j \times L}
\end{equation*}
contains the factor 
$$
\prod_{\substack{i=0 \\ L_0-v_i-w_{i}\ge 0}}^{l_j-1} \alpha_j^{L_0-i-w_{i}}, \quad w_{i}\in\{1,\ldots,L\},
$$
where the numbers $w_i$ are pairwise distinct, so that $\sum_{i=0}^{l_j-1} w_i \le \sum_{i=0}^{l_j-1} (L-i)$. 
Now
\begin{equation*}\label{}
\begin{split}
\sum_{\substack{i=0 \\ L_0-v_i-w_{i}\ge 0}}^{l_j-1} (L_0-i-w_{i}) &\ge l_j L_0-\sum_{i=0}^{l_j-1} i -\sum_{i=0}^{l_j-1} (L-i) \\
&\ge l_j(L_0-L) -\frac{(l_j-1)l_j}{2}+\frac{(l_j-1)l_j}{2}=l_0l_j.\\
\end{split}
\end{equation*}
Therefore we get
$$
\alpha_1^{l_0l_1} \cdots \alpha_m^{l_0l_m} \underset{\mathbb{Q}[\alpha_1, \ldots, \alpha_m]}{\bigg|}  \mathcal{U}[0].
$$
Since the polynomials $\alpha_1^{l_0l_1} \cdots \alpha_m^{l_0l_m}$ and $\prod_{1 \le i < j \le m} (\alpha_i - \alpha_j)^{l_i l_j}$ 
are relatively prime, we have proved the claim \eqref{polfactorsU0}.
\end{proof}

Next we show that the remaining factor is a constant, a rational number.

\begin{lemma}
There exists a rational number $F_m=F_m(l_0,\ldots,l_m)$ such that
\begin{equation}\label{M0-minori}
\mathcal{U}[0]= F_m\cdot \left( \prod_{i=1}^{m} \alpha_i^{l_0l_i} \right) \prod_{1 \le i < j \le m} (\alpha_i - \alpha_j)^{l_i l_j}. 
\end{equation}
\end{lemma}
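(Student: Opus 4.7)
By Lemma \ref{polyvande2} we may write
\[
\mathcal{U}[0] = F\!\left(\alpha_1,\ldots,\alpha_m\right) \cdot \left(\prod_{i=1}^m \alpha_i^{l_0 l_i}\right) \prod_{1\le i<j\le m}(\alpha_i-\alpha_j)^{l_il_j}
\]
for some $F \in \mathbb{Q}[\alpha_1,\ldots,\alpha_m]$, since $\mathbb{Q}[\alpha_1,\ldots,\alpha_m]$ is a UFD in which the displayed product is an actual (polynomial) divisor of $\mathcal{U}[0]$. So the task reduces to showing $F\in\mathbb{Q}$, which I plan to accomplish by a degree comparison in each variable $\alpha_k$ separately.

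The key step is to obtain an upper bound for $\deg_{\alpha_k}\mathcal{U}[0]$. I would fix $k\in\{1,\ldots,m\}$ and apply the generalised minor expansion (Lemma \ref{manyblocksexpansion}) along the block partition $(l_1\times l_1,\ldots, l_m\times l_m)$. Each resulting term is a product of one $l_j\times l_j$ minor taken from the $j$th block, and only the minor coming from block $k$ contributes $\alpha_k$ (since block $j$ depends solely on $\alpha_j$). For any choice of columns $w_0<w_1<\cdots<w_{l_k-1}$ in $\{0,1,\ldots,L-1\}$, Leibniz expansion of that $l_k\times l_k$ minor shows each nonzero monomial has $\alpha_k$-degree exactly $l_k(L_0-1)-\binom{l_k}{2}-\sum_{i=0}^{l_k-1}w_i$, so minimising $\sum w_i \ge \binom{l_k}{2}$ yields the bound
\[
\deg_{\alpha_k}\mathcal{U}[0]\le l_k(L_0-1)-2\binom{l_k}{2}=l_k(L_0-l_k).
\]

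On the other hand, a direct count in the explicit factor gives
\[
\deg_{\alpha_k}\!\left[\left(\prod_{i=1}^m \alpha_i^{l_0 l_i}\right)\prod_{1\le i<j\le m}(\alpha_i-\alpha_j)^{l_il_j}\right]=l_0 l_k+\sum_{j\ne k}l_kl_j=l_k(L_0-l_k).
\]
Combining these with the degree formula $\deg_{\alpha_k}\mathcal{U}[0]=\deg_{\alpha_k}F+l_k(L_0-l_k)$ (valid in the integral domain $\mathbb{Q}[\alpha_1,\ldots,\hat{\alpha}_k,\ldots,\alpha_m][\alpha_k]$) forces $\deg_{\alpha_k}F\le 0$. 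As $k$ was arbitrary, $F$ is independent of every variable and thus $F=F_m(l_0,\ldots,l_m)\in\mathbb{Q}$.

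The main obstacle I anticipate is the bookkeeping in the degree bound for the $l_k\times l_k$ minor: one has to observe that every summand in the Leibniz expansion has the same $\alpha_k$-degree whenever it is nonzero, and that the minimum over admissible column choices $\sum w_i\ge\binom{l_k}{2}$ is actually attainable. Everything else — the factorisation out of $\mathcal{U}[0]$ and the degree of the explicit product — is either inherited from Lemma \ref{polyvande2} or a straightforward count, so the proof is essentially a careful degree comparison carried out one variable at a time.
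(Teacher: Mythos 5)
Your proof is correct and follows essentially the same route as the paper: factor out the explicit product via Lemma \ref{polyvande2}, then bound $\deg_{\alpha_k}\mathcal{U}[0]$ from above (the paper does this by direct Leibniz expansion, you first isolate the block-$k$ minor via Lemma \ref{manyblocksexpansion} and then Leibniz-expand it, but the resulting column-sum bound $\sum w_i\ge\binom{l_k}{2}$ and the value $l_k(L_0-l_k)$ are identical), compare with the degree of the explicit factor, and conclude $\deg_{\alpha_k}F\le 0$ for every $k$. The only cosmetic difference is the intermediate use of the generalised minor expansion; the degree arithmetic and the conclusion are the same as in the paper.
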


\begin{proof}
By Lemma \ref{polyvande2}, we know that
\begin{equation}\label{U0factorisation}
\mathcal{U}[0] = F_m \cdot \alpha_1^{l_0l_1} \cdots \alpha_m^{l_0l_m} \prod_{1 \le i < j \le m} (\alpha_i - \alpha_j)^{l_i l_j}
\end{equation}
for some $F_m \in \mathbb{Q}[\alpha_1, \ldots, \alpha_m]$.
Let $k \in \{1,\ldots,m\}$. Just as in the proof of Corollary \ref{polyvande}, we see that when \eqref{A0-minori} is treated as a polynomial in $\alpha_k$ and expanded using the Leibniz formula, the degree of a non-zero term is a sum
$$
\sum_{\substack{i=0 \\ L_0-i-w_i \ge 0}}^{l_k-1} (L_0-i-w_i), \quad w_i \in \{1, \ldots, L\},
$$
where the numbers $w_i$ are pairwise disjoint, so that $\sum_{i=0}^{l_k-1} w_i \ge \sum_{i=1}^{l_k} i$. Hence we get the upper bound
\begin{equation*}
\deg_{\alpha_k} \mathcal{U}[0] \le l_k L_0 - \frac{(l_k-1)l_k}{2} - \frac{l_k(l_k+1)}{2} = l_k(L_0-l_k).
\end{equation*}
On the other hand,
\begin{equation*}
\deg_{\alpha_k} \mathcal{U}[0] = \deg_{\alpha_k} F_m + l_0l_k + \sum_{\substack{j=1\\j \neq k}}^m l_k l_j  = \deg_{\alpha_k} F_m + l_k(L_0-l_k)
\end{equation*}
by \eqref{U0factorisation}. Thus $\deg_{\alpha_k} F_m \le 0$, and since $k$ was arbitrary, we must have $F_m=F_m(l_0,\ldots,l_m) \in \Q$.
\end{proof}

\subsubsection{Rank}

\begin{lemma}\label{ranklemma}
We have $\rank_{\mathbb{Q}[\alpha_1, \ldots, \alpha_m]} \mathcal{U} = L$.
\end{lemma}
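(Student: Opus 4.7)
The strategy is to exhibit a single nonvanishing $L\times L$ minor, for which the natural candidate is $\mathcal{U}[0]$ already studied. The preceding lemma factored
$$
\mathcal{U}[0]\;=\;F_m\cdot\prod_{i=1}^{m}\alpha_i^{l_0l_i}\prod_{1\le i<j\le m}(\alpha_i-\alpha_j)^{l_il_j}
$$
with $F_m\in\mathbb{Q}$, so the lemma reduces to verifying $F_m\ne 0$. I would do this by isolating the leading coefficient of $\mathcal{U}[0]$, regarded as a polynomial in the single variable $\alpha_1$, and then arguing by induction on $m$.

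By Lemma~\ref{manyblocksexpansion}, $\mathcal{U}[0]$ expands as a signed sum, over ordered partitions $(H_1,\ldots,H_m)$ of $\{1,\ldots,L\}$ with $|H_j|=l_j$, of the products $\prod_{j=1}^{m}\det(U_j)_{H_j}$, where $(U_j)_{H_j}$ denotes the $l_j\times l_j$ submatrix of the block $U_j$ on the column set $H_j$. Only the $j=1$ factor carries $\alpha_1$, and since entry $(k_1,h)$ of $U_1$ has $\alpha_1$-degree $L_0-k_1-h$, the total $\alpha_1$-degree of $\det(U_1)_{H_1}$ equals $l_1 L_0-\binom{l_1}{2}-\sum_{h\in H_1}h$, which is maximised uniquely by $H_1=\{1,2,\ldots,l_1\}$. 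This pinpoints the top $\alpha_1$-degree $l_1(L_0-l_1)$ of $\mathcal{U}[0]$. For this maximising choice, the Leibniz expansion of $\det(U_1)_{H_1}$ factors as
$$
\alpha_1^{l_1(L_0-l_1)}\cdot\det\!\left[\tfrac{1}{(L_0-k_1-h)!}\right]_{0\le k_1\le l_1-1,\ 1\le h\le l_1},
$$
and the binomial determinant here is a classical Vandermonde-type evaluation (via the identity $\det[1/(c_i-(j-1))!]=\prod_{i<j}(c_j-c_i)/\prod_i c_i!$ with $c_i=L_0-i$), yielding a nonzero rational.

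It remains to show that the complementary factor---the signed sum, over partitions $(H_2,\ldots,H_m)$ of $\{l_1+1,\ldots,L\}$, of $\prod_{j=2}^{m}\det(U_j)_{H_j}$---is a nonzero polynomial in $\alpha_2,\ldots,\alpha_m$. Substituting $h'=h-l_1$ converts the relevant entries into $\alpha_j^{(L_0-l_1)-k_j-h'}/((L_0-l_1)-k_j-h')!$ with $h'\in\{1,\ldots,L-l_1\}$, which is precisely an $\mathcal{U}[0]$-type determinant for the reduced data $(\alpha_2,\ldots,\alpha_m)$ with parameters $(l_0,l_2,\ldots,l_m)$ (the parameter $l_0$ is preserved under the shift). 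Induction on $m$, with the base case $m=1$ handled directly by the binomial determinant evaluation above, then closes the argument: the leading $\alpha_1$-coefficient of $\mathcal{U}[0]$ is a nonzero polynomial in $\alpha_2,\ldots,\alpha_m$, hence $\mathcal{U}[0]\ne 0$, hence $F_m\ne 0$, hence $\rank\mathcal{U}=L$. The main hazard is the combinatorial bookkeeping around the block-permutation signs and verifying uniqueness of the maximising partition; the Vandermonde-type binomial determinant itself is standard.
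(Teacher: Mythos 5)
Your proof is correct and takes essentially the same approach as the paper: both isolate the block-diagonal term in the generalised minor expansion of $\mathcal{U}[0]$ as the source of the extremal monomial, and evaluate the resulting reciprocal-factorial block determinants via the falling-factorial rewriting and the polynomial Vandermonde identity (Corollary \ref{Kratt}). The only difference is organisational: the paper multiplies rows by suitable powers of $\alpha_j$ and reads off $F_m=\prod_{j=1}^m f_j$ in one pass, whereas you peel off one block at a time by extracting the leading $\alpha_1$-coefficient and inducting on $m$.
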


\begin{proof}
It is enough to show that the matrix $\mathcal{U}$ has one non-zero $L \times L$ minor, in this case the rightmost one, $\mathcal{U}[0]$.
Therefore, it remains to show that the constant $F_m$ is non-zero.

First we multiply both sides of equation \eqref{M0-minori} with the term
$$
\alpha_1^{\frac{l_1 (l_1+1)}{2}} \alpha_2^{\frac{l_2 (l_2+1)}{2}} \cdots \alpha_m^{\frac{l_m (l_m+1)}{2}}.
$$
Then, on the right-hand side of \eqref{M0-minori} the coefficient of the monomial
$$
\alpha_1^{\frac{l_1 (l_1+1)}{2} + l_0l_1 + l_1l_2 + \ldots + l_1l_m} 
\alpha_2^{\frac{l_2 (l_2+1)}{2} + l_0l_2 + l_2l_3 + \ldots + l_2l_m} \cdots 
\alpha_{m-1}^{\frac{l_{m-1} (l_{m-1}+1)}{2} + l_0l_{m-1} + l_{m-1}l_m} 
\alpha_m^{\frac{l_m (l_m+1)}{2} + l_0l_m}
$$
is $F_m$. 
On the left-hand side of \eqref{M0-minori} this monomial arises from the block diagonal when using the generalised minor expansion 
(Lemma \ref{manyblocksexpansion})
with $l_j \times l_j$ minors, $j=1, \ldots, m$, to expand $\mathcal{U}[0]$, where the first row of block $j$, $j=1, \ldots, m$, 
has been multiplied by $\alpha_j$, the second by $\alpha_j^2$ and so on, until the $l_j$th row:

\begin{equation*}
\begin{split}
&F_m \cdot \prod_{j=1}^m \alpha_j^{\frac{l_j (l_j+1)}{2}+l_0 l_j + l_j (l_{j+1}+\ldots+l_m)} \\
= \;
&\prod_{j=1}^m
\begin{vmatrix}
\frac{\alpha_j^{L_0-l_1-\ldots-l_{j-1}}}{(L_0-l_1-\ldots-l_{j-1}-1)!} & \frac{\alpha_j^{L_0-l_1-\ldots-l_{j-1}-1}}{(L_0-l_1-\ldots-l_{j-1}-2)!} & \cdots & \frac{\alpha_j^{L_0-l_1-\ldots-l_{j-1}-l_j+1}}{(L_0-l_1-\ldots-l_{j-1}-l_j)!} \\
\frac{\alpha_j^{L_0-l_1-\ldots-l_{j-1}}}{(L_0-l_1-\ldots-l_{j-1}-2)!} & \frac{\alpha_j^{L_0-l_1-\ldots-l_{j-1}-1}}{(L_0-l_1-\ldots-l_{j-1}-3)!} & \cdots & \frac{\alpha_j^{L_0-l_1-\ldots-l_{j-1}-l_j+1}}{(L_0-l_1-\ldots-l_{j-1}-l_j-1)!} \\
\vdots & \vdots & \ddots & \vdots  \\
\frac{\alpha_j^{L_0-l_1-\ldots-l_{j-1}}}{(L_0-l_1-\ldots-l_{j-1}-l_j)!} & \frac{\alpha_j^{L_0-l_1-\ldots-l_{j-1}-1}}{(L_0-l_1-\ldots-l_{j-1}-l_j-1)!} & \cdots & \cdot \\
\end{vmatrix}_{l_j \times l_j}\\
= \; &\left( \prod_{j=1}^m \alpha_j^{(L_0-l_1-\ldots-l_{j-1}) + (L_0-l_1-\ldots-l_{j-1}-1) + \ldots + (L_0-l_1-\ldots-l_{j-1}-l_j+1)} \right) \cdot \prod_{j=1}^m f_j,
\end{split}
\end{equation*}
with
\begin{equation}\label{Condetprod1}
f_j :=
\begin{vmatrix}
\frac{1}{(L_0-l_1-\ldots-l_{j-1}-1)!} & \frac{1}{(L_0-l_1-\ldots-l_{j-1}-2)!} & \cdots & \frac{1}{(L_0-l_1-\ldots-l_{j-1}-l_j)!} \\
\frac{1}{(L_0-l_1-\ldots-l_{j-1}-2)!} & \frac{1}{(L_0-l_1-\ldots-l_{j-1}-3)!} 
& \cdots & \frac{1}{(L_0-l_1-\ldots-l_{j-1}-l_j-1)!} \\
\vdots & \vdots & \ddots & \vdots  \\
\frac{1}{(L_0-l_1-\ldots-l_{j-1}-l_j)!} & \frac{1}{(L_0-l_1-\ldots-l_{j-1}-l_j-1)!} & \cdots & \cdot \\
\end{vmatrix}_{l_j \times l_j}.
\end{equation}
(Note that $l_1 + \ldots + l_{j-1} = 0$ when $j=1$.) We get an expression for $F_m$ as a product of determinants: $F_m(l_0,\ldots,l_m) = \prod_{j=1}^m f_j$, where the typical element
$$
\frac{1}{(L_0-r_j-c_j)!},
$$
$$r_j \in \{ 0,1, \ldots, l_j-1 \}, \quad c_j \in \{ l_1 + \ldots + l_{j-1}+1, \ldots, l_1 + \ldots + l_j \}, \quad j \in \{1, \ldots, m \},
$$
is zero whenever $L_0-r_j-c_j<0$ (recall \eqref{typicalelement1}). Now
\begin{multline}\label{detVj}
f_j =\frac{1}{(L_0-l_1-\ldots-l_{j-1}-1)!} \cdot \frac{1}{(L_0-l_1-\ldots-l_{j-1}-2)!} \cdots \frac{1}{(L_0-l_1-\ldots-l_{j-1}-l_j)!} \cdot\\
\begin{vmatrix}
(L_0-l_1-\ldots-l_{j-1}-1]_0 & (L_0-l_1-\ldots-l_{j-1}-1]_1 & \cdots & (L_0-l_1-\ldots-l_{j-1}-1]_{l_j-1} \\
(L_0-l_1-\ldots-l_{j-1}-2]_0 & (L_0-l_1-\ldots-l_{j-1}-2]_1 & \cdots & (L_0-l_1-\ldots-l_{j-1}-2]_{l_j-1} \\
\vdots & \vdots & \ddots & \vdots  \\
(L_0-l_1-\ldots-l_{j-1}-l_j]_0 & (L_0-l_1-\ldots-l_{j-1}-l_j]_1 & \cdots & (L_0-l_1-\ldots-l_{j-1}-l_j]_{l_j-1} \\
\end{vmatrix},
\end{multline}
using the falling factorials defined by
$$
(n]_0 := 1; \quad (n]_k := n (n-1) \cdots (n-k+1), \quad k =1, \ldots, n.
$$
This notation works for the zero elements as well, since now our typical element is
$$
(L_0-c_j]_{r_j},
$$
$$
r_j \in \{ 0,1, \ldots, l_j-1 \}, \quad c_j \in \{ l_1 + \ldots + l_{j-1}+1, \ldots, l_1 + \ldots + l_j \}, \quad j \in \{1, \ldots, m \},
$$
and if $L_0-r_j-c_j<0$, then
$$
(L_0-c_j]_{r_j} = (L_0-c_j) (L_0-c_j-1) \cdots (L_0-c_j-r_j+1) = 0.
$$
Most importantly, the falling factorial $(n]_k$ is a polynomial in $n$ of degree $k$. 
This means that the determinants in \eqref{Condetprod1} are essentially polynomial Vandermonde determinants, the polynomials being in this case $(x]_0, (x]_1, \ldots, (x]_{l_j-1}$. The evaluation of \eqref{detVj} follows from Corollary \ref{Kratt}: Let us further denote
$$
c_{j,k} :=  l_1 + \ldots + l_{j-1}+k,
$$
so that
\begin{equation}\label{Cjlaskettu}
\begin{split}
f_j &= \frac{1}{(L_0-c_{j,1})!} \cdot \frac{1}{(L_0-c_{j,2})!} \cdots \frac{1}{(L_0-c_{j,l_j})!} \cdot \prod_{1 \le h < k \le l_j} ((L_0-c_{j,k}) - (L_0-c_{j,h})) \\
&= \frac{1}{(L_0-c_{j,1})!} \cdot \frac{1}{(L_0-c_{j,2})!} \cdots \frac{1}{(L_0-c_{j,l_j})!} \cdot \prod_{1 \le h < k \le l_j} (h-k) \neq 0\\
\end{split}
\end{equation}
for all $j \in \{ 1, \ldots, m \}$.
\end{proof}

\subsubsection{Cramer's rule}\label{Cramersrule}

Remember that $\mathcal{U}[j]$ denotes the minor obtained by removing the $j$th column of $\mathcal{U}$, $j=0,1, \ldots L$. 
By Cramer's rule, the equation
$$
\mathcal{U} \overline{b} = \overline{0},\quad \overline{b}=(b_0,b_1,\ldots,b_L)^T, 
$$
has a solution
\begin{equation}\label{solvector}
[b_0, b_1, \ldots, b_L] = 
\left[ \mathcal{U}[0] : - \mathcal{U}[1] : \ldots : (-1)^L \mathcal{U}[L] \right],
\end{equation}
where one of the minors is appropriately non-zero, as was shown in Lemma \ref{ranklemma}.
Here we use the notation of homogeneous coordinates:
$$
[ h_1 : h_2 : \ldots : h_n ] = \left\{ t(h_1, h_2, \ldots, h_n)^T \; \big| \; t \in\mathbb{Q}[\alpha_1, \ldots, \alpha_m]\setminus \{ 0 \} \right\}
$$
for any $(h_1, h_2, \ldots, h_n)^T\in \left( \mathbb{Q}[\alpha_1, \ldots, \alpha_m] \right)^n\setminus \{ \overline{0} \}$.

To solve our Hermite-Pad\'e approximation problem, we complete the matrix $\mathcal{U} $ into an 
$(L+1) \times (L+1)$ square matrix $\mathcal{S}$ by adding on bottom of it the row
\begin{equation*}\label{newrow}
\begin{pmatrix}
\frac{x^{L_0}}{L_0!} & \frac{x^{L_0-1}}{(L_0-1)!} & \cdots & \frac{x^{l_0}}{l_0!} \\
\end{pmatrix},
\end{equation*}
where $x=\alpha_{m+1}$ is a new, technical variable. Now the Laplace expansion along the last row implies
\begin{equation}\label{rep1}
\det \mathcal{S} =(-1)^{L} \sum_{h=0}^L (-1)^h \mathcal{U}[h] \frac{x^{L_0-h}}{(L_0-h)!}.
\end{equation}

On the other hand, we know how to compute $\det \mathcal{S}$: 
just replace $L_0$ with $L_0+1$ and $m$ with $m+1$ in the proof of 
Lemma \ref{ranklemma} ($l_{m+1} =1$). It tells us that
\begin{equation*}\label{detS}
\begin{split}
\det \mathcal{S} 
&=  F_{m+1}(l_0,\ldots,l_{m+1}) \cdot  \alpha_1^{l_0l_1} \cdots \alpha_m^{l_0l_m} x^{l_0l_{m+1}} 
\prod_{1 \le i < j \le m+1} (\alpha_i - \alpha_j)^{l_i l_j} \\
&=  F_{m+1}(l_0,\ldots,l_{m+1}) \cdot \alpha_1^{l_0l_1} \cdots \alpha_m^{l_0l_m} 
\prod_{1 \le i < j \le m} (\alpha_i - \alpha_j)^{l_i l_j}\, x^{l_0} \prod_{i=1}^m (\alpha_i - x)^{l_i},
\end{split}
\end{equation*}
where $F_{m+1}(l_0,\ldots,l_{m+1})$ is given by \eqref{Condetprod1} and \eqref{Cjlaskettu} (with $L_0+1$ instead of $L_0$ and $m+1$ instead of $m$).

Now
\begin{equation*}\label{taunmaarittely}
x^{l_0} \prod_{i=1}^m (\alpha_i - x)^{l_i} = \sum_{i=l_0}^{L_0} \sigma_i\! \left(\overline l,\overline{\alpha}\right) x^i,
\end{equation*}
where
$$
\sigma_i\! \left(\overline l,\overline{\alpha}\right) = (-1)^{i-l_0} \sum_{l_0+i_1 + \ldots + i_m = i} \binom{l_1}{i_1} \cdots \binom{l_m}{i_m} 
\alpha_1^{l_1-i_1} \cdots \alpha_m^{l_m-i_m}
$$
and $l_0+l_1+ \ldots +l_m=L_0$. Thus
\begin{equation}\label{rep2}
\begin{split}
\det \mathcal{S} &= F_{m+1}(l_0,\ldots,l_{m+1}) \cdot  \alpha_1^{l_0l_1} \cdots \alpha_m^{l_0l_m} \prod_{1 \le i < j \le m} (\alpha_i - \alpha_j)^{l_i l_j} \sum_{i=l_0}^{L_0} \sigma_i \! \left(\overline l,\overline{\alpha}\right) x^i\\
&=F_{m+1}(l_0,\ldots,l_{m+1}) \cdot  \alpha_1^{l_0l_1} \cdots \alpha_m^{l_0l_m}
\prod_{1 \le i < j \le m} (\alpha_i - \alpha_j)^{l_i l_j} \sum_{h=0}^{L} \sigma_{L_0-h}\! \left(\overline l,\overline{\alpha}\right) x^{L_0-h}.
\end{split}
\end{equation}
Comparison of the two representations \eqref{rep1} and \eqref{rep2} (as polynomials in $x$) yields
\begin{equation}\label{hminorexp}
\mathcal{U}[h] = (-1)^{h}(L_0-h)! \sigma_{L_0-h}\! \left(\overline l,\overline{\alpha}\right)\cdot (-1)^{L} F_{m+1} 
\cdot  \alpha_1^{l_0l_1} \cdots \alpha_m^{l_0l_m}
\prod_{1 \le i < j \le m} (\alpha_i - \alpha_j)^{l_i l_j}
\end{equation}
for $h=0,1, \ldots, L$. Hence, \eqref{hminorexp} shows the complete factorisation of all the $L\times L$ minors of 
the matrix $\mathcal{U}$. Secondly, we see that all the above minors are non-zero polynomials in  $\mathbb{Q}[\alpha_1, \ldots, \alpha_m]$.

Because of the homogeneous coordinates in \eqref{solvector}, we get
\begin{equation*}
\begin{split}
[b_0, b_1, \ldots, b_L]
&=\left[ L_0! \sigma_{L_0}\! \left(\overline l,\overline{\alpha}\right) : (L_0-1)! \sigma_{L_0-1}\! \left(\overline l,\overline{\alpha}\right) : \ldots : l_0! \sigma_{l_0}\! \left(\overline l,\overline{\alpha}\right) \right].
\end{split}
\end{equation*}
Then $A_{\overline{l},0} (t,\overline\alpha)$ in \eqref{Padeapprox} becomes
$$
A_{\overline{l},0} (t,\overline\alpha) = \sum_{h=0}^L (L_0-h)! \sigma_{L_0-h}\! \left(\overline l,\overline{\alpha}\right) t^h.
$$
\begin{flushright}
\qed
\end{flushright}

Before going to the 'twin problem' we remark that the seemingly extra parameter $l_0$ is crucial in the applications of type II Hermite-Pad\'e approximations \eqref{Padeapprox} (see \cite{Hermite}, \cite{EMS2016}).

\section{Hermite-Pad\'e approximations to the exponential function: wild case}\label{sec:wild}

\subsection{The twin problem}

As explained in the Introduction, the problem of finding explicit type II Hermite-Pad\'e approximations in the case where 
the degrees of the polynomials are the same but the orders of the remainders are free parameters is yet unsolved. 
We called it the 'twin problem', stated as follows: Find an explicit polynomial $B_{\overline{l},0} (t,\overline\alpha)$, polynomials
$B_{\overline{l},j}(t, \overline{\alpha})$ and remainders $S_{\overline{l},j}(t,\overline\alpha)$, $j=1,\ldots,m$, satisfying
\begin{equation}\label{TwinHermite2}
B_{\overline{l},0} (t,\overline\alpha)  e^{\alpha_j t} - B_{\overline{l},j} (t,\overline\alpha)  =
S_{\overline{l},j} (t,\overline\alpha) , \quad j=1, \ldots, m,
\end{equation}
with $\overline{l} = (l_1, \ldots,  l_m)^T\in\mathbb{Z}_{\ge 1}^m$, $L:=l_1+\ldots+l_m$ and 
\begin{equation}\label{degreeorderwild} 
\begin{cases}
\deg_t B_{\overline{l},j} (t,\overline\alpha)  \le L, &j=0, \ldots, m; \\
L+l_j +1 \le \underset{t=0}{\ord} \, S_{\overline{l},j}(t,\overline\alpha) < \infty, &j=1, \ldots, m.
\end{cases}
\end{equation}
We note that in the setting of the twin problem it is not possible to use the parameter $L_0$ in a similar way as in the tame case.
Namely, by replacing $L$ with $L_0$ in \eqref{degreeorderwild} yields to $L=l_1 + \ldots + l_m$ equations with
$L_0+1$ unknowns. If $L_0>L$, the resulting Pad\'e polynomial $B_{\overline{l},0}(t,\overline\alpha)$ would not be unique.

The twin Pad\'e approximation \eqref{TwinHermite2} is a special case of the following, more general Pad\'e-type approximation:
\begin{equation}\label{DualHermite2}
B_{\overline{\nu},0} (t,\overline{\alpha}) e^{\alpha_j t} - B_{\overline{\nu},j} (t,\overline{\alpha}) = 
S_{\overline{\nu},j} (t,\overline{\alpha}), \quad j=1, \ldots, m,
\end{equation}
where $\overline{\nu} = ( \nu_1, \ldots, \nu_m)^T\in\mathbb{Z}_{\ge 1}^m$, $\nu_1\le l_1,\ldots,\nu_m\le l_m$,
$\nu_1 + \ldots + \nu_m =: M \le L$ and
\begin{equation*}
\begin{cases}
\deg_t B_{\overline{\nu},j} (t,\overline{\alpha}) \le L, &j=0, \ldots, m; \\
L+\nu_j +1 \le \underset{t=0}{\ord} \, S_{\overline{\nu},j} (t,\overline{\alpha})  < \infty, &j=1, \ldots, m.
\end{cases}
\end{equation*}

\subsection{Siegel's lemma}

If $M<L$, there is no unique solution $B_{\overline{\nu},0} (t,\overline{\alpha})$ to the Pad\'e-type approximation 
equations \eqref{DualHermite2}, neither is it known how to find an explicit solution.
Therefore, we now switch to integers and apply Siegel's lemma. 

Choose now $\overline{\alpha} = \overline{a} := (a_1, \ldots, a_m)^T$, where $a_1, \ldots, a_m$ are pairwise different, non-zero integers. Write
\begin{equation}\label{Bnu0-2}
B_{\overline{\nu},0}(t,\overline{a}) = \sum_{h=0}^L c_h \frac{L!}{h!} t^h,
\end{equation}
where $\overline{\nu} := (\nu_1, \ldots, \nu_m )^T$, and the numbers
$\nu_1, \ldots, \nu_m, l_1, \ldots, l_m \in \Z_{\ge 1}$ satisfy
$$
1 \le \nu_j \le l_j, \quad M := \nu_1 + \ldots + \nu_m \le L := l_1 + \ldots + l_m.
$$
Then \eqref{DualHermite2} yields the matrix equation
\begin{equation}\label{unknowneq}
\mathcal{V} \overline{c} = \overline{0}, \quad \overline{c} := (c_0, c_1, \ldots, c_L)^T,
\end{equation}
with
\begin{equation*}\label{AKTmatrixX}
\mathcal{V} = \mathcal{V}(\overline{a}) :=
\begin{pmatrix}
\binom{L + 1}{0} a_1^L & \binom{L + 1}{1} a_1^{L-1} & \cdots & \binom{L + 1}{L - 1} a_1 & \binom{L +1}{L} \\
\binom{L + 2}{0} a_1^L & \binom{L + 2}{1} a_1^{L-1} & \cdots & \binom{L + 2}{L - 1} a_1 & \binom{L +2}{L} \\
\vdots & \vdots & \ddots & \vdots & \vdots \\
\binom{L + \nu_1}{0} a_1^L & \binom{L + \nu_1}{1} a_1^{L-1} & \cdots & \binom{L + \nu_1}{L - 1} a_1 & \binom{L +\nu_1}{L} \\

\vdots & \vdots & \ddots & \vdots & \vdots \\

\vdots & \vdots & \ddots & \vdots & \vdots \\

\binom{L + 1}{0} a_m^L & \binom{L + 1}{1} a_m^{L-1} & \cdots & \binom{L + 1}{L - 1} a_m & \binom{L +1}{L} \\
\binom{L + 2}{0} a_m^L & \binom{L + 2}{1} a_m^{L-1} & \cdots & \binom{L + 2}{L - 1} a_m & \binom{L +2}{L} \\
\vdots & \vdots & \ddots & \vdots & \vdots \\
\binom{L + \nu_m}{0} a_m^L & \binom{L + \nu_m}{1} a_m^{L-1} & \cdots & \binom{L + \nu_m}{L - 1} a_m & \binom{L +\nu_m}{L} \\
\end{pmatrix}_{\!M \times (L+1)},
\end{equation*}
for which Siegel's lemma produces a non-zero integer solution with a non-trivial upper bound (see \eqref{SiegelboundTT} below).
The term $\frac{L!}{h!}$ has been included in \eqref{Bnu0-2} just to get an integer matrix in \eqref{unknowneq} and thus make the use of Siegel's lemma possible.
A standard application of Siegel's lemma is the following Lemma \ref{AKTlemma41}, a particular case of \cite[Lemma 4.1]{AKT}.

\begin{lemma}\label{AKTlemma41}
There exist a non-zero polynomial
\begin{equation*}\label{A0pol}
B_{\overline{\nu},0}(t,\overline{a}) = \sum_{h=0}^L c_h \frac{L!}{h!} t^h \in \Z[t]
\end{equation*}
and non-zero polynomials 
$B_{\overline{\nu},j} (t,\overline{a})  \in \Z [t, \overline{a}]$, $j = 1, \ldots, m$, 
such that
\begin{equation*}\label{Padeapproaunknown}
B_{\overline{\nu},0} (t,\overline{a})  e^{a_j t} - B_{\overline{\nu},j} (t,\overline{a})  = 
S_{\overline{\nu},j} (t,\overline{a}) , \quad j=1, \ldots, m,
\end{equation*}
where
\begin{equation}\label{cond}
\begin{cases}
\deg_t B_{\overline{\nu}, j} (t,\overline{a}) \le L, &j=0, \ldots, m; \\
L+\nu_j +1 \le \underset{t=0}{\ord} \, S_{\overline{\nu},j} (t,\overline{a}) < \infty, &j=1, \ldots, m.
\end{cases}
\end{equation}
Moreover, we have $(c_0, c_1, \ldots, c_L)^T \in \Z^{L+1} \setminus \{ \bar{0} \}$ and
\begin{equation}\label{SiegelboundTT}
| c_h | \le  \left( f^{ML} g^{\frac{M^2}{2}} \right)^{\frac{1}{L+1-M}}, \quad h=0,1, \ldots, L,
\end{equation}
where
$$
f = f(\overline{a}) = \max_{1 \le j \le m} \{ | a_j | + 1 \}, 
\quad g = g(\overline{a}) = \max_{1 \le j \le m} \left\{ 1 + \frac{1}{| a_j |} \right\}.
$$
\end{lemma}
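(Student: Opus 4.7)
My plan is to recast the $\ord_{t=0} S_{\overline\nu,j} \geq L+\nu_j+1$ conditions as a homogeneous linear system with integer coefficients in the $L+1$ unknowns $c_h$, and then invoke Thue-Siegel's lemma (Lemma \ref{Thue-Siegel's}) directly.

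First I would take the ansatz $B_{\overline\nu,0}(t,\overline a) = \sum_{h=0}^L c_h (L!/h!) t^h$ and define $B_{\overline\nu,j}(t,\overline a)$ to be the truncation of $B_{\overline\nu,0}(t,\overline a)\, e^{a_j t}$ to degree $\leq L$; this automatically secures $\deg_t B_{\overline\nu,j} \leq L$ and $\ord_{t=0} S_{\overline\nu,j} \geq L+1$. The remaining condition $\ord_{t=0} S_{\overline\nu,j} \geq L+\nu_j+1$ then amounts to the vanishing of the coefficients of $t^{L+k}$ for $k=1,\ldots,\nu_j$. Writing the coefficient of $t^{L+k}$ in $B_{\overline\nu,0}(t,\overline a)\,e^{a_j t}$ as $\frac{L!}{(L+k)!}\sum_{h=0}^L c_h \binom{L+k}{h} a_j^{L+k-h}$ and cancelling the non-zero factor $L!\, a_j^k/(L+k)!$, I obtain precisely the matrix equation $\mathcal{V}(\overline a)\,\overline c = \overline 0$ of \eqref{unknowneq}; the factor $L!/h!$ in the ansatz was chosen exactly to ensure that $\mathcal{V}(\overline a)$ has integer entries.

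Since $M \leq L < L+1$ and each row of $\mathcal{V}(\overline a)$ contains the non-zero integer entry $a_j^L$, the hypotheses of Lemma \ref{Thue-Siegel's} are satisfied. To estimate the row sums I would majorise the $k$-th row of the $j$-th block by
$$
\sum_{h=0}^L \binom{L+k}{h}|a_j|^{L-h} \;\leq\; \sum_{h=0}^{L+k}\binom{L+k}{h}|a_j|^{L-h} \;=\; |a_j|^{-k}(|a_j|+1)^{L+k} \;=\; f_j^L\, g_j^k,
$$
with $f_j := |a_j|+1$ and $g_j := 1+1/|a_j|$. Multiplying over $j=1,\ldots,m$ and $k=1,\ldots,\nu_j$ yields
$$
\prod_{j=1}^m \prod_{k=1}^{\nu_j} \|\underline{v}_{j,k}\|_1 \;\leq\; \prod_{j=1}^m f_j^{L\nu_j}\, g_j^{\nu_j(\nu_j+1)/2} \;\leq\; f^{LM}\, g^{M^2/2},
$$
after replacing $f_j, g_j$ by the global maxima $f, g$ and bounding $\sum_j \nu_j(\nu_j+1)/2$ by $M^2/2$. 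Feeding this into Lemma \ref{Thue-Siegel's} produces the claimed non-zero vector $\overline c \in \mathbb{Z}^{L+1}$ together with the bound \eqref{SiegelboundTT}.

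Finally, the non-vanishing of each $B_{\overline\nu,j}$ is immediate: letting $h_0 := \min\{h : c_h \neq 0\}$, the coefficient of $t^{h_0}$ in $B_{\overline\nu,0}(t,\overline a)\,e^{a_j t}$ equals $c_{h_0}\, L!/h_0! \neq 0$ and sits at degree $h_0 \leq L$, so it survives in the truncation defining $B_{\overline\nu,j}$. The only step requiring any real bookkeeping is the row-sum product, in particular the passage $\sum_j \nu_j(\nu_j+1)/2 \leq M^2/2$ needed to reach the exponent $M^2/2$ in \eqref{SiegelboundTT}; apart from that, this is a textbook application of Siegel's lemma with no conceptual obstacle.
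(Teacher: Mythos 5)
Your proposal follows essentially the same route as the paper's own proof: same ansatz with the $L!/h!$ normalisation to integralise the coefficient matrix, same reduction to the matrix equation $\mathcal{V}(\overline a)\,\overline c=\overline 0$, the same majorisation of the row sums yielding $f_j^L g_j^{i_j}$, and the same inequality $\sum_j \nu_j(\nu_j+1)\le M^2$ before invoking Lemma~\ref{Thue-Siegel's}. The only addition is your explicit lowest-degree argument for the non-vanishing of $B_{\overline\nu,j}$, which spells out a point the paper states without detail; otherwise the two arguments coincide.
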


\begin{proof} 
For the sake of completeness, we shall reproduce the proof from \cite{AKT}.
Let
\begin{equation}\label{B0eap}
B_{\overline{\nu},0} (t,\overline{a})  e^{a_j t} = \sum_{N=0}^\infty r_{N, j} t^N, \quad j = 1, \ldots, m,
\end{equation}
where
$$
r_{N, j} := \sum_{\substack{h+n=N \\ 0 \leq h \leq L}} c_h \frac{L!}{h!} \frac{a_j^n}{n!}.
$$
Cut the series \eqref{B0eap} after $L + 1$ terms and let
$$
B_{\overline{\nu},j}(t,\overline{a}) := \sum_{N=0}^L r_{N, j} t^N, \quad j = 1, \ldots, m.
$$
Set $r_{L+i_j, j} = 0$ for $i_j = 1, \ldots, \nu_j$, $j = 1, \ldots, m$. Then also
\begin{equation}\label{ajyjch}
0 = \frac{(L + i_j)!}{L!} \frac{1}{a_j^{i_j}} r_{L+i_j, j}
= \sum_{\substack{h+n=L+i_j \\ 0 \leq h \leq L}} \frac{(L + i_j)!}{h! n!} a_j^{n-i_j}  c_h
= \sum_{h=0}^L \binom{L + i_j}{h} a_j^{L-h}  c_h
\end{equation}
for $i_j = 1, \ldots, \nu_j$, $j = 1, \ldots, m$, meaning that we have $M$ equations in $L + 1$ unknowns $c_h, \:h= 0, 1, \ldots, L$, 
with coefficients $\binom{L + i_j}{h} a_j^{L-h}  \in \Z$. These coefficients satisfy the estimate
\begin{align*}
A_{j, i_j} &:= \sum_{h=0}^L \left| \binom{L + i_j}{h} a_j^{L-h}  \right|
< \frac{1}{| a_j |^{i_j}} \sum_{h=0}^{L+i_j} \binom{L + i_j}{h} \left| a_j \right|^{L+i_j-h} \\
&= \frac{1}{| a_j |^{i_j}} \left( | a_j | + 1 \right)^{L+i_j}
= \left( | a_j | + 1 \right)^L \left( 1 + \frac{1}{| a_j |} \right)^{i_j},
\end{align*}
when  $i_j = 1, \ldots, \nu_j$, $j = 1, \ldots, m$. (Recall that in order to apply Lemma \ref{Thue-Siegel's}, 
the product of the row sums \eqref{rivisumma} is needed.) Then
\begin{align*}
\prod_{j, i_j} A_{j, i_j} &< \left( \prod_{j, i_j} \left( | a_j | + 1 \right)^L \right) \prod_{j, i_j} \left( 1 + \frac{1}{| a_j |} \right)^{i_j} \\
&= \left( \prod_j \left( | a_j | + 1 \right)^{\nu_j L} \right) \prod_j \left( 1 + \frac{1}{| a_j |} \right)^{\frac{\nu_j (\nu_j + 1)}{2}}
\leq f^{ML} g^{\frac{M^2}{2}}.
\end{align*}
The last step follows by employing the definition $M = \nu_1 + \ldots + \nu_m$, which implies 
$\sum_{j=1}^m \nu_j (\nu_j + 1) = \sum_{j=1}^m \nu_j^2 + \sum_{j=1}^m \nu_j \leq M^2$. 
Thus by Lemma \ref{Thue-Siegel's} there exists a solution 
$\overline{c} =(c_0, c_1, \ldots, c_L)^T \in \Z^{L+1} \setminus \{ \bar{0} \}$ to the group of $M$ equations derived in \eqref{ajyjch} with
$$
| c_h | \leq \left( f^{ML} g^{\frac{M^2}{2}} \right)^{\frac{1}{L+1-M}} , \quad h = 0, 1, \ldots, L.
$$

Writing
$$
S_{\overline{\nu},j} (t,\overline{a})  := \sum_{N = L + \nu_j +1}^\infty r_{N, j} t^N
$$
we get
$$
B_{\overline{\nu},0}(t,\overline{a}) e^{a_j t} - B_{\overline{\nu},j} (t,\overline{a})  = 
S_{\overline{\nu},j} (t,\overline{a}) , \quad j = 1, \ldots, m.
$$
Here $B_{\overline{\nu},j} (t,\overline{a})$ are non-zero polynomials for all $j = 0, 1, \ldots, m$, 
since the solution $\overline{c}$ is a non-zero vector. Conditions \eqref{cond} are also satisfied, as the series $S_{\overline{\nu},j} (t)$ is non-zero too.
\end{proof}

\subsection{The Bombieri-Vaaler version of Siegel's lemma}

Next we are going to examine how to improve the estimate in \eqref{SiegelboundTT} by using the Bombieri-Vaaler version of 
Siegel's lemma (Lemma \ref{Bombieri-Vaaler}).
Assuming that the rank of $\mathcal{V}(\overline{a})$ over $\mathbb{Z}$ is $M$, 
it follows from Lemma \ref{Bombieri-Vaaler} that equation \eqref{unknowneq} has a solution 
$\overline{c}=(c_0,\ldots,c_L)^T \in \Z^{L+1} \setminus \{ \overline{0} \}$ with
$$
\left\| \overline{c} \right\|_\infty:= \max_{0 \le k \le L} |c_{k}|
 \le \left( \frac{\sqrt{\det (\mathcal{V}(\overline{a})\mathcal{V}(\overline{a})^T )}}{D(\overline{a})} \right)^{\frac{1}{N-M}}
\le \left( \frac{\prod\limits_{m=1}^{M}\|\underline{w}_m\|_{1}}{D(\overline{a})}\right)^{\frac{1}{N-M}},
$$
where $D(\overline{a})$ is the greatest common divisor of all the $M \times M$ minors of $\mathcal{V}(\overline{a})$,
$\underline{w}_m$ denotes the $m$th row of the matrix $\mathcal{V}(\overline{a})$ and $\left\|(v_1,\ldots,v_N)^T \right\|_{1}:=|v_1|+\ldots+|v_N|$.

Therefore, the bound in \eqref{SiegelboundTT} can be improved if
we can find a relatively big common factor from the $M \times M$ minors of the matrix $\mathcal{V}(\overline{a})$.
Such a factor indeed exists, as is stated in Theorem \ref{IntegerFactortheorem}.
Theorem \ref{IntegerFactortheorem} is a direct corollary of Theorem \ref{polynomialfactortheorem} which will be presented and proved in the following section. 

\subsection{Common factor}

To prove Theorem \ref{IntegerFactortheorem} for an arbitrary $m$-tuple of integers, we need to treat the determinants as polynomials. 
Therefore we use our original variables $\alpha_i$ and consider the matrix 
$\mathcal{V}(\overline{\alpha}) \in \mathcal{M}_{M \times (L+1)} (\mathbb{Z}[\alpha_1, \ldots, \alpha_m])$.

Let $\widehat{D}(\overline{\alpha}) \in \Z[\alpha_1, \ldots, \alpha_m]$ be the greatest common divisor of the $M \times M$ minors of the matrix $\mathcal{V}(\overline{\alpha})$ (we assume that $\widehat{D}(\overline{\alpha})$ is a primitive polynomial). 
Finding $\widehat{D}(\overline{\alpha})$ means that we are interested in the divisors of an arbitrary $M \times M$ minor of $\mathcal{V}(\overline{\alpha})$, 
denoted by
\begin{equation*}\label{minorAKT}
\det \mathcal{W} :=
\begin{vmatrix}
\binom{L + 1}{e_1} \alpha_1^{L-e_1} & \binom{L + 1}{e_2} \alpha_1^{L-e_2} & \cdots & \binom{L + 1}{e_M} \alpha_1^{L-e_M}\\
\vdots & \vdots & \ddots & \vdots \\
\binom{L + \nu_1}{e_1} \alpha_1^{L-e_1} & \binom{L + \nu_1}{e_2} \alpha_1^{L-e_2} & \cdots & \binom{L + \nu_1}{e_M} \alpha_1^{L-e_M}\\

\vdots & \vdots & \ddots & \vdots \\
\vdots & \vdots & \ddots & \vdots \\

\binom{L + 1}{e_1} \alpha_m^{L-e_1} & \binom{L + 1}{e_2} \alpha_m^{L-e_2} & \cdots & \binom{L + 1}{e_M} \alpha_m^{L-e_M} \\
\vdots & \vdots & \ddots & \vdots \\
\binom{L + \nu_m}{e_1} \alpha_m^{L-e_1} & \binom{L + \nu_m}{e_2} \alpha_m^{L-e_2} & \cdots & \binom{L + \nu_m}{e_M} \alpha_m^{L-e_M} \\
\end{vmatrix}
= \det
\begin{pmatrix}
W_1 \\
W_2 \\
\vdots \\
W_m
\end{pmatrix},
\end{equation*}
where $0 \le e_1 < e_2 < \ldots < e_M \le L$ and
\begin{equation}\label{blockMj}
W_j :=
\begin{pmatrix}
\binom{L + 1}{e_1} \alpha_j^{L-e_1} & \binom{L + 1}{e_2} \alpha_j^{L-e_2} & \cdots & \binom{L + 1}{e_M} \alpha_j^{L-e_M}\\
\vdots & \vdots & \ddots & \vdots \\
\binom{L + \nu_j}{e_1} \alpha_j^{L-e_1} & \binom{L + \nu_j}{e_2} \alpha_j^{L-e_2} & \cdots & \binom{L + \nu_j}{e_M} \alpha_j^{L-e_M}
\end{pmatrix}_{\!\nu_j \times M},
\quad j=1, \ldots, m,
\end{equation}
is the $j$th block of the matrix $\mathcal{W}$ defining our arbitrary minor. Clearly $\det \mathcal{W} \in \Z[\alpha_1, \ldots, \alpha_m]$.

\begin{theorem}\label{polynomialfactortheorem}
We have
\begin{equation}\label{polynomialfactor}
\left( \prod_{1 \le  j \le m} \alpha_j^{\binom{\nu_j}{2}} \right)
\prod_{1 \le i < j \le m} (\alpha_i - \alpha_j)^{\min \left\{ \nu_i^2,\, \nu_j^2 \right\}} 
\underset{\mathbb{Z}[\alpha_1, \ldots, \alpha_m]}{\bigg|} \widehat{D}(\overline{\alpha}).
\end{equation}
\end{theorem}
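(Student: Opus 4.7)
The argument splits into proving two divisibilities—one for the monomial part $\prod_{j}\alpha_j^{\binom{\nu_j}{2}}$ and one for the Vandermonde-type part $\prod_{i<j}(\alpha_i-\alpha_j)^{\min\{\nu_i^2,\nu_j^2\}}$—for every $M\times M$ minor $\det\mathcal{W}$ of $\mathcal{V}(\overline{\alpha})$. Since the $\alpha_j$'s and the $(\alpha_i-\alpha_j)$'s are distinct prime elements of the UFD $\mathbb{Z}[\alpha_1,\ldots,\alpha_m]$, coprimality then forces their product to divide every minor, and hence $\widehat{D}(\overline{\alpha})$.

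For the monomial factor, I would apply the generalised Laplace expansion of Lemma \ref{manyblocksexpansion} (after a transposition, so that the fixed partition corresponds to the block-rows of $\mathcal{W}$). This writes $\det\mathcal{W}$ as a signed sum over column partitions $(H_1,\ldots,H_m)$ with $|H_j|=\nu_j$, each summand being (up to sign) a product of $\nu_j\times\nu_j$ minors of the individual blocks $W_j$. Pulling $\alpha_j^{L-e_k}$ out of each column $k\in H_j$ of such a minor leaves an $\alpha_j$-exponent $\sum_{k\in H_j}(L-e_k)$; this sum is minimised by taking the $\nu_j$ largest $e_k$'s, and the bound $e_{M-\nu_j+s}\le L-\nu_j+s$ gives the lower bound $\binom{\nu_j}{2}$. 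Since the $\alpha_j$'s for different $j$ are pairwise coprime, every summand—and hence $\det\mathcal{W}$—is divisible by $\prod_j\alpha_j^{\binom{\nu_j}{2}}$.

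For the Vandermonde-type factor, the plan is to recognise $\mathcal{W}$, after suitable row and column scalings, as an instance of the matrix $\mathcal{B}$ of Lemma \ref{blockpolyvande2}. Set $N:=\max_j\nu_j$ and introduce the polynomials
\begin{equation*}
p_k(\alpha):=\frac{\alpha^{L+N-e_k}}{(L+N-e_k)!}\in\mathbb{Q}[\alpha],\qquad k=1,\ldots,M,
\end{equation*}
for which $p_k^{(N-r)}(\alpha)=\alpha^{L+r-e_k}/(L+r-e_k)!$. The resulting key identity
\begin{equation*}
\binom{L+r}{e_k}\alpha_j^{L+r-e_k}\;=\;\frac{(L+r)!}{e_k!}\,p_k^{(N-r)}(\alpha_j)
\end{equation*}
shows that multiplying row $r$ of each block $W_j$ by $\alpha_j^r/(L+r)!$ and column $k$ by $e_k!$ turns $\mathcal{W}$ into a matrix $\mathcal{W}'''$ whose $j$th block has $(r,k)$-entry $p_k^{(N-r)}(\alpha_j)$, for $r=1,\ldots,\nu_j$. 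This is exactly the setup of Lemma \ref{blockpolyvande2} with $n_{\max}=N$ and $n_j=\nu_j$, so the lemma yields $\prod_{i<j}(\alpha_i-\alpha_j)^{\min\{\nu_i^2,\nu_j^2\}}\mid\det\mathcal{W}'''$ in $\mathbb{Q}[\overline{\alpha}]$.

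Reading the scalings backwards gives $\prod_j\alpha_j^{\nu_j(\nu_j+1)/2}\det\mathcal{W}=\bigl(\prod_j\prod_{r=1}^{\nu_j}(L+r)!\bigr)\det\mathcal{W}'''\big/\prod_k e_k!$ in $\mathbb{Q}[\overline{\alpha}]$. The left-hand monomial is coprime to every $(\alpha_i-\alpha_j)$ in the UFD $\mathbb{Z}[\overline{\alpha}]$, so the Vandermonde factor divides $\det\mathcal{W}$ in $\mathbb{Q}[\overline{\alpha}]$; since it is a primitive polynomial in $\mathbb{Z}[\overline{\alpha}]$, Gauss's lemma upgrades this to divisibility in $\mathbb{Z}[\overline{\alpha}]$. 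The main obstacle I anticipate is conceptual rather than computational: spotting the scaling that matches $\mathcal{W}$ to the $\mathcal{B}$-shape of Case B. Once this identification is made, everything else is bookkeeping inside $\mathbb{Z}[\overline{\alpha}]$, guided by the prior lemmas.
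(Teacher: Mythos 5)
Your proposal is correct and follows essentially the same path as the paper's proof: the same generalised Laplace expansion for the monomial factor, the identical row-and-column scaling $\alpha_j^{r}/(L+r)!$ and $e_k!$ to turn $\mathcal{W}$ into the Case B shape of Lemma \ref{blockpolyvande2}, and the same coprimality-plus-primitivity argument to pass from $\mathbb{Q}[\overline{\alpha}]$ back to $\mathbb{Z}[\overline{\alpha}]$.
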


\begin{proof}
The factor $\alpha_1^{\binom{\nu_1}{2}} \cdots \alpha_m^{\binom{\nu_m}{2}}$ follows by using 
the generalised minor expansion (Lemma \ref{manyblocksexpansion}) with $\nu_j \times \nu_j$, $j=1, \ldots, m$, minors. 
According to Lemma \ref{manyblocksexpansion}, then $\det \mathcal{W}$ is a sum of products of $\nu_j \times \nu_j$ minors, 
each taken from block $W_j$. Looking at \eqref{blockMj}, we see that any $\nu_j \times \nu_j$ minor of $W_j$ contains at least the factor
$$
\alpha_j^{(L-e_M) + (L-e_{M-1}) + \ldots + (L-e_{M-\nu_j+1})} \ge \alpha_j^{0+1+\ldots+\nu_j-1} = \alpha_j^{\binom{\nu_j}{2}}.
$$
Thus
$$
\prod_{1 \le  j \le m} \alpha_j^{\binom{\nu_j}{2}} \underset{\Z [\alpha_1, \ldots, \alpha_m]}{\bigg|} \det \mathcal{W}.
$$

It remains to show the Vandermonde-type factor. If each row $(j, i_j)$ of $\det \mathcal{W}$ is multiplied by $\frac{\alpha_j^{i_j}}{(L+i_j)!}$ 
and column $k$ by $e_k!$, we get a new determinant
\begin{align*}
\det \mathcal{\widehat{W}} &:= \left( \left( \prod_{j=1}^m \prod_{i_j = 1}^{\nu_j} \frac{\alpha_j^{i_j}}{(L+i_j)!} \right) \prod_{k=1}^M e_k! \right) \det \mathcal{W} \\
&=
\begin{vmatrix}
\frac{\alpha_1^{L+1-e_1}}{(L+1 -e_1)!} & \frac{\alpha_1^{L+1-e_2}}{(L+1-e_2)!} & \cdots & \frac{\alpha_1^{L+1-e_M}}{(L+1-e_M)!}\\
\vdots & \vdots & \ddots & \vdots \\
\vdots & \vdots & \ddots & \vdots \\
\frac{\alpha_m^{L+\nu_m-e_1}}{(L+\nu_m -e_1)!} & \frac{\alpha_m^{L+\nu_m-e_2}}{(L+\nu_m-e_2)!} & \cdots & \frac{\alpha_m^{L+\nu_m-e_M}}{(L+\nu_m-e_M)!}\\
\end{vmatrix}.
\end{align*}
The $j$th block of $\mathcal{\widehat{W}}$ looks like
\begin{align*}
\widehat{W}_j &:=
\begin{pmatrix}
\frac{\alpha_j^{L+1-e_1}}{(L+1 -e_1)!} & \frac{\alpha_j^{L+1-e_2}}{(L+1-e_2)!} & \cdots & \frac{\alpha_j^{L+1-e_M}}{(L+1-e_M)!}\\
\frac{\alpha_j^{L+2-e_1}}{(L+2 -e_1)!} & \frac{\alpha_j^{L+2-e_2}}{(L+2-e_2)!} & \cdots & \frac{\alpha_j^{L+2-e_M}}{(L+2-e_M)!}\\
\vdots & \vdots & \ddots & \vdots \\
\frac{\alpha_j^{L+\nu_j-e_1}}{(L+\nu_j -e_1)!} & \frac{\alpha_j^{L+\nu_j-e_2}}{(L+\nu_j-e_2)!} & \cdots 
& \frac{\alpha_j^{L+\nu_j-e_M}}{(L+\nu_j-e_M)!}\\
\end{pmatrix}_{\nu_j \times M}. \\
\end{align*}
Notice that upper rows are derivatives of the last row, whence we may apply Lemma \ref{blockpolyvande2}, arriving at the common factor
\begin{equation}\label{jakaadetM}
\prod_{1  \le i < j \le m} (\alpha_i - \alpha_j)^{\min \left\{ \nu_i^2, \nu_j^2 \right\}} \underset{\Q [\alpha_1, \ldots, \alpha_m]}{\bigg|} \det \mathcal{\widehat{W}}.
\end{equation}

We have the connection
\begin{equation}\label{connection1}
\det \mathcal{\widehat{W}} = \left( \left( \prod_{j=1}^m \prod_{i_j = 1}^{\nu_j} \frac{\alpha_j^{i_j}}{(L+i_j)!} \right)
\prod_{k=1}^M e_k! \right) \det \mathcal{W},
\end{equation}
so
\begin{equation}\label{connection}
\left( \left( \prod_{j=1}^m \prod_{i_j = 1}^{\nu_j} (L+i_j)! \right) \prod_{k=1}^M \frac{1}{e_k!} \right) \det \mathcal{\widehat{W}}
= \left( \prod_{j=1}^m \prod_{i_j = 1}^{\nu_j} \alpha_j^{i_j} \right) \det \mathcal{W} \in \Z[\alpha_1, \ldots, \alpha_m].
\end{equation}
Since the polynomials
$$
\prod_{j=1}^m \prod_{i_j = 1}^{\nu_j} \alpha_j^{i_j} = \prod_{j=1}^m \alpha_j^{\binom{\nu_j+1}{2}} \quad \text{and} \quad \prod_{1  \le i < j \le m} (\alpha_i - \alpha_j)^{\min \left\{ \nu_i^2, \nu_j^2 \right\}}
$$
are relatively prime, property \eqref{jakaadetM} and equation \eqref{connection} now imply that
$$
\prod_{1  \le i < j \le m} (\alpha_i - \alpha_j)^{\min \left\{ \nu_i^2, \nu_j^2 \right\}} 
\underset{\Q [\alpha_1, \ldots, \alpha_m]}{\bigg|} \det \mathcal{W}.
$$
Hence
$$
\det \mathcal{W} = Q \cdot \left( \prod_{1 \le  j \le m} \alpha_j^{\binom{\nu_j}{2}} \right)
\prod_{1  \le i < j \le m} (\alpha_i - \alpha_j)^{\min \left\{ \nu_i^2, \nu_j^2 \right\}}
$$
for some $Q = Q(\alpha_1, \ldots, \alpha_m) \in \Q[\alpha_1, \ldots, \alpha_m]$.
But since $\det \mathcal{W} \in \Z[\alpha_1, \ldots, \alpha_m]$ and the polynomials 
$\prod_{1 \le  j \le m} \alpha_j^{\binom{\nu_j}{2}}$,
$\prod_{1  \le i < j \le m} (\alpha_i - \alpha_j)^{\min \left\{ \nu_i^2, \nu_j^2 \right\}}\in \Z[\alpha_1, \ldots, \alpha_m]$ 
are primitive, we have $Q \in \Z[\alpha_1, \ldots, \alpha_m]$. Hence
$$
\prod_{1  \le i < j \le m} (\alpha_i - \alpha_j)^{\min \left\{ \nu_i^2,\, \nu_j^2 \right\}} 
\underset{\Z [\alpha_1, \ldots, \alpha_m]}{\bigg|} \det \mathcal{W}.
$$
\end{proof}

\begin{proof}[Proof of Theorem \ref{IntegerFactortheorem}]
The integer factor \eqref{integerfactor} follows from Theorem \ref{polynomialfactortheorem} by choosing $\overline{\alpha}=(a_1, \ldots, a_m)^T$ and noticing that $\widehat{D}(\overline{a}) \underset{\mathbb{Z}}{\big|} D(\overline{a})$.
\end{proof}

\subsection{Rank}

In order to benefit from Theorem \ref{IntegerFactortheorem}, we should have $\rank \mathcal{V}(\overline{a})=M$ over $\mathbb{Z}$.
Proving this seems to be out of reach at the moment. (The polynomials in 
\ref{minorappendix} illustrate the problem: some of the minors could clearly vanish when the alphas are given integer values, so one would have to show that this does not happen to all the minors simultaneously.)
However, as a step towards the solution we shall show that  
$\rank \mathcal{V}(\overline{\alpha}) = M$ over the polynomial ring $\Q [\alpha_1, \ldots, \alpha_m]$. 

\begin{lemma}\label{ranklemma2}
We have $\rank \mathcal{V}(\overline{\alpha}) = M$ over the polynomial ring $\Q [\alpha_1, \ldots, \alpha_m]$.
\end{lemma}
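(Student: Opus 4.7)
The plan is to exhibit an explicit $M \times M$ minor of $\mathcal{V}(\overline{\alpha})$ which is nonzero in $\mathbb{Q}[\alpha_1, \ldots, \alpha_m]$; since $\mathcal{V}(\overline{\alpha})$ has exactly $M$ rows, this forces $\rank \mathcal{V}(\overline{\alpha}) = M$. I will work with the minor $\det W$ coming from the rightmost $M$ columns, i.e.\ $e_k = L - M + k$ for $k = 1, \ldots, M$. By the row/column rescaling \eqref{connection1}, $\det W$ differs from $\det \widehat{W}$ by a nonzero monomial factor, so it is enough to show $\det \widehat{W} \neq 0$; here the $(i_j, k)$-entry of block $\widehat{W}_j$ is the nonzero monomial $\alpha_j^{M + i_j - k}/(M + i_j - k)!$ (all exponents are $\ge 1$ for this column choice).

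Next I would apply the generalised minor expansion (Lemma~\ref{manyblocksexpansion}) to $\det \widehat{W}$, writing it as a signed sum over ordered partitions $(H_1, \ldots, H_m)$ of $\{1, \ldots, M\}$ with $|H_j| = \nu_j$ of the products $\prod_j \det \widehat{W}_{H_j, j}$. A direct Leibniz calculation shows that each block minor $\det \widehat{W}_{H_j, j}$ is $\alpha_j$-homogeneous of degree $\nu_j M + \binom{\nu_j + 1}{2} - \sum_{h \in H_j} h$. The key idea is to isolate the single monomial $\prod_j \alpha_j^{\nu_j(M - c_{j-1})}$, where $c_j := \nu_1 + \ldots + \nu_j$; this monomial arises from a partition precisely when $\sum_{h \in H_j} h = \binom{\nu_j + 1}{2} + \nu_j c_{j-1}$ for every $j$.

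The combinatorial heart of the argument---and what I expect to be the main obstacle---is that this system of sum conditions forces a unique ordered partition, namely $H_j^* := \{c_{j-1}+1, c_{j-1}+2, \ldots, c_j\}$. Indeed, $\sum H_1 = \binom{\nu_1+1}{2}$ is the minimum possible sum of $\nu_1$ distinct elements of $\{1, \ldots, M\}$, forcing $H_1 = \{1, \ldots, \nu_1\}$; once $H_1 = H_1^*, \ldots, H_{j-1} = H_{j-1}^*$ are fixed, the same minimality argument inside $\{c_{j-1}+1, \ldots, M\}$ forces $H_j = H_j^*$ by induction on $j$. Hence only the partition $(H_1^*, \ldots, H_m^*)$ contributes to the coefficient of $\prod_j \alpha_j^{\nu_j(M - c_{j-1})}$ in $\det \widehat{W}$, and the relevant signs $\sign(\tau_{I_1,\ldots,I_m})$ and $\sign(\tau_{H_1^*,\ldots,H_m^*})$ are both $+1$ because the concatenations $I_1, \ldots, I_m$ and $H_1^*, \ldots, H_m^*$ are already in increasing order.

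It remains to check that this coefficient $\prod_j f_j$ is nonzero, where $f_j$ is the constant coefficient of $\det \widehat{W}_{H_j^*, j}$ as a polynomial in $\alpha_j$. Setting $N_j := M - c_{j-1} \ge \nu_j$, the block determinant $\det \widehat{W}_{H_j^*, j}$ factors as $\alpha_j^{\nu_j N_j}$ times $\det \bigl( 1/(N_j + i - r)! \bigr)_{i, r = 1}^{\nu_j}$. Pulling $1/(N_j + i - 1)!$ out of row $i$ turns the latter into a polynomial Vandermonde determinant in the falling factorials $(x]_0, (x]_1, \ldots, (x]_{\nu_j - 1}$ evaluated at $x = 1, 2, \ldots, \nu_j$, whose value equals $1! \cdot 2! \cdots (\nu_j - 1)! \neq 0$ by Corollary~\ref{Kratt}. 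Thus each $f_j$ is a nonzero rational, so $\det \widehat{W} \neq 0$, and the lemma follows.
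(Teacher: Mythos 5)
Your proposal is correct and follows essentially the same strategy as the paper: both isolate the same (up to the monomial rescaling between $\det \mathcal{W}$ and $\det \widehat{\mathcal{W}}$) block-diagonal monomial, identify its coefficient as a product of $\nu_j \times \nu_j$ minors, and evaluate each as a polynomial Vandermonde determinant in falling factorials via Corollary~\ref{Kratt}. Your step-by-step minimality argument justifying why the chosen monomial arises from a unique ordered partition is a welcome spelling-out of what the paper states as ``arises uniquely from the block diagonal'' without further explanation.
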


\begin{proof}
We show that one of the $M \times M$ minors of $\mathcal{V}(\overline{\alpha})$ is not the zero polynomial, in this case the rightmost one:
\begin{equation}\label{detM0}
\begin{vmatrix}
\binom{L + 1}{L-(M-1)} \alpha_1^{M-1} & \cdots & \binom{L + 1}{L - 1} \alpha_1 & \binom{L +1}{L} \\
\binom{L + 2}{L-(M-1)} \alpha_1^{M-1} & \cdots & \binom{L + 2}{L - 1} \alpha_1 & \binom{L +2}{L} \\
\vdots & \ddots & \vdots & \vdots \\
\binom{L + \nu_1}{L-(M-1)} \alpha_1^{M-1} & \cdots & \binom{L + \nu_1}{L - 1} \alpha_1 & \binom{L +\nu_1}{L} \\
\vdots & \ddots & \vdots & \vdots \\

\vdots & \ddots & \vdots & \vdots \\
\binom{L + 1}{L-(M-1)} \alpha_m^{M-1} & \cdots & \binom{L + 1}{L - 1} \alpha_m & \binom{L +1}{L} \\

\vdots & \ddots & \vdots & \vdots \\

\binom{L + \nu_m}{L-(M-1)} \alpha_m^{M-1} & \cdots & \binom{L + \nu_m}{L - 1} \alpha_m & \binom{L +\nu_m}{L} \\
\end{vmatrix}.
\end{equation}
 The monomial
$$
\alpha_1^{\binom{\nu_1}{2} + \nu_1 \nu_2 + \nu_1 \nu_3 + \ldots + \nu_1 \nu_m} 
\alpha_2^{\binom{\nu_2}{2} + \nu_2 \nu_3 + \nu_2 \nu_4 + \ldots + \nu_2 \nu_m} \cdots 
\alpha_{m-1}^{\binom{\nu_{m-1}}{2} + \nu_{m-1} \nu_m} \alpha_m^{\binom{\nu_m}{2}}
$$
arises uniquely from the block diagonal when using the generalised minor expansion with $\nu_j \times \nu_j$, $j=1, \ldots, m$, 
minors to expand \eqref{detM0}.
We show that its coefficient, denoted by $G$, is non-zero, therefore implying that the determinant \eqref{detM0} is a non-zero polynomial.

The generalised minor expansion (Lemma \ref{manyblocksexpansion}) gives
\begin{equation*}
\begin{split}
&G \cdot 
\alpha_1^{\binom{\nu_1}{2} + \nu_1 \nu_2 + \nu_1 \nu_3 + \ldots + \nu_1 \nu_m} 
\alpha_2^{\binom{\nu_2}{2} + \nu_2 \nu_3 + \nu_2 \nu_4 + \ldots + \nu_2 \nu_m} \cdots 
\alpha_{m-1}^{\binom{\nu_{m-1}}{2} + \nu_{m-1} \nu_m} \alpha_m^{\binom{\nu_m}{2}} \\
= \;&\prod_{j=1}^m
\begin{vmatrix}
\binom{L + 1}{L-M+(\nu_1+\ldots+\nu_{j-1}+1)} \alpha_j^{M-(\nu_1+\ldots+\nu_{j-1}+1)} & \cdots & \binom{L + 1}{L - M+(\nu_1+\ldots+\nu_j)} \alpha_j^{M-(\nu_1+\ldots+\nu_j)}\\
\binom{L + 2}{L-M+(\nu_1+\ldots+\nu_{j-1}+1)} \alpha_j^{M-(\nu_1+\ldots+\nu_{j-1}+1)} & \cdots & \binom{L + 2}{L -M+(\nu_1+\ldots+\nu_j)} \alpha_j^{M-(\nu_1+\ldots+\nu_j)}\\
\vdots & \ddots & \vdots \\
\binom{L + \nu_j}{L-M+(\nu_1+\ldots+\nu_{j-1}+1)} \alpha_j^{M-(\nu_1+\ldots+\nu_{j-1}+1)} & \cdots & \binom{L + \nu_j}{L - M+(\nu_1+\ldots+\nu_j)} \alpha_j^{M-(\nu_1+\ldots+\nu_j)}\\
\end{vmatrix}_{\nu_j \times \nu_j} \\
= \;
&\left( \prod_{j=1}^m \alpha_j^{\binom{\nu_j}{2} + \nu_j (\nu_{j+1}+ \ldots + \nu_m)} \right) \cdot \prod_{j=1}^m g_j,
\end{split}
\end{equation*}
where
\begin{equation}\label{Condetprod}
g_j :=
\begin{vmatrix}
\binom{L + 1}{L-M+(\nu_1+\ldots+\nu_{j-1}+1)} & \cdots & \binom{L + 1}{L - M+(\nu_1+\ldots+\nu_j)} \\
\binom{L + 2}{L-M+(\nu_1+\ldots+\nu_{j-1}+1)} & \cdots & \binom{L + 2}{L -M+(\nu_1+\ldots+\nu_j)} \\
\vdots & \ddots & \vdots \\
\binom{L + \nu_j}{L-M+(\nu_1+\ldots+\nu_{j-1}+1)} & \cdots & \binom{L + \nu_j}{L - M+(\nu_1+\ldots+\nu_j)} \\
\end{vmatrix}_{\nu_j \times \nu_j}.
\end{equation}
(Note that $\nu_1 + \ldots + \nu_{j-1} = 0$ when $j=1$.)
We get an expression for $G$ as a product of determinants: $G = \prod_{j=1}^m g_j$. To see the structure of these determinants more clearly, we write the binomial coefficients in different form:
$$
\binom{n}{k} = \frac{(n]_k}{k!},
$$
where
$$
(n]_0 := 1; \quad (n]_k := n (n-1) \cdots (n-k+1), \quad k =1, \ldots, n.
$$
As stated before, the falling factorial $(n]_k$ is a polynomial in $n$ of degree $k$. Just as in the proof of Lemma \ref{ranklemma}, the determinants in \eqref{Condetprod} are thus essentially polynomial Vandermonde matrices, the polynomials being $(x]_0, (x]_1, \ldots, (x]_{\nu_j-1}$. Again we may exploit Corollary \ref{Kratt} and get
\begin{equation*}
\begin{split}
g_j = &\begin{vmatrix}
\binom{L + 1}{L-M+(\nu_1+\ldots+\nu_{j-1}+1)} & \cdots & \binom{L + 1}{L - M+(\nu_1+\ldots+\nu_j)} \\
\binom{L + 2}{L-M+(\nu_1+\ldots+\nu_{j-1}+1)} & \cdots & \binom{L + 2}{L -M+(\nu_1+\ldots+\nu_j)} \\
\vdots & \ddots & \vdots \\
\binom{L + \nu_j}{L-M+(\nu_1+\ldots+\nu_{j-1}+1)} & \cdots & \binom{L + \nu_j}{L - M+(\nu_1+\ldots+\nu_j)} \\
\end{vmatrix}_{\nu_j \times \nu_j} \\
= \;&\frac{1}{(L-M+(\nu_1+\ldots+\nu_{j-1}+1))!} \cdots \frac{1}{(L - M+(\nu_1+\ldots+\nu_j))!} \cdot \\
&\begin{vmatrix}
(L+1]_{L-M+(\nu_1+\ldots+\nu_{j-1}+1)} & \cdots & (L+1]_{L - M+(\nu_1+\ldots+\nu_j)} \\
(L+2]_{L-M+(\nu_1+\ldots+\nu_{j-1}+1)} & \cdots & (L+2]_{L - M+(\nu_1+\ldots+\nu_j)} \\
\vdots & \ddots & \vdots \\
(L+\nu_j]_{L-M+(\nu_1+\ldots+\nu_{j-1}+1)} & \cdots & (L+\nu_j]_{L - M+(\nu_1+\ldots+\nu_j)} \\
\end{vmatrix}_{\nu_j \times \nu_j} \\
= \;&\frac{1}{(L-M+(\nu_1+\ldots+\nu_{j-1}+1))!} \cdots \frac{1}{(L - M+(\nu_1+\ldots+\nu_j))!} \cdot \\
&(L+1]_{L-M+(\nu_1+\ldots+\nu_{j-1}+1)} \cdots (L+\nu_j]_{L-M+(\nu_1+\ldots+\nu_{j-1}+1)} \cdot \\
&\begin{vmatrix}
1 & (M-(\nu_1 + \ldots + \nu_{j-1})]_1 & \cdots & (M-(\nu_1 + \ldots + \nu_{j-1})]_{\nu_j-1} \\
1 & (M+1-(\nu_1 + \ldots + \nu_{j-1})]_1 & \cdots & (M+1-(\nu_1 + \ldots + \nu_{j-1})]_{\nu_j-1} \\
\vdots & & \ddots & \vdots \\
1 & (M+\nu_j-1-(\nu_1 + \ldots + \nu_{j-1})]_1 & \cdots & (M+\nu_j-1-(\nu_1 + \ldots + \nu_{j-1})]_{\nu_j-1} \\
\end{vmatrix}_{\nu_j \times \nu_j} \\
= \;&\frac{1}{(L-M+(\nu_1+\ldots+\nu_{j-1}+1))!} \cdots \frac{1}{(L - M+(\nu_1+\ldots+\nu_j))!} \cdot \\
&(L+1]_{L-M+(\nu_1+\ldots+\nu_{j-1}+1)} \cdots (L+\nu_j]_{L-M+(\nu_1+\ldots+\nu_{j-1}+1)} \cdot (\nu_j -1)! (\nu_j -2)! \cdots 1! \neq 0\\
\end{split}
\end{equation*}
for all $j=1, \ldots, m$. Hence also $G = \prod_{j=1}^m g_j \neq 0$.
\end{proof}

\subsection{Twin type II Pad\'e approximants}

Bombieri and Vaaler's theorem \ref{Bombieri-Vaaler} made us look for a big common factor from the $M \times M$ minors of the matrix $\mathcal{V}(\overline{a}) \in \mathcal{M}_{M \times (L+1)}(\Z)$.
This common factor is a result from the general polynomial common factor \eqref{polynomialfactor} of the $M \times M$ minors of the matrix $\mathcal{V}(\overline{\alpha})$ proved in Theorem \ref{polynomialfactortheorem}.
As it happens, the explicit solution to the Pad\'e approximation problem \eqref{TwinHermite2} requires calculating those same minors of $\mathcal{V}(\overline{\alpha})$ in the special case $M=L$. 
(This has been explained in Section \ref{Cramersrule}, where the procedure was successfully carried out in the 'tame' case.)

So, as a special case ($\nu_i=l_i$, $M=L$) of Theorem \ref{polynomialfactortheorem} and Lemma \ref{ranklemma2} we get Theorem \ref{UnKnown}, a partial solution to the twin problem \eqref{TwinHermite2}.

\begin{proof}[Proof of Theorem \ref{UnKnown}]
Recall again that by Cramer's rule, equation \eqref{unknowneq} has a solution
\begin{equation*}
[c_0, c_1, \ldots, c_L] = 
\left[ \mathcal{V}[0] : - \mathcal{V}[1] : \ldots : (-1)^L \mathcal{V}[L] \right],
\end{equation*}
where one of the $L \times L$ minors $\mathcal{V}[i] \in \Z[\alpha_1, \ldots, \alpha_m]$ is a non-zero polynomial by Lemma \ref{ranklemma2}. 
Because of the homogeneous coordinates, we may divide by the common factor 
$T\!\left(\overline{l},\overline{\alpha}\right) = \alpha_1^{\binom{l_1}{2}} \cdots \alpha_m^{\binom{l_m}{2}} \prod_{1 \le i < j \le m} (\alpha_i - \alpha_j)^{\min \left\{ l_i^2,\, l_j^2 \right\}}$
(Theorem \ref{polynomialfactortheorem} with $\nu_i=l_i$), so that
$$
c_i = \frac{(-1)^i\mathcal{V}[i]}{T\!\left(\overline{l},\overline{\alpha}\right)} =: \tau_{i}(\overline{l},\overline{\alpha}), \quad i=0,1,\ldots,L.
$$
Now \eqref{Bnu0-2} gives
$$
B_{\overline{l},0}(t,\overline{\alpha}) = \sum_{i=0}^L \frac{L!}{i!} \tau_{i}\!\left(\overline{l},\overline{\alpha}\right) t^i \in \Z[t,\overline{\alpha}].
$$
\end{proof}

The proof of Theorem \ref{UnKnown} could be carried out in the same way as the proof of Theorem \ref{Known}, 
leaving out the term $\frac{L!}{h!}$ in \eqref{Bnu0-2}. This would result in a slightly different matrix, 
but essentially the same common factor. This is due to the connection between the minors of these matrices, 
apparent in \eqref{connection1}. Since our starting point was the Bombieri-Vaaler version of Siegel's lemma,
we had to manipulate the equations so that the coefficient matrix has integer elements. 
This technicality only is the reason to the presence of $\frac{L!}{i!}$ in Theorem \ref{UnKnown}.

Near the diagonal the common factor $T\!\left(\overline{l},\overline{\alpha}\right)$ is big, implying that the coefficients
$\tau_{i}\!\left(\overline{l},\overline{\alpha}\right)$ are relatively small. 
However, the next example of the case $m=2$, $l_0=0$, $l_1=1$, $l_2=3$ in Table \ref{table1} already shows the big difference between the Pad\'e polynomials 
$A_{\overline{l},0}(t,\overline{\alpha})=\sum_{i=0}^L b_i t^i = \sum_{h=0}^L (L_0-h)! \sigma_{L_0-h}\! \left(\overline l,\overline{\alpha}\right) t^h$ and  
$B_{\overline{l},0}(t,\overline{\alpha}) = \sum_{i=0}^L \frac{L!}{i!} \tau_{i}\!\left(\overline{l},\overline{\alpha}\right) t^i$, 
illustrating our decision to call the latter case 'wild'.

\begin{table}[h]
\caption{Case $m=2$, $l_0=0$, $l_1=1$, $l_2=3$.}
\label{table1}
\centering
{\renewcommand{\arraystretch}{1.2}
\begin{tabular}{|l|l|l|}
\hline
$i$ & $b_i$ & $\frac{L!}{i!} \tau_{i}\!\left(\overline{l},\overline{\alpha}\right)$ \\
\hline
0 &  24                               & $24 \cdot 35(5\alpha_1^2-5\alpha_1\alpha_2+\alpha_2^2)$ \\
1 & $-6(\alpha_1+3\alpha_2)$          & $-24 \cdot 5(7\alpha_1^3+7\alpha_1^2\alpha_2-13\alpha_1\alpha_2^2+3\alpha_2^3)$ \\
2 & $6\alpha_2(\alpha_1+\alpha_2)$    & $12\cdot 5\alpha_2(7\alpha_1^3-3\alpha_1^2\alpha_2-3\alpha_1\alpha_2^2+\alpha_2^3)$ \\
3 & $-\alpha_2^2(3\alpha_1+\alpha_2)$ & $-4\alpha_2^2(3\alpha_1-\alpha_2)(7\alpha_1^2-4\alpha_1\alpha_2-\alpha_2^2)$\\
4 & $\alpha_1 \alpha_2^3$             & $\alpha_1\alpha_2^3(7\alpha_1^2-8\alpha_1\alpha_2+2\alpha_2^2)$ \\
\hline
\end{tabular}}
\end{table}

\appendix
\section{Some examples of minors of the matrix $\mathcal{V}(\overline{\alpha})$}\label{minorappendix}

Tables \ref{table2} and \ref{table3} show some computer calculated examples in the case $\nu_i=l_i$, $M=L$ with small values of $m$ and $l_i$. Here the gcd contains an integer part in addition to the primitive polynomial gcd earlier denoted by $\widehat{D}(\overline{\alpha})$.

The gcd of the $L \times L$ minors is presented in the second column, and the minors divided by the gcd are listed in the third column. 
The order of the minors is $\mathcal{V}[0], \mathcal{V}[1], \ldots, \mathcal{V}[L]$.

Factorisation was performed with \emph{Mathematica}.

\begin{table}[h]
\caption{Some cases with $m=2$.}
\label{table2}
\centering
{\renewcommand{\arraystretch}{1.2}
\begin{tabular}{|l|c|l|}
\hline
Case & GCD & $\mathcal{V}[i] / \mathrm{GCD}$, $i=0,1,\ldots,L$ \\
\hline
$m=2,$ & $3(\alpha_1-\alpha_2)$ & $6$, \\
$L=2,$ & & $2(\alpha_1+\alpha_2)$,\\
$l_1 = l_2 = 1$ & & $\alpha_1 \alpha_2$\\
\hline
$m=2,$ & $2\alpha_2(\alpha_1-\alpha_2)$ & $20 (2\alpha_1-\alpha_2)$,\\
$L=3,$ & & $2(5\alpha_1^2+5\alpha_1\alpha_2-4\alpha_2^2),$ \\
$l_1 = 1$, $l_2 = 2$ & & $2\alpha_2(5\alpha_1^2-\alpha_1\alpha_2-{\alpha_2}^2)$, \\
& & $\alpha_1\alpha_2^2(5\alpha_1-3\alpha_2)$ \\
\hline
$m=2,$ & $25\alpha_1\alpha_2 (\alpha_1-\alpha_2)^4$ & $30$, \\
$L=4,$ & & $10(\alpha_1+\alpha_2)$, \\
$l_1 = l_2 = 2$ & & $2(\alpha_1^2+4\alpha_1\alpha_2+\alpha_2^2)$, \\
& & $3\alpha_1\alpha_2(\alpha_1+\alpha_2)$, \\
& & $2\alpha_1^2 \alpha_2^2$ \\
\hline
$m=2,$ & $5\alpha_2^3 (\alpha_1-\alpha_2)$ & $35(5\alpha_1^2-5\alpha_1\alpha_2+\alpha_2^2)$,\\
$L=4,$ & & $5(7\alpha_1^3+7\alpha_1^2\alpha_2-13\alpha_1\alpha_2^2+3\alpha_2^3)$, \\
$l_1 = 1$, $l_2 = 3$ & & $5\alpha_2(7\alpha_1^3-3\alpha_1^2\alpha_2-3\alpha_1\alpha_2^2+\alpha_2^3)$, \\
& & $\alpha_2^2(3\alpha_1-\alpha_2)(7\alpha_1^2-4\alpha_1\alpha_2-\alpha_2^2)$, \\
& & $\alpha_1\alpha_2^3(7\alpha_1^2-8\alpha_1\alpha_2+2\alpha_2^2)$ \\
\hline
$m=2,$ &  $3 \alpha_2^6 (\alpha_1-\alpha_2)$ & $252 (2\alpha_1 - \alpha_2) (7 \alpha_1^2 - 7\alpha_1\alpha_2 + \alpha_2^2)$, \\
$L=5,$ & & $14 (42 \alpha_1^4 + 42 \alpha_1^3 \alpha_2 - 133 \alpha_1^2 \alpha_2^2 + 67 \alpha_1 \alpha_2^3 - 8 \alpha_2^4)$, \\
$l_1 = 1$, & & $14 \alpha_2 (42 \alpha_1^4 - 28 \alpha_1^3 \alpha_2 + 22 \alpha_1 \alpha_2^3 - 3 \alpha_2^4)$, \\
$l_2 = 4$ & & $3 \alpha_2^2 (126 \alpha_1^4 - 154 \alpha_1^3 \alpha_2 + 21 \alpha_1^2 \alpha_2^2 + 21 \alpha_1 \alpha_2^3 - 4 \alpha_2^4)$, \\
& & $2 \alpha_2^3 (84 \alpha_1^4 - 126 \alpha_1^3 \alpha_2 + 49 \alpha_1^2 \alpha_2^2 - \alpha_1 \alpha_2^3 - \alpha_2^4)$, \\
& & $\alpha_1\alpha_2^4 (42 \alpha_1^3 -70 \alpha_1^2 \alpha_2 + 35 \alpha_1 \alpha_2^2 - 5 \alpha_2^3)$ \\
\hline
$m=2,$ & $7 \alpha_1 \alpha_2^3 (\alpha_1-\alpha_2)^4$ & $420 (7 \alpha_1^2 -7\alpha_1\alpha_2 + 2 \alpha_2^2)$, \\
$L=5,$ & & $105 (8 \alpha_1^3 + 2 \alpha_1^2 \alpha_2 - 8 \alpha_1 \alpha_2^2 + 3 \alpha_2^3)$, \\
$l_1 = 2$, & & $10 (14 \alpha_1^4 + 56 \alpha_1^3 \alpha_2 - 40 \alpha_1^2 \alpha_2^2 - 9 \alpha_1 \alpha_2^3 + 9 \alpha_2^4)$, \\
$l_2 = 3$ & & $15\alpha_2 (14 \alpha_1^4 + 8 \alpha_1^3 \alpha_2 - 17 \alpha_1^2 \alpha_2^2 + 4 \alpha_1 \alpha_2^3 + \alpha_2^4)$, \\
& & $6 \alpha_1 \alpha_2^2 (28 \alpha_1^3 - 16 \alpha_1^2 \alpha_2 - 7 \alpha_1 \alpha_2^2 + 5 \alpha_2^3)$, \\
& & $5 \alpha_1^2 \alpha_2^3 (14 \alpha_1^2 - 16\alpha_1\alpha_2 + 5 \alpha_2^2)$ \\
\hline
$m=2,$ & $4116 \alpha_1^3 \alpha_2^3 (\alpha_1-\alpha_2)^9$ & $84$, \\
$L=6,$ & & $28(\alpha_1+\alpha_2)$, \\
$l_1 = l_2 = 3$ & & $7 (\alpha_1^2 + 3\alpha_1\alpha_2 + \alpha_2^2)$, \\
& & $(\alpha_1+\alpha_2) (\alpha_1^2 + 8\alpha_1\alpha_2 + \alpha_2^2)$, \\
& & $2 \alpha_1\alpha_2 (\alpha_1^2 + 3\alpha_1\alpha_2 + \alpha_2^2)$, \\
& & $2 \alpha_1^2 \alpha_2^2 (\alpha_1+\alpha_2)$, \\
& & $\alpha_1^3 \alpha_2^3$ \\
\hline
\end{tabular}}
\end{table}

\begin{table}[h]
\caption{Some cases with $m=3$.}
\label{table3}
\centering
{\renewcommand{\arraystretch}{1.2}
\begin{tabular}{|l|c|l|}
\hline
Case & GCD & $\mathcal{V}[i] / \mathrm{GCD}$, $i=0,1,\ldots,L$ \\
\hline
$m=3,$ & $8 (\alpha_1-\alpha_2) \cdot$ & $12$, \\
$L=3$, & $(\alpha_1-\alpha_3) (\alpha_2-\alpha_3)$ & $3(\alpha_1+\alpha_2+\alpha_3)$, \\
$l_1 = l_2 =$ & & $2(\alpha_1\alpha_2+\alpha_1\alpha_3+\alpha_2\alpha_3)$, \\
$ l_3 = 1$& & $3\alpha_1\alpha_2\alpha_3$ \\
\hline
$m=3$, & $25\alpha_3 (\alpha_1-\alpha_2) \cdot$ & $10 (10\alpha_1\alpha_2 - 5\alpha_1\alpha_3 - 5\alpha_2\alpha_3 + 3\alpha_3^2)$, \\
$l_1 = l_2 = 1$, & $(\alpha_1-\alpha_3)(\alpha_2-\alpha_3)$ & $10 (2 \alpha_1^2 \alpha_2 + 2\alpha_1\alpha_2^2 - \alpha_1^2 \alpha_3 +\alpha_1\alpha_2\alpha_3$ \\
$l_3 = 2$ & & $ - \alpha_2^2 \alpha_3 - \alpha_1\alpha_3^2 - \alpha_2\alpha_3^2 + \alpha_3^3)$,\\
$L=4$, & & $2 (5 \alpha_1^2 \alpha_2^2 + 5 \alpha_1^2 \alpha_2\alpha_3 + 5 \alpha_1\alpha_2^2 \alpha_3 - 4 \alpha_1^2 \alpha_3^2$ \\
& & $- 4 \alpha_1\alpha_2 \alpha_3^2 - 4 \alpha_2^2 \alpha_3^2 + \alpha_1\alpha_3^3 + \alpha_2\alpha_3^3 + \alpha_3^4)$, \\
& & $\alpha_3 (15 \alpha_1^2 \alpha_2^2 - 3 \alpha_1^2 \alpha_2\alpha_3 - 3\alpha_1 \alpha_2^2 \alpha_3 - 3 \alpha_1^2 \alpha_3^2$ \\
& & $- \alpha_1\alpha_2\alpha_3^2 - 3\alpha_2^2 \alpha_3^2 + 2\alpha_1\alpha_3^3 + 2\alpha_2\alpha_3^3)$, \\
& & $2\alpha_1\alpha_2\alpha_3^2 (5\alpha_1\alpha_2 - 3\alpha_1\alpha_3 - 3\alpha_2\alpha_3 + 2\alpha_3^2)$ \\
\hline
$m=3$, & $19208 \alpha_1\alpha_2\alpha_3 \cdot$ & $\ldots$ \\
$L=6$, & $(\alpha_1-\alpha_2)^4 (\alpha_1-\alpha_3)^4 \cdot$ & \\
$l_1 = l_2 =$ & $(\alpha_2-\alpha_3)^4$ & \\
$l_3 = 2$ & & \\
\hline
\end{tabular}}
\end{table}

\clearpage

\section*{Acknowledgements}

The authors are indebted to the anonymous referee whose comments helped to improve the presentation of this article.

\section*{Funding}

The work of Louna Sepp\"al\"a was supported by the Magnus Ehrnrooth Foundation.

\end{document}